\theoremstyle{plain}
\newtheorem{thm}{Theorem}[section] 
\newtheorem{defn}[thm]{Definition} 
\newtheorem{lem}[thm]{Lemma} 
\newtheorem{pro}[thm]{Proposition} 
\newtheorem{rem}[thm]{Remark}
\begin{document}
\begin{center}
\section*{Some results on the Signature and Cubature of the Fractional Brownian motion for $H>\frac{1}{2}$}
\subsection*{Riccardo Passeggeri\footnote[1]{Imperial College London, UK. Email: riccardo.passeggeri14@imperial.ac.uk}}
\end{center}
\begin{abstract}
In this work we present different results concerning the signature and the cubature of fractional Brownian motion (fBm). The first result regards the rate of convergence of the expected signature of the linear piecewise approximation of the fBm to its exact value, for a value of the Hurst parameter $H\in(\frac{1}{2},1)$. We show that the rate of convergence is given by $2H$. We believe that this rate is sharp as it is consistent with the result of Ni and Xu \cite{NiXu}, who showed that the sharp rate of convergence for the Brownian motion (\textit{i.e.} fBm with $H=\frac{1}{2}$) is given by $1$. The second result regards the bound of the \textit{coefficient} of the rate of convergence obtained in the first result. We obtain an uniform bound for the coefficient for the $2k$-th term of the signature of $\frac{\tilde{A}k(2k-1)}{(k-1)!2^{k}}$, where $\tilde{A}$ is a finite constant independent of $k$. The third result regards the sharp decay rate of the expected signature of the fBm. We obtain a sharp bound for the $2k$-th term of the expected signature of $\frac{1}{k!2^{k}}$. The last results concern the cubature method for the fBm for $H>\frac{1}{2}$. In particular, we develop the framework of the cubature method for fBm, provide a bound for the approximation error in the general case, and obtain the cubature formula for the fBm in a particular setting. These results extend the work of Lyons and Victoir \cite{LV}, who focused on the Brownian motion case.
\\
\\
\textbf{Key words:} fractional Brownian motion, signature, rate of convergence, sharp decay rate, cubature method.
\end{abstract}

\section{Introduction}
The signature of a $d$-dimensional fractional Brownian motion (fBm) is a sequence of iterated Stratonovich integrals along the paths of the fBm; it is an object taking values in the tensor algebra over $\mathbb{R}^{d}$. 
\\
Signatures were firstly studied by K.T.-Chen in 1950’s in a series of papers \cite{Chen1}, \cite{Chen} and \cite{Chen3}. In the last twenty years the attention devoted to signatures has increased rapidly. This has been caused by the pivotal role they have in rough path theory, a field developed in the late nineties by Terry Lyons culminating in the paper \cite{L}, which is also at the base of the newly developed theory of regularity structures \cite{Hairer}. The signature of a path summarises the essential properties of that path allowing the possibility to study SPDEs driven by that path. 
\\
We remark also the increasing importance of the signatures in the field of machine learning, see for example \cite{machine} among many. In light of this increasing importance and of the fact that rate of convergences are crucial to evaluate algorithmic efficiency, our results may have a \textit{direct} impact to real world applications.

In 2015 Hao Ni and Weijun Xu \cite{NiXu} computed the sharp rate of convergence for expected Brownian signatures. They obtained a rate of convergence of $1$; in formulas,
\begin{equation*}
\Bigg|\mathbb{E}\left(\int_{\Delta^{2k}[0,1]}dB^{I}\right)-\mathbb{E}\left(\int_{\Delta^{2k}[0,1]}dB^{m,I}\right)\Bigg|\leq Cm^{-1},
\end{equation*}
where $C>0$, $\mathbb{E}\left(\int_{\Delta^{2k}[0,1]}dB^{I}\right)$ is the (truncated) expected signature of the Brownian motion and $\mathbb{E}\left(\int_{\Delta^{2k}[0,1]}dB^{m,I}\right)$ is the (truncated) expected signature of the linear piecewise approximation of the Brownian motion with mesh size $\frac{1}{m}$. A more formal introduction is given in the next section.
\\
On the other hand, for the fBm no progress has been made in this direction. In particular, the rate of convergence for the expected signature of the fBm is not known for any value of the Hurst parameter $H\in(0,1)$. The first result of this article address this problem, obtaining the  ``sharp" rate of convergence for $H\in(\frac{1}{2},1)$. In formulas,
\begin{equation}\label{prima}
\Bigg|\mathbb{E}\left(\int_{\Delta^{2k}[0,1]}dB^{H,I}\right)-\mathbb{E}\left(\int_{\Delta^{2k}[0,1]}dB^{H,m,I}\right)\Bigg|\leq C'm^{-2H},
\end{equation}
where $C'>0$, $\mathbb{E}\left(\int_{\Delta^{2k}[0,1]}dB^{H,I}\right)$  and $\mathbb{E}\left(\int_{\Delta^{2k}[0,1]}dB^{H,m,I}\right)$ are respectively the (truncated) expected signature of the fBm and of its linear piecewise approximation with mesh size $\frac{1}{m}$. To achieve this result, we used the results of Baudoin and Coutin in \cite{BauCou}, who computed the expected signature for the fractional Brownian motion for $H>\frac{1}{2}$ and also for small times for $H\in(\frac{1}{3},\frac{1}{2})$. We mention also the works \cite{1/4} and \cite{LVextension}, where further properties of the signature of the fBm are studied.
\\
In this work we focus on the weak rate of convergence and we refer to the work of Friz and Riedel \cite{FR} for the strong rate of convergence. It is possible to derive from their work a rate of $H$ for $H>\frac{1}{2}$ (see also Deya, Neuenkirch and Tindel \cite{Deya} for similar results), while here we obtain a weak convergence rate of $2H$.

The second result of this work regards the bound of the coefficient of the first result, namely $C'$ in $(\ref{prima})$. We obtain an uniform bound for the coefficient for the $2k$-th term of the signature of
\begin{equation*}
\frac{\tilde{A}k(2k-1)}{(k-1)!2^{k}},
\end{equation*}
where $\tilde{A}$ is a finite constant independent of $k$. This shows that for a fixed linear piecewise approximation of the fBm as the number of iterated integrals increases the difference between the expected signature of the fBm and of its linear piecewise approximation goes to zero (and it goes fast).

For the third result we move away from the linear piecewise approximation and focus just on the expected signature. In \cite{CheLyo}, Chevyrev and Lyons showed that the expected signature has infinite radius of convergence, but not a factorial decay. In this work we show that the expected signature has a factorial decay, indeed the sharp bound for the $2k$-th term of the signature is simply given by $\frac{1}{k!2^{k}}$ for all $H\in(\frac{1}{2},1)$. In the $H > \frac{1}{2}$ case, our result gives an alternative proof, with sharper estimates, of the fact that the expected signature of the fractional Brownian motion has infinite radius of convergence, which by \cite{CheLyo} implies that the expected signature determines the signature in distribution. Our estimate is also sharper than the one obtained by Friz and Riedel \cite{FR2}, from which is possible to obtain a bound of $\frac{1}{(k/2)!}$, and Neuenkirch, Nourdin, Rössler and Tindel \cite{NNRT}, who showed a bound of $\frac{C^{k}}{\sqrt{(2k)!}}$ (with $C>2$ and dependent on $H$). In formulas, our result is:
\begin{equation*}
\mathbb{E}\left(\int_{\Delta^{2k}[s,t]}dB^{I}\right)\leq \dfrac{(t-s)^{2kH}}{k!2^{k}}.
\end{equation*}

In 2003 Lyons and Victoir \cite{LV} developed a numerical methods for approximating the solution of parabolic PDEs and general SDEs driven by Brownian motions called cubature method on Weiner space. In the cubature method the main first step is to obtain the cubature formula in which the truncated signature of a path (in the case of \cite{LV} is the Brownian motion) is matched to a finite weighted sum of Dirac delta measures applied to the iterated integrals of the deterministic paths. In this work, we develop the framework of the cubature method for fBm, obtain the cubature formula for the fBm in a particular setting, and provide a bound for the approximation error for the general case.

This paper is structured in the following way. In section 2 we introduce some notations and state formally the main results. In section 3 we will discuss about preliminaries involving definitions and the results of other authors. In section 4, 5 and 6 we prove the first three main results of the article. In section 7 we discuss the cubature method for the fBm.
\section{Main Results}
In this section we introduce the main results of this paper. But first, we introduce some notation, which is in line with the one used in the papers of Baudoin and Coutin $\cite{BauCou}$, Lyons $\cite{L}$, and Lyons and Victoir $\cite{LV}$ and in the book by Lyons, Caruana, and L\'{e}vy $\cite{LCL}$.

The fractional Brownian motion is defined as follows.
\begin{defn}
Let H be a constant belonging to $(0,1)$. A fractional Brownian motion (fBm) $(B^{H} (t))_{t≥0}$ of Hurst index H is a continuous and centered Gaussian process with covariance function
\begin{equation*}
\mathbb{E}\left[B^{H} (t)B^{H} (s)\right]=\frac{1}{2}(t^{2H}+s^{2H}-|t-s|^{2H}).
\end{equation*}
\end{defn}
\noindent From now on we will denote $B:=B^{H}$. \\For $H=\frac{1}{2}$ then the fBm is a Bm. Further, the multi-dimensional fBm has coordinate components that are independent and identically distributed copies of one dimensional fBm. Moreover, recall that the fBm has the following two properties:
\\
$(i)\enspace \text{(scaling) for any $c>0$,}\enspace c^{H}B_{\cdot/c}\enspace\text{is a fBm,}$
\\
$(ii)\enspace \text{(stationary increments) for any $h>0$,}\enspace B_{\cdot+h}-B_{h}\enspace\text{is a fBm.}$\\\\
Now, we define the simplex $\Delta^{k}[s,t]$, where $T\in[0,\infty)$ and $s,t\in[0,T]$,
\begin{equation*}
\Delta^{k}[s,t]:=\{(t_{1},...,t_{k})\in[s,t]^{k}:t_{1}<...<t_{k}\}.
\end{equation*}
Further, we define the following iterated integrals. Let $I=(i_{1}, . . . , i_{k})\in\{1,..,d\}^{k}$ be a word with length $k$ then
\begin{equation*}
\int_{\Delta^{k}[s,t]}dB^{I}:=\int_{s\leq t_{1}<...<t_{k}\leq t}dB^{i_{1}}_{t_{1}}\cdot\cdot\cdot dB^{i_{k}}_{t_{k}}
\end{equation*}
and
\begin{equation*}
\int_{\Delta^{k}[s,t]}dB^{m,I}:=\int_{s\leq t_{1}<...<t_{k}\leq t}dB^{m,i_{1}}_{t_{1}}\cdot\cdot\cdot dB^{m,i_{k}}_{t_{k}},
\end{equation*}
where $B$ is the fractional Brownian motion with Hurst parameter $H$ and $B^{m}$ is its linear piecewise approximation. In addition, $B^{i}$ is the $i$-th coordinate component of the fBm $B$ and the iterated integrals can be defined in the sense of Young \cite{Young}. 
\\Moreover, the linear piecewise approximation $B^{m}$ is defined as follows. The only requirement we need is that the linear approximation comes from a uniform grid. Let $s,u\in[0,\infty)$ and consider an interval of time $[s,u]$. Let $t_{i}^{[s,u]}:=\frac{i}{m}(u-s)+s$ for $i=0,...,m$. If $t\in[t_{i}^{[s,u]},t_{i+1}^{[s,u]}]$ then
\begin{equation*}
B^{m}_{[u,s],t}:=B^{m}_{t_{i}^{[s,u]}}+\frac{m}{u-s}(t-t_{i}^{[s,u]})(B^{m}_{t_{i+1}^{[s,u]}}-B^{m}_{t_{i}^{[s,u]}}).
\end{equation*}
Observe that the definition of the linear piecewise approximation depends on the interval considered. Notice also that $B_{t}^{m}$ satisfies the scaling and stationary increments property, once the interval is properly modified. Indeed, we have that for the stationary increments property the interval to be taken is, for any $h>0$, $[s+h,u+h]$ and we have
\begin{equation*}
B^{m}_{[s+h,u+h],t+h}=B^{m}_{t_{i}^{[s+h,u+h]}}+\frac{m}{u-s}(t+h-t_{i}^{[s+h,u+h]})(B^{m}_{t_{i+1}^{[s+h,u+h]}}-B^{m}_{t_{i}^{[s+h,u+h]}})
\end{equation*}
\begin{equation*}
\stackrel{Law}{=}B^{m}_{t_{i}^{[u,s]}}+\frac{m}{u-s}(t-t_{i}^{[u,s]})(B^{m}_{t_{i+1}^{[u,s]}}-B^{m}_{t_{i}^{[u,s]}})=B^{m}_{[u,s],t}
\end{equation*}
and for the scaling property the interval to be taken, for any $c>0$, is $[s/c,u/c]$ and we have
\begin{equation*}
c^{H}B^{m}_{[u/c,s/c],t/c}=c^{H}B^{m}_{t_{i}^{[s/c,u/c]}}+c^{H}\frac{cm}{u-s}(t/c-t_{i}^{[s/c,u/c]})(B^{m}_{t_{i+1}^{[s/c,u/c]}}-B^{m}_{t_{i}^{[s/c,u/c]}})
\end{equation*}
\begin{equation*}
=c^{H}B^{m}_{t_{i}^{[s,u]}/c}+c^{H}\frac{cm}{u-s}(t/c-t_{i}^{[s,u]}/c)(B^{m}_{t_{i+1}^{[s,u]}/c}-B^{m}_{t_{i}^{[s,u]}/c})
\end{equation*}
\begin{equation*}
\stackrel{Law}{=}B^{m}_{t_{i}^{[u,s]}}+\frac{m}{u-s}(t-t_{i}^{[u,s]})(B^{m}_{t_{i+1}^{[u,s]}}-B^{m}_{t_{i}^{[u,s]}})=B^{m}_{[u,s],t}.
\end{equation*}
Further, from now on we will not write explicitly the interval of the linear approximation, unless it is necessary in order to avoid confusion. Hence, we denote $t_{i}:=t_{i}^{[s,u]}$ and $B^{m}_{t}:=B^{m}_{[u,s],t}$. 

We can now present our main results. The first result is about the rate of convergence of the expected signature of the linear piecewise approximation of the fBm to its exact value.
\begin{thm}\label{pr1}
Let $H > \frac{1}{2}$, $T\in[0,\infty)$ and $0\leq s<t\leq T$. Let $I = (i_{1}, . . . , i_{2k})$ be a word where $i_{l}\in\{1,...,d\}$ for $l=1,...,2k$, then for all m
\begin{equation*}
\Bigg|\mathbb{E}\left(\int_{\Delta^{2k}[s,t]}dB^{I}\right)-\mathbb{E}\left(\int_{\Delta^{2k}[s,t]}dB^{m,I}\right)\Bigg|\leq Cm^{-2H}(t-s)^{2kH},
\end{equation*}
where C is a finite constant and depending only on $k$ and $H$.
\end{thm}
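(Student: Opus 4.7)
The plan is first to reduce to the unit interval via the scaling property verified in the excerpt for both $B$ and $B^m$: this factors out $(t-s)^{2kH}$ and reduces the claim to
\[
\Bigl|\mathbb{E}\int_{\Delta^{2k}[0,1]} dB^I-\mathbb{E}\int_{\Delta^{2k}[0,1]} dB^{m,I}\Bigr|\le C\,m^{-2H}.
\]
On $[0,1]$, I would invoke the Baudoin--Coutin formula, which for $H>\tfrac12$ expresses each expected signature as a sum over perfect matchings of $\{1,\ldots,2k\}$ compatible with the word $I$, of iterated integrals over the ordered simplex of the product of the fractional covariance kernel $\phi(u,v):=H(2H-1)|u-v|^{2H-2}$ along paired indices. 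The same representation applies to $B^m$ via Wick's theorem, since $\dot B^m$ is a centered Gaussian process piecewise constant on the uniform grid; only $\phi$ is replaced by its cell-averaged counterpart
\[
\phi^m(u,v):=m^2\int_{C(u)\times C(v)}\phi(u',v')\,du'\,dv',
\]
where $C(\cdot)$ denotes the grid cell containing its argument.

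Subtracting the two expansions and telescoping the product over each matching reduces the estimate, for each matching and each distinguished pair $\{p,q\}$, to controlling
\[
\int_{\Delta^{2k}[0,1]}(\phi-\phi^m)(t_p,t_q)\prod_{\{p',q'\}\ne\{p,q\}}K_{p'q'}(t_{p'},t_{q'})\,dt_1\cdots dt_{2k},
\]
with each frozen kernel $K_{p'q'}\in\{\phi,\phi^m\}$. To extract the desired $m^{-2H}$ factor, I would combine the cancellation $\int_{C_i\times C_j}(\phi-\phi^m)=0$ built into the definition of $\phi^m$ -- which on off-diagonal cells, via a Taylor expansion of the remaining integrand about cell centres, produces errors of higher order -- with a direct computation of the diagonal contribution, using $\int_{C_i\times C_i}\phi\sim m^{-2H}$, to handle the neighbourhood of the singular set $\{u=v\}$. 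A uniform simplex bound for the frozen pairs, $\int_{\Delta^{2k-2}[0,1]}\prod_j K_j\le C(k,H)$, holds uniformly in $m$ because $\phi^m$ and $\phi$ share the same integral over any cell; summing over the at most $(2k-1)!!$ matchings and $k$ positions of the distinguished pair collects the combinatorial factors into $C(k,H)$.

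The principal obstacle is the interplay between the diagonal singularity of $\phi$ and the ordered-simplex constraint. On cells abutting $\{u=v\}$ the Taylor cancellation is unavailable and one must rely on the explicit diagonal scaling $m^{-2H}$; off-diagonal contributions, although individually smaller, must be summed over $O(m^2)$ cells and therefore still require the full cancellation to arrive at the correct rate rather than some $m^{-2H+\epsilon}$. In addition, because $\phi^m$ is merely a step function across cell boundaries that can be as large as $m^{2-2H}$ pointwise on diagonal cells, confirming that the frozen pairs admit a uniform simplex bound requires careful bookkeeping of how the matching structure interacts with the ordering $t_1<\cdots<t_{2k}$. Once these points are handled, the rate $2H$ follows from the diagonal local estimate and the scaling reduction supplies the $(t-s)^{2kH}$ prefactor.
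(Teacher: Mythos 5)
Your proposal follows essentially the same route as the paper: reduce to $[0,1]$ by scaling, apply the Baudoin--Coutin/Wick representation to both $B$ and $B^m$ (with the cell-averaged kernel), telescope the product over pairings, and for the distinguished pair exploit exact cancellation on full grid cells, a mean-value/Taylor estimate on the off-diagonal partial cells (summable via $\sum_i i^{2H-3}<\infty$), and a direct $m^{-2H}$ computation on the diagonal cells, finishing with the uniform hypercube bound $H^{-(k-1)}(2H-1)^{-(k-1)}$ for the frozen pairs. This matches the paper's proof in all essential respects, so no further comparison is needed.
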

\noindent It is important to stress that for the Brownian motion case, namely $H=\frac{1}{2}$, Ni and Xu in \cite{NiXu} proved that the sharp rate of convergence is given by $1$ (\textit{i.e.} a uniform bound of $Cm^{-1}$), which is in line with the result presented here. Moreover, the proof used in \cite{NiXu} cannot in principle be used to prove our result since Ni and Xu used the independence of the increments property (\textit{i.e.} the Markov semigroup property) of the Brownian motion, which does not hold for the fBm. Conversely, the proof used to prove Theorem 2.2 is based on the integral form of the covariance function of the fBm which is valid only for $H > \frac{1}{2}$, hence our proof cannot in principle be used to prove the result in \cite{NiXu}.

In the next theorem we focus on and refine the value of the coefficient $C$ of the previous theorem and we provide a bound for it.
\begin{thm}\label{pr2}
Let $H > \frac{1}{2}$, $T\in[0,\infty)$ and $0\leq s<t\leq T$. Let $I = (i_{1}, . . . , i_{2k})$ be a word where $i_{l}\in\{1,...,d\}$ for $l=1,...,2k$, then
\begin{equation*}
\limsup\limits_{m\rightarrow\infty}m^{2H}\Bigg|\mathbb{E}\left(\int_{\Delta^{2k}[s,t]}dB^{I}\right)-\mathbb{E}\left(\int_{\Delta^{2k}[s,t]}dB^{m,I}\right)\Bigg|\leq \dfrac{\tilde{A}k(2k-1)}{(k-1)!2^{k}}(t-s)^{2kH},
\end{equation*}
where $\tilde{A}$ is a finite constant depending only on $H$.
\end{thm}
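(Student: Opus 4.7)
\medskip
\noindent\textit{Proof plan.} The plan is to revisit the proof of Theorem~\ref{pr1} and carefully track the $k$-dependence of the constant, then pass to $\limsup$ to isolate the leading coefficient. Since $B$ and $B^{m}$ are both centred Gaussian, Wick's theorem combined with the Baudoin--Coutin integral representation for $H>\tfrac{1}{2}$ yields
\begin{equation*}
\mathbb{E}\Bigl(\int_{\Delta^{2k}[s,t]} dB^{I}\Bigr) \;=\; \sum_{\pi}\int_{\Delta^{2k}[s,t]}\prod_{(p,q)\in\pi} dR(u_{p},u_{q}),
\end{equation*}
where the sum runs over pairings $\pi$ of $\{1,\ldots,2k\}$ compatible with $I$ (so that $i_{p}=i_{q}$ on each pair) and $dR(u,v)=H(2H-1)|u-v|^{2H-2}du\,dv$; the analogous expansion holds for $B^{m}$ with the piecewise-linear covariance differential $dR^{m}$. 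The difference of expected signatures then becomes a sum over pairings of a difference of $k$-fold products of covariance differentials, integrated over the simplex.

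Next I would telescope this $k$-fold product via $\prod_{j}a_{j}-\prod_{j}b_{j}=\sum_{j}(a_{j}-b_{j})\prod_{i<j}a_{i}\prod_{i>j}b_{i}$, reducing the problem to bounding a single ``distinguished pair'' factor $dR-dR^{m}$ multiplied by $k-1$ untouched covariance factors, summed over pairings $\pi$ and over the $k$ positions of the distinguished pair. For the distinguished slot I would then establish a sharp one-pair asymptotic of the form
\begin{equation*}
\int_{[s,t]^{2}}\bigl(dR-dR^{m}\bigr)(u,v)\,\varphi(u,v) \;=\; \tilde{A}\,m^{-2H}\int_{s}^{t}\varphi(u,u)\,du \;+\; o(m^{-2H}),
\end{equation*}
uniformly in the smooth test functions $\varphi$ arising from the remaining covariance factors, with $\tilde{A}=\tilde{A}(H)$ obtained by a local rescaling $(u-v)\mapsto m(u-v)$ of the singular kernel $|u-v|^{2H-2}$ against its piecewise-linear replacement on a unit cell of the $1/m$-mesh.

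Combining this one-pair estimate with the sharp per-pairing bound for the remaining $k-1$ covariance factors (the same estimate that yields Theorem~2.4, giving $(t-s)^{(2k-2)H}/(2k-2)!$ per surviving pairing) and summing over the $k(2k-1)(2k-3)!!$ many (distinguished pair, surviving pairing) configurations, the identity $(2k-2)!=2^{k-1}(k-1)!\,(2k-3)!!$ converts the combinatorial count into the stated coefficient $\tilde{A}\,k(2k-1)/[(k-1)!\,2^{k}]$, after absorbing the numerical constant from the one-pair asymptotic into $\tilde{A}$. The main obstacle is precisely the sharp one-pair asymptotic: because $|u-v|^{2H-2}$ is singular on the diagonal, extracting the correct leading coefficient uniformly enough that $\limsup m^{2H}$ commutes with the (finite but large) sum over pairings requires a careful rescaled Riemann-sum analysis of $R-R^{m}$ near the diagonal of the $1/m$ mesh; everything else is combinatorial bookkeeping and an appeal to dominated convergence.
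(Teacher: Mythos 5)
Your overall architecture (Wick pairing expansion, telescoping the $k$-fold product of covariance differentials to isolate one distinguished pair, bounding the remaining $k-1$ factors by the Theorem~\ref{pr3}-type estimate $1/(2k-2)!$ per surviving pairing, and counting the $k(2k-1)(2k-3)!!$ configurations) is exactly the skeleton of the paper's argument; the count is the content of Proposition~\ref{newprop-permutation} combined with the symmetrization $\int_{\Delta^{2k-2}[0,1]}=\frac{1}{(2k-2)!}\int_{[0,1]^{2k-2}}$. The genuine gap is in the ingredient you single out as the main obstacle: the ``sharp one-pair asymptotic''
\begin{equation*}
\int_{[s,t]^{2}}\bigl(dR-dR^{m}\bigr)\varphi \;=\; \tilde{A}\,m^{-2H}\int_{s}^{t}\varphi(u,u)\,du+o(m^{-2H}).
\end{equation*}
This is both stronger than what the theorem requires and almost certainly false in the form you state. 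First, the $O(m^{-2H})$ discrepancy between $dR$ and $dR^{m}$ is \emph{not} concentrated on the diagonal: on every full grid cell, off-diagonal or diagonal, the difference integrates to exactly zero (this is shown inside the proof of Proposition~\ref{propositionintegrals}); the entire contribution comes from the boundary cells where the integration limits $u,v$ fail to coincide with grid points. So the putative leading term should not be a diagonal integral $\int\varphi(u,u)\,du$. Second, that boundary contribution depends on the fractional parts of $mu$ and $mv$ and therefore oscillates rather than converging to a fixed multiple of $m^{-2H}$; this is precisely the obstruction the paper discusses when explaining why it cannot establish a matching lower bound, i.e.\ genuine sharpness. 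A rescaled Riemann-sum analysis near the diagonal will not produce a coefficient with an $o(m^{-2H})$ remainder uniformly over the endpoints.

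Fortunately the theorem only asserts an upper bound on the $\limsup$, so no asymptotic is needed: the uniform bounds $\big|\int\int(dR-dR^{m})\big|\le Am^{-2H}$ (triangles, Proposition~\ref{propositionintegrals}) and $\le 4Am^{-2H}$ (rectangles, Proposition~\ref{newprop}) already established in the proof of Theorem~\ref{pr1} suffice, because in the telescoped sum the surviving covariance factors do not depend on the two distinguished integration variables, so the distinguished double integral factors out with constant ``test function''. With that substitution --- uniform bound in place of sharp asymptotic, the constant $\tilde{A}$ then being $8AH(2H-1)$ rather than a limit coefficient --- plus the dominated-convergence step you mention (needed to replace the piecewise factors $dR^{m}$ in the untouched slots by $dR$ as $m\to\infty$ before invoking the $1/(2k-2)!$ bound), your plan closes and coincides with the paper's proof.
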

The following theorem provides a sharp bound for the value for the expected signature of the fBm. In the $H > \frac{1}{2}$ case, this result in particular implies Chevyrev-Lyons'result that the expected signature of fBm has infinite radius of convergence. This in turns implies that the expected signature determines the signature in distribution.
\begin{thm}\label{pr3}
Let $H > \frac{1}{2}$, $T\in[0,\infty)$ and $0\leq s<t\leq T$. Let $I = (i_{1}, . . . , i_{2k})$ be a word where $i_{l}\in\{1,...,d\}$ for $l=1,...,2k$, then
\begin{equation*}
\mathbb{E}\left(\int_{\Delta^{2k}[s,t]}dB^{I}\right)\leq \dfrac{(t-s)^{2kH}}{k!2^{k}}.
\end{equation*}
\end{thm}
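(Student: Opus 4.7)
The plan is to express the expectation as a nonnegative sum of deterministic simplex integrals indexed by pairings, dominate it by the sum corresponding to the constant word $I=(1,\ldots,1)$, and then evaluate that case in closed form. Start by applying Isserlis' theorem to the centred Gaussian vector $(B^{i_1}_{t_1},\ldots,B^{i_{2k}}_{t_{2k}})$:
\begin{equation*}
\mathbb{E}\bigl[B^{i_1}_{t_1}\cdots B^{i_{2k}}_{t_{2k}}\bigr] \;=\; \sum_{\pi\in\mathcal{P}_{2}(2k)} \prod_{\{a,b\}\in\pi}\delta_{i_a,i_b}\,R(t_a,t_b),
\end{equation*}
where $\mathcal{P}_{2}(2k)$ is the set of pairings of $\{1,\ldots,2k\}$ and $R(u,v)=\tfrac{1}{2}(u^{2H}+v^{2H}-|u-v|^{2H})$. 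For $H>\tfrac{1}{2}$ the mixed partial $\partial_u\partial_v R(u,v)=H(2H-1)|u-v|^{2H-2}$ is nonnegative and locally integrable, so invoking the Baudoin--Coutin framework \cite{BauCou} to interchange expectation and iterated Young integration yields
\begin{equation*}
\mathbb{E}\!\left(\int_{\Delta^{2k}[s,t]} dB^{I}\right) \;=\; \sum_{\pi\in\mathcal{P}_{2}(2k)}\left(\prod_{\{a,b\}\in\pi}\delta_{i_a,i_b}\right)\,I_{\pi}(s,t),
\end{equation*}
with $I_{\pi}(s,t):=\int_{\Delta^{2k}[s,t]}\prod_{\{a,b\}\in\pi}H(2H-1)|t_a-t_b|^{2H-2}\,dt_1\cdots dt_{2k}\ge 0$.

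Every summand above is nonnegative and every Kronecker factor lies in $\{0,1\}$, so dropping the $\delta$'s can only enlarge the sum:
\begin{equation*}
\mathbb{E}\!\left(\int_{\Delta^{2k}[s,t]} dB^{I}\right) \;\le\; \sum_{\pi\in\mathcal{P}_{2}(2k)} I_{\pi}(s,t) \;=\; \mathbb{E}\!\left(\int_{\Delta^{2k}[s,t]} (dB^{1})^{\otimes 2k}\right).
\end{equation*}
This reduces the theorem to computing the expected signature coefficient for the constant word $I=(1,\ldots,1)$.

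For a one-dimensional path the shuffle identity yields
\begin{equation*}
\int_{\Delta^{2k}[s,t]}(dB^{1})^{\otimes 2k} \;=\; \frac{(B^{1}_{t}-B^{1}_{s})^{2k}}{(2k)!},
\end{equation*}
and since $B^{1}_{t}-B^{1}_{s}\sim\mathcal{N}(0,(t-s)^{2H})$ the Gaussian moment formula gives $\mathbb{E}[(B^{1}_{t}-B^{1}_{s})^{2k}]=(2k-1)!!\,(t-s)^{2kH}$; using $(2k)!=2^{k}k!\,(2k-1)!!$ this produces exactly $\frac{(t-s)^{2kH}}{k!\,2^{k}}$, which is the stated bound. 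Moreover, this bound is attained by any constant word, confirming sharpness. The main delicate step is the very first one: legitimating the termwise Wick expansion under Young integration. This is precisely where the hypothesis $H>\tfrac{1}{2}$ is used, since it ensures that the singular kernels $|t_a-t_b|^{2H-2}$ are absolutely integrable on compact simplices, allowing the Fubini interchange on which the whole argument rests.
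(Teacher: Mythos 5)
Your proof is correct and takes essentially the same route as the paper: both rest on the Wick/Isserlis expansion, the nonnegativity of each pairing contribution when $H>\tfrac{1}{2}$ (so that the Kronecker deltas can be discarded), and the resulting reduction to the constant word $I=(1,\ldots,1)$. The only difference is in the endgame — you evaluate the all-ones case via the shuffle identity $\int_{\Delta^{2k}[s,t]}(dB^{1})^{\otimes 2k}=(B^{1}_{t}-B^{1}_{s})^{2k}/(2k)!$ together with the Gaussian moment formula, while the paper symmetrizes the simplex integral over the cube $[0,1]^{2k}$ and factors it into $k$ copies of $\int_{0}^{1}\int_{0}^{1}|t-s|^{2H-2}\,ds\,dt=1/(H(2H-1))$ — and both computations yield the same value $(t-s)^{2kH}/(k!\,2^{k})$.
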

We move now to the cubature method. The results we present require an extended formal introduction, which is given in Section 7. The first main result regards the bound of the approximation error of the cubature method, namely the bound of the difference between the expected exact solution of an SDE driven by a fBm, which we denote by $\mathbb{E}\left(f(\xi_{T,x})\right)$, and the deterministic solution obtained by the cubature method, denoted by $\sum_{j=1}^{n}\lambda_{j}f(\Phi_{T,x}(\omega_{j}))$. 
In particular, with this result we extend Proposition 2.1, Lemma 3.1 and Proposition 3.2 of \cite{LV} to the fBm case for $H>1/2$. Before stating the theorem we introduce the following notation. Let $|I|$ indicate the length of the word $I$, where $I\in\{0,...,d\}^{k}$ for some $k\in\mathbb{N}$ and let $C_{b}^{\infty}(\mathbb{R}^{N},\mathbb{R}^{N})$, where $N\in\mathbb{N}$, be the space of $\mathbb{R}^{N}$-valued smooth functions defined in $\mathbb{R}^{N}$ whose derivatives of any order are bounded. We regard elements of $C_{b}^{\infty}(\mathbb{R}^{N},\mathbb{R}^{N})$ as vector fields on $\mathbb{R}^{N}$. Let $V_{0},...,V_{d}$ be such vector fields.
\begin{thm}\label{prNEW} Consider any function $f:\mathbb{R}^{N}\rightarrow\mathbb{R}$, whose derivatives of any order exist and are bounded, and vector fields $V_{0},...,V_{d}\in C_{b}^{\infty}(\mathbb{R}^{N},\mathbb{R}^{N})$. Let $T\in[0,\infty)$ and assume that there exists $M > 0$ and $0 \leq\gamma < 1/2$ such that, for every word $I$, $|(V_{i_{i}}\cdots V_{i_{k}}f)(x)| \leq	M^{|I|} (|I|!)^{\gamma}$. Then, we have
	\begin{equation*}
	\sup_{x\in\mathbb{R}^{N}}\Big|\mathbb{E}\left(f(\xi_{T,x})\right) -\sum_{j=1}^{n}\lambda_{j}f(\Phi_{T,x}(\omega_{j}))\Big|\leq\begin{cases}
	L_{1}(T)\quad\text{if $T\geq 1$},\\ L_{2}(T)\quad\text{if $T< 1$},
	\end{cases} 
	\end{equation*}
	where
	\begin{equation*}
	L_{1}(T):=CT^{(m+2)/2}\left(1+\sup\limits_{x\in\mathbb{R}^{N}}M_{x}^{(m+2)/2}\sum_{k=0}^{\infty}\frac{(dM_{x}KT)^{k}}{(k!)^{1/2-\gamma}}\right)
	\end{equation*}
	\begin{equation*}
L_{2}(T):=C' T^{2H}+C''T^{H(m+2)/2}\sup\limits_{x\in\mathbb{R}^{N}}M_{x}^{(m+2)/2}\sum_{k=0}^{\infty}\frac{(dM_{x}KT^{H})^{k}}{(k!)^{1/2-\gamma}},
	\end{equation*}
	and where $C,C',C''>0$ are constants independent of T and $K=\sqrt{\frac{2}{H(2H-1)}}$.
\end{thm}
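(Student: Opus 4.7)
The plan is to adapt the Lyons--Victoir cubature error bound to the fractional setting $H>\frac{1}{2}$ by replacing Stratonovich integration with Young integration and by using the sharp expected-signature decay of Theorem \ref{pr3} in place of the Brownian factorial estimates. The argument has three parts: a Taylor-type expansion of $f(\xi_{T,x})$, use of the cubature matching property to cancel the leading terms, and tail estimates on the two resulting remainders.

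Concretely, I would first write
\begin{equation*}
f(\xi_{T,x})=\sum_{\|I\|\leq m}(V_{i_1}\cdots V_{i_k}f)(x)\int_{\Delta^{k}[0,T]}dB^{I}+R_m(T,x),
\end{equation*}
where $I=(i_1,\ldots,i_k)\in\{0,1,\ldots,d\}^{k}$, $\|I\|$ is the natural homogeneity that counts $0$-letters twice and nonzero letters once, integration against $dB^0=dt$ is Riemann while integration against $dB^i$ for $i\geq 1$ is Young in the sense of \cite{Young}, and $R_m$ is an explicit weighted sum over words with $\|I\|\in\{m+1,m+2\}$. The same expansion applied along each cubature path $\omega_j$ gives the deterministic analogue with remainder $R_{m,j}$. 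By the defining matching property of the $\omega_j$ introduced in Section 7, namely $\sum_j\lambda_j\int_{\Delta^{k}[0,T]}d\omega_j^I=\mathbb{E}\int_{\Delta^{k}[0,T]}dB^I$ for every word with $\|I\|\leq m$, all leading terms cancel and
\begin{equation*}
\mathbb{E}f(\xi_{T,x})-\sum_j\lambda_j f(\Phi_{T,x}(\omega_j))=\mathbb{E}R_m(T,x)-\sum_j\lambda_j R_{m,j}(T,x),
\end{equation*}
reducing the problem to bounding the two remainders.

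The heart of the proof is the tail estimate. Combining the hypothesis $|V_If|\leq M^{|I|}(|I|!)^{\gamma}$ with Theorem \ref{pr3} on pure-fBm sub-words (which supplies factors of $(t-s)^{2\ell H}/(\ell!\,2^{\ell})$) and the elementary $T^{k}/k!$ bound on pure-time sub-words yields, after Stirling is used to trade $(|I|!)^{\gamma}$ against the factorials coming from Theorem \ref{pr3}, a fBm-directional tail of the form $\sum_k(dM_xKT^{H})^{k}/(k!)^{1/2-\gamma}$; the constant $K=\sqrt{2/(H(2H-1))}$ appears here as the Young-integral norm constant associated with the fBm covariance. An analogous estimate, formulated in terms of the bounded-variation norm of the cubature paths, controls $\sum_j\lambda_j R_{m,j}$. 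Splitting into the regimes $T\geq 1$ and $T<1$ then produces the two stated bounds: for $T\geq 1$, one crudely uses $T^{H}\leq T$ to collapse the Riemann and Young contributions into a common $T^{(m+2)/2}$ prefactor, yielding $L_1$; for $T<1$, keeping the exponents separate preserves the sharper $T^{H(m+2)/2}$-type decay, and the additive $C' T^{2H}$ accounts for the lowest-order unavoidable fBm contribution at small $T$ (which for $T\geq 1$ is absorbed into $T^{(m+2)/2}$). The main technical obstacle I anticipate is the combinatorial control of words mixing $0$-letters and nonzero letters under Young integration: a careful inductive bound is needed that produces exactly the factor $K^{\ell}/\sqrt{\ell!}$ on each pure-fBm sub-level, and this is precisely the step where the assumption $H>\frac{1}{2}$ is essential and where the extension beyond the Brownian Lyons--Victoir argument requires genuinely new work.
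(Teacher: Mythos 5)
Your overall architecture (stochastic Taylor expansion, cancellation of the terms up to degree $m$ by the cubature matching property, separate bounds on the stochastic and deterministic remainders, and the $T\geq 1$ versus $T<1$ split) is the same as the paper's, and your treatment of the deterministic remainder $\sum_j\lambda_j R_{m,j}$ via the bounded variation and scaling of the cubature paths is essentially what the paper does. However, there is a genuine gap in your key quantitative step, the bound on $\mathbb{E}[R_m(T,x,f)]$. The remainder has the form
\begin{equation*}
R_{m}(T,x,f)=\sum_{(i_{1},\ldots,i_{k})\in\mathcal{A}_{m},\,(i_{0},\ldots,i_{k})\notin\mathcal{A}_{m}}\int_{0<t_{0}<\cdots<t_{k}<T}(V_{i_{0}}\cdots V_{i_{k}}f)(\xi_{t_{0},x})\, d\hat{B}_{t_{0}}^{H,i_{0}}\cdots d\hat{B}_{t_{k}}^{H,i_{k}},
\end{equation*}
in which the integrand $(V_{i_{0}}\cdots V_{i_{k}}f)(\xi_{t_{0},x})$ is \emph{random} and correlated with the driving fBm. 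You therefore cannot reduce $\mathbb{E}[R_m]$ to the expected signature and invoke Theorem \ref{pr3}: that theorem controls $\mathbb{E}\int dB^{I}$ with a constant integrand, a signed quantity that vanishes for odd words and is far smaller than what is needed here. What is actually required is an $L^{1}$ or $L^{2}$ estimate of the iterated integrals themselves, of the type $\mathbb{E}\big[(\int_{\Delta^{k}[0,T]}d\hat{B}^{H,I})^{2}\big]^{1/2}\leq T^{kH+(1-H)\mathrm{card}\{j:i_j=0\}}K^{k}/\sqrt{k!}$, combined with Cauchy--Schwarz against the sup bound $M^{|I|}(|I|!)^{\gamma}$ on the vector fields. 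This is precisely the content of Theorem 3.4 of Baudoin--Zhang, which the paper cites at this point, and it is where the constant $K=\sqrt{2/(H(2H-1))}$ and the restriction $\gamma<1/2$ genuinely originate; your proposal presents $K$ as "the Young-integral norm constant associated with the fBm covariance" appearing after a Stirling manipulation of Theorem \ref{pr3}, which does not produce a valid bound on $\mathbb{E}[R_m]$ as written.

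A secondary, more minor omission: the paper does not apply the Taylor expansion directly along the cubature paths, but instead constructs the discrete measure $\mathbb{Q}_{T}=\sum_{j}\lambda_{j}\delta_{\omega_{j}^{\star}}$ on Wiener space, where the $\omega_{j}^{\star}$ are preimages of the cubature paths under the Volterra kernel representation $B^{H,i}_{t}=\int_{0}^{t}K(t,s)\,dB^{i}_{s}$; this is what makes the identity $\sum_{j}\lambda_{j}f(\Phi_{T,x}(\omega_{j}))=\mathbb{E}_{\mathbb{Q}_{T}}[f(\xi_{T,x})]$ and the exact cancellation of the degree-$\leq m$ terms rigorous. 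Your direct path-by-path expansion can be made to work, but if you keep your route you must still replace the Theorem \ref{pr3} step by a genuine $L^{2}$ estimate of the iterated integrals (for instance via the shuffle-product identity for $\mathbb{E}[(\int dB^{I})^{2}]$, or by citing Baudoin--Zhang as the paper does).
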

In the second main results, we provide the cubature formula for the one dimensional fBm up to degree 5. We will discuss the concept of the degree of a cubature formula in Section 7. Now, let $C^{0}_{0,bv}([0, T ], \mathbb{R}^{d} )$ be the space of $\mathbb{R}^{d}$-valued continuous paths of bounded variation defined in $[0, T]$ and which starts at zero.
\begin{thm}\label{pr4}
Let $H \geq \frac{1}{2}$. Define $\hat{B}$ and $\hat{\omega}$ to be
\begin{equation*}
\hat{B}_{t_{l}}^{i_{l}}=
\begin{cases}
t_{l}, \qquad if \quad i_{l}=0,\\
B_{t_{l}} \qquad if \quad i_{l}=1,
\end{cases}\quad
and \qquad
\hat{\omega}_{t_{l},j}^{i_{l}}=
\begin{cases}
t_{l}, \qquad if \quad i_{l}=0,\\
\omega_{t_{l},j} \qquad if \quad i_{l}=1,
\end{cases}
\end{equation*}
for $l=1, . . . , k$ and $j=1,...,n$ and where $\omega_{j}\in C^{0}_{0,bv}([0, T ], \mathbb{R}^{d} )$. The weights $\{\lambda_1, \lambda_2, \lambda_3\}$ and the paths $\{\omega_{t,1},\omega_{t,2},\omega_{t,3}\} $ will satisfy the cubature formula 
\begin{equation*}
\mathbb{E}\left[\int_{0<t_{1}<,...,<t_{k}< T} d\hat{B}_{t_{1}}^{i_{1}}\cdot\cdot\cdot d\hat{B}_{t_{k}}^{i_{k}}\right]=\sum_{j=1}^{n}\lambda_{j}\int_{0<t_{1}<,...,<t_{k}< T}d\hat{\omega}_{t_{1},j}^{i_{1}}\cdot\cdot\cdot d\hat{\omega}_{t_{1},j}^{i_{k}},
\end{equation*}
for the following degree
\begin{equation*}
Degree=\begin{cases}
5 \qquad for \qquad \frac{1}{2}\leq H<\frac{2}{3},\\
4 \qquad for \qquad \frac{2}{3}\leq H<1,\\
\end{cases}
\end{equation*}
if $n=3$, $\lambda_{1}=\lambda_{2}=\frac{1}{6}$ and $\lambda_{3}=\frac{2}{3}$, and 
\begin{equation*}
\omega_{t,1}=\begin{cases}
(2\alpha-\beta) t, \qquad t\in[0,\frac{1}{3}],\\
(\alpha-\beta)+(2\beta-\alpha)t, \qquad t\in[\frac{1}{3},\frac{2}{3}],\\
(\beta-\alpha)+(2\alpha-\beta)t, \qquad t\in[\frac{2}{3},1],\\
\end{cases}
\end{equation*}
where
\begin{equation*}
\alpha:=\dfrac{2H\sqrt{3}+\sqrt{3}}{2H+1} \qquad and \qquad \beta:=\dfrac{\sqrt{-96H^{2}+66H+57}}{2H+1},
\end{equation*}
and $\omega_{t,2}=-\omega_{t,1}$ and $\omega_{t,3}=0$ for $t\in[0,1]$.
\end{thm}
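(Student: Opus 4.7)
My plan is to verify the cubature identity word-by-word for all admissible words, exploiting symmetry to shrink the list and then reading off $\alpha,\beta$ from the surviving identities. In dimension $d=1$ with time augmentation every word $I$ lies in $\{0,1\}^{k}$, and I write $n_{a}(I)$ for the number of occurrences of $a\in\{0,1\}$ in $I$.

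The first reduction is by symmetry. Since $B\stackrel{Law}{=}-B$, the LHS vanishes whenever $n_{1}(I)$ is odd. The assignment $\omega_{t,2}=-\omega_{t,1}$, $\lambda_{1}=\lambda_{2}$, $\omega_{t,3}\equiv 0$ forces the RHS to vanish in that case as well: path $3$ contributes nothing because any $1$-letter kills its integrand, while the contributions from paths $1$ and $2$ cancel by the sign change. For words made only of $0$'s the identity collapses to $(\lambda_{1}+\lambda_{2}+\lambda_{3})T^{k}/k!=T^{k}/k!$, which holds because the weights sum to one. It therefore suffices to check the identity for words with an even and non-zero number of $1$'s; at the degrees in question these are $(1,1)$, $(1,1,1,1)$, and, for the degree-$5$ claim, the three mixed words $(0,1,1)$, $(1,0,1)$ and $(1,1,0)$.

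For each such word I compute both sides in closed form. For the LHS I invoke the Baudoin--Coutin expected signature formula, which expresses $\mathbb{E}\left[\int_{\Delta^{k}[0,T]}d\hat B^{I}\right]$ as a sum over perfect pairings of the $1$-positions, each term being an iterated integral of the covariance density $H(2H-1)|u-v|^{2H-2}$ together with ordinary $du$'s at the $0$-positions. For the RHS I use that $\omega_{t,1}$ has piecewise-constant derivative $2\alpha-\beta$, $2\beta-\alpha$, $2\alpha-\beta$ on the three equal sub-intervals of $[0,1]$, so each $\int_{\Delta^{k}[0,T]}d\hat\omega_{\cdot,1}^{I}$ reduces, after the natural $T^{n_{1}H+n_{0}}$ scaling, to an explicit polynomial in $\alpha$ and $\beta$. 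Matching the two words $(1,1)$ and $(1,1,1,1)$ yields a pair of polynomial equations whose positive solution is precisely
\[
\alpha=\frac{2H\sqrt{3}+\sqrt{3}}{2H+1}=\sqrt{3},\qquad \beta=\frac{\sqrt{-96H^{2}+66H+57}}{2H+1},
\]
and a direct check shows the radicand stays non-negative throughout $[1/2,1)$.

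The main obstacle is the final step: showing that, with these values of $\alpha$ and $\beta$, the three degree-$5$ identities for $(0,1,1)$, $(1,0,1)$ and $(1,1,0)$ hold when $H<2/3$ but fail for $H\geq 2/3$. I would exploit the reflection symmetry $t\mapsto 1-t$ of $\omega_{\cdot,1}$ to collapse the three mixed words into a single non-trivial relation, and then evaluate the corresponding Baudoin--Coutin pairing integral (the covariance density $H(2H-1)|u-v|^{2H-2}$ integrated against one intermediate $ds$) in closed form. After substituting the expressions above for $\alpha$ and $\beta$, the resulting identity becomes an algebraic equation in $H$; locating its root at $H=2/3$ and establishing the sign of the remainder for $H>2/3$ is what pins down the claimed dichotomy. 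Carrying out these pairing integrals cleanly and matching them against the piecewise polynomial produced by the RHS is the most delicate part of the argument.
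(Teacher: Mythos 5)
Your overall strategy---use the symmetry $\omega_{t,2}=-\omega_{t,1}$, $\omega_{t,3}\equiv 0$, $\lambda_1=\lambda_2$ to kill every word with an odd number of $1$'s, dispose of the all-zero words by $\sum_j\lambda_j=1$, and then verify the surviving identities $(1,1)$, $(1,1,1,1)$ and the three mixed words $(0,1,1),(1,0,1),(1,1,0)$ by explicit computation---is essentially the paper's: its appendix lists all seventeen word-identities, observes that twelve vanish identically for this symmetric ansatz, and solves the remaining system. However, the way you assign roles to the equations is backwards, and this opens a genuine gap. The words $(1,1)$ and $(1,1,1,1)$ see only the weights and the terminal values $\omega_{j,1}$: they read $2\lambda_1\omega_{1,1}^2=1$ and $2\lambda_1\omega_{1,1}^4=3$, which force $\lambda_1=\lambda_2=\tfrac16$, $\lambda_3=\tfrac23$ and $\omega_{1,1}=\sqrt3=\alpha$, and nothing else. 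They are completely insensitive to the interior shape of the path, so they cannot possibly yield $\beta$. It is the mixed words---the only equations in which $H$ enters the stochastic side, through $\mathbb{E}\int dB\,dB\,du=\tfrac{1}{2(2H+1)}$, $\mathbb{E}\int dB\,du\,dB=\tfrac{2H-1}{2(2H+1)}$, $\mathbb{E}\int du\,dB\,dB=\tfrac{1}{2(2H+1)}$---that determine the middle slope: after imposing continuity, the reflection symmetry of the derivative, and $\omega_{1,1}=\sqrt3$ (which makes one of the three mixed equations redundant via the shuffle relation), they reduce to the quadratic $c_1^2-4\sqrt3\,c_1=9(5-8H)/(2H+1)$ for the slope on $[\tfrac23,1]$, whose solution is what produces $\beta$.

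Consequently your ``main obstacle'' rests on a false premise: the three mixed identities do not hold for $H<\tfrac23$ and fail for $H\ge\tfrac23$; they hold for every $H\in[\tfrac12,1)$, because $\beta$ is \emph{defined} so that they hold, and the radicand $-96H^2+66H+57=3(2H+1)(19-16H)$ remains positive on all of $[\tfrac12,1)$. There is therefore no algebraic equation in $H$ with a root at $\tfrac23$ to be located, and the proposed mechanism for the degree dichotomy cannot work. The stated threshold is a matter of which words belong to $\mathcal{A}_m$ (the degree of a word of length $k$ with $n_0$ zero letters is $2Hk+(2-2H)n_0$, so membership thresholds move with $H$), not a point at which any of the five nontrivial equations breaks down; the paper itself obtains the cubature identities purely by solving the system above and does not derive the cutoff from a failed identity. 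To complete your argument you would need to (i) let the mixed words, not $(1,1,1,1)$, determine $\beta$, and (ii) justify the degree claim by a separate enumeration of $\mathcal{A}_4$ and $\mathcal{A}_5$ rather than by a sign change at $H=\tfrac23$.
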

\noindent It is important to remark that for $H = \frac{1}{2}$, \textit{i.e.} when the fractional Brownian motion is just a Brownian motion, the results obtained in this theorem perfectly coincide with the results of Lyons and Victoir obtained in \cite{LV}.
\section{Preliminaries}\label{Preliminaries}
One of the main concepts of this paper is the signature of the fractional Brownian motion. In order to give a definition of signature we need to introduce first the following concept: the \textit{p-variation} of a path.
\begin{defn} \label{p-var}
Let $p\geq 1$ be a real number. Let $X:J\rightarrow E$ be continuous path, where J is a compact interval and E is a finite dimensional Banach space. The p-variation of X on the interval J is defined by
\begin{equation*}
\|X\|_{p,J}=\left[\sup_{\mathcal{D}\subset J}\sum_{i=0}^{r-1}|X_{t_{i}}-X_{t_{i+1}}|^{p}\right]^{\frac{1}{p}},
\end{equation*}
where the supremum is taken over all the partitions of J.
\end{defn}
\noindent A path $X$ is said to be of finite $p$-variation over the interval $J$ if $\|X\|_{p,J}<\infty$.\\
For signature of a stochastic process we mean the following.
\begin{defn}
Let E be a finite dimensional Banach space. Let $X:[0,T]\rightarrow E$ be a continuous path with finite p-variation for some $p<2$. The signature of X is the element $S(X)$ of $T(E):=\{\mathbf{a}=(a_{0},a_{1},...)|\forall n\geq 0, a_{n}\in E^{\otimes n}\}$ defined as follows
\begin{equation*}
 S(X_{[0,T]})=(1,X_{[0,T]}^{1},X_{[0,T]}^{2},...),
\end{equation*}
where, for each $n\geq 1$, 
\begin{equation*}
X_{[0,T]}^{n}=\int_{0<u_{1}<...<u_{n}<T}dX_{u_{1}}\otimes...\otimes dX_{u_{n}}.
\end{equation*}
\end{defn}
\noindent The integration in the previous definition is defined in the sense of Young \cite{Young}.

The fractional Brownian motion is a $p$-variation path for all $p>\frac{1}{H}$. Hence, for $H\in(\frac{1}{2},1)$ we have $p<2$.
Notice that for $I$, a word composed by the letters $(i_{1},...,i_{2k})$ where each $i_{l}\in{1,...,d}$, we have that $dB^{I}:=dB^{i_{1}}\cdot\cdot\cdot dB^{i_{2k}}$. Therefore, we have that the $2k$-th element of the signature of the fBm and its piecewise approximation are respectively given by
\begin{equation}
\mathbb{E}\left(\int_{0<u_{1}<...<u_{2k}<T}dB_{u_{1}}\otimes\cdot\cdot\cdot \otimes dB_{u_{2k}}\right)=\sum_{i_{1}=1}^{d}\cdot\cdot\cdot \sum_{i_{2k}=1}^{d}\mathbb{E}\left(\int_{\Delta^{2k}[0,T]}dB^{I}\right)e_{i_{1}}\otimes\cdot\cdot\cdot \otimes e_{i_{2k}},\quad \text{and by}
\end{equation}
\begin{equation}
\mathbb{E}\left(\int_{0<u_{1}<...<u_{2k}<T}dB^{m}_{u_{1}}\otimes\cdot\cdot\cdot\otimes dB^{m}_{u_{2k}}\right)=\sum_{i_{1}=1}^{d}\cdot\cdot\cdot \sum_{i_{2k}=1}^{d}\mathbb{E}\left(\int_{\Delta^{2k}[0,T]}dB^{m,I}\right)e_{i_{1}}\otimes\cdot\cdot\cdot \otimes e_{i_{2k}},
\end{equation}
where $(e_{1},...,e_{d})$ is the basis of $\mathbb{R}^{d}$. Notice that we will consider only the $2k$ case since for odd values the expected value of the iterated integrals is zero due to the symmetry of the fBm.

We recall two results which can be found for example in Baudoin and Coutin paper $\cite{BauCou}$. First, we present a reformulation of the Isserlis' (or Wick's) theorem.
\begin{lem}
Let $G = (G_{ 1 }, . . . , G_{ 2k })$ be a centered Gaussian vector. We have
\begin{equation}
\mathbb{E}(G_{ 1 }\cdot\cdot\cdot G_{ 2k })=\dfrac{1}{k!2^{k}}\sum_{\sigma\in\mathcal{G}_{2k}}\prod_{l=1}^{k}\mathbb{E}(G_{\sigma(2l-1)}G_{\sigma(2l)}),
\end{equation}
where $\mathcal{G}_{2k}$ is the group of the permutations of the set $\{1, . . . ,2k\}$.
\end{lem}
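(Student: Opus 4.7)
The plan is to derive the identity by reading off the coefficient of $\theta_1 \theta_2 \cdots \theta_{2k}$ in the moment generating function of the Gaussian vector $G$. Denoting $\Sigma_{ij} := \mathbb{E}(G_i G_j)$, I would start from the standard fact that
\[
\mathbb{E}\left(\exp\left(\sum_{i=1}^{2k}\theta_i G_i\right)\right) = \exp\left(\tfrac{1}{2}\sum_{i,j=1}^{2k} \theta_i \theta_j \Sigma_{ij}\right).
\]
The moment $\mathbb{E}(G_1 \cdots G_{2k})$ equals the coefficient of $\theta_1 \cdots \theta_{2k}$ on the left-hand side, so the task reduces to extracting the same coefficient on the right-hand side.

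Next, I would Taylor-expand the right-hand side as
\[
\sum_{n=0}^{\infty} \frac{1}{n!\,2^n}\left(\sum_{i,j=1}^{2k} \theta_i \theta_j \Sigma_{ij}\right)^n.
\]
Since each $\theta_\ell$ must appear with degree exactly one in the target monomial, only the $n = k$ term contributes. I would then expand that $k$-th power as a sum over sequences $(i_1, j_1, \ldots, i_k, j_k) \in \{1,\ldots,2k\}^{2k}$ of monomials $\theta_{i_1}\theta_{j_1}\cdots\theta_{i_k}\theta_{j_k}\cdot \Sigma_{i_1 j_1}\cdots \Sigma_{i_k j_k}$. Extracting the coefficient of $\theta_1 \cdots \theta_{2k}$ forces this sequence to enumerate $\{1,\ldots,2k\}$, i.e., to be a permutation $\sigma \in \mathcal{G}_{2k}$, yielding the claimed sum with the prefactor $\frac{1}{k!\,2^k}$.

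The main obstacle is purely combinatorial bookkeeping: one must verify that the factor $\frac{1}{k!\,2^k}$ correctly accounts for the redundancy arising when an unordered perfect matching of $\{1,\ldots,2k\}$ is represented as an ordered sequence of ordered pairs, since there are $k!$ orderings of the $k$ pairs and $2^k$ internal flips within the pairs. As a cross-check, an alternative route would be to quote the classical Isserlis--Wick theorem in its matching form, $\mathbb{E}(G_1 \cdots G_{2k}) = \sum_{P} \prod_{\{a,b\}\in P} \Sigma_{ab}$ with $P$ ranging over perfect matchings of $\{1,\ldots,2k\}$, and then observe that each matching corresponds to exactly $k!\,2^k$ elements of $\mathcal{G}_{2k}$; dividing by this factor recovers precisely the stated identity.
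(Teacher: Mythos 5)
Your argument is correct. Note, however, that the paper does not actually prove this lemma: it is stated as a recalled result (a reformulation of the Isserlis/Wick theorem) cited from Baudoin and Coutin, so your moment-generating-function derivation supplies a proof where the paper offers only a reference. The derivation itself is the standard one and goes through: the moment $\mathbb{E}(G_1\cdots G_{2k})$ is the coefficient of $\theta_1\cdots\theta_{2k}$ in $\mathbb{E}\exp(\sum_i\theta_i G_i)=\exp(\tfrac12\sum_{i,j}\theta_i\theta_j\Sigma_{ij})$ (legitimate since the Gaussian MGF is analytic near the origin), only the $n=k$ term of the exponential series contributes, and extracting the coefficient forces the index sequence to be a permutation of $\{1,\dots,2k\}$. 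One small clarification: in this route the prefactor $\tfrac{1}{k!2^k}$ requires no separate combinatorial verification --- it is simply the $\tfrac{1}{n!2^n}$ from the Taylor expansion of $\exp$ evaluated at $n=k$, with the sum over $\sigma\in\mathcal{G}_{2k}$ appearing directly. The ``redundancy'' count ($k!$ orderings of the pairs times $2^k$ internal flips per unordered perfect matching) is only needed for your cross-check against the matching form $\sum_P\prod_{\{a,b\}\in P}\Sigma_{ab}$, and that count is also correct, so the two forms agree.
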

\noindent Therefore, from the previous lemma we have
\begin{equation}
\mathbb{E}\left(\int_{\Delta^{2k}[s,t]}dB^{m,I}\right)=\dfrac{1}{k!2^{k}}\sum_{\sigma\in\mathcal{G}_{2k}}\int_{\Delta^{2k}[s,t]}\prod_{l=1}^{k}\mathbb{E}\left(\dfrac{dB^{m,i_{\sigma(2l-1)}}}{dt_{\sigma(2l-1)}}\dfrac{dB^{m,i_{\sigma(2l)}}}{dt_{\sigma(2l)}}\right)dt_{1}\cdot\cdot\cdot dt_{2k}.
\end{equation}
Notice that if $t_{\sigma(2l-1)}\in[t_{j},t_{j+1}]$ and $t_{\sigma(2l)}\in[t_{i},t_{i+1}]$ then
\begin{equation}
\mathbb{E}\left(\dfrac{dB^{m,i_{\sigma(2l-1)}}}{dt_{\sigma(2l-1)}}\dfrac{dB^{m,i_{\sigma(2l)}}}{dt_{\sigma(2l)}}\right)=\delta_{i_{\sigma(2l)},i_{\sigma(2l-1)}} H(2H-1)m^{2}\int_{t_{i}}^{t_{i+1}}\int_{t_{j}}^{t_{j+1}}|x-y|^{2H-2}dxdy.
\end{equation}
Further, recall Theorem 31 of $\cite{BauCou}$.
\begin{thm}\label{31}
Assume $H > \frac{1}{2}$. Letting $I = (i_{1}, . . . , i_{2k})$ be a word, then
\begin{equation}
\mathbb{E}\left(\int_{\Delta^{2k}[0,1]}dB^{I}\right)=\dfrac{H^{k}(2H-1)^{k}}{k!2^{k}}\sum_{\sigma\in\mathcal{G}_{2k}}\int_{\Delta^{2k}[0,1]}\prod_{l=1}^{k}\delta_{i_{\sigma(2l)},i_{\sigma(2l-1)}}|t_{\sigma(2l)}-t_{\sigma(2l-1)}|^{2H-2}dt_{1}\cdot\cdot\cdot dt_{2k}.
\end{equation}
\end{thm}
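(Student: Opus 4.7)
The plan is to combine Isserlis' (Wick's) theorem, already stated in the excerpt, with the fact that, for $H>1/2$, the covariance $R_{H}(s,t)$ of the one-dimensional fBm admits the locally integrable mixed partial derivative $\partial_{s}\partial_{t}R_{H}(s,t)=H(2H-1)|t-s|^{2H-2}$. Rather than trying to manipulate $\int_{\Delta^{2k}[0,1]}dB^{I}$ directly as a Young integral, I would derive the identity by passing to the $m\to\infty$ limit in the discrete identity already written down in (4)--(5) of the excerpt for the linear piecewise path $B^{m}$.

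Concretely, combining (4) and (5) rewrites
\begin{equation*}
\mathbb{E}\left(\int_{\Delta^{2k}[0,1]}dB^{m,I}\right)=\frac{H^{k}(2H-1)^{k}}{k!\,2^{k}}\sum_{\sigma\in\mathcal{G}_{2k}}\int_{\Delta^{2k}[0,1]}\prod_{l=1}^{k}\delta_{i_{\sigma(2l)},i_{\sigma(2l-1)}}K^{m}_{\sigma,l}(t)\,dt_{1}\cdots dt_{2k},
\end{equation*}
where $K^{m}_{\sigma,l}(t)=m^{2}\int_{t_{j}}^{t_{j+1}}\int_{t_{i}}^{t_{i+1}}|x-y|^{2H-2}dx\,dy$ is the cell-average of the kernel over the grid cells containing $t_{\sigma(2l-1)}$ and $t_{\sigma(2l)}$. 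By Lebesgue differentiation, $K^{m}_{\sigma,l}\to|t_{\sigma(2l-1)}-t_{\sigma(2l)}|^{2H-2}$ almost everywhere off the diagonal as $m\to\infty$, and the majorant $\prod_{l=1}^{k}|t_{\sigma(2l-1)}-t_{\sigma(2l)}|^{2H-2}$ is integrable on $\Delta^{2k}[0,1]$ because $2H-2>-1$, so dominated convergence applies on the right-hand side. On the left-hand side, $B^{m}\to B$ in $p$-variation for every $p\in(1/H,2)$, so by continuity of the Young iterated integral $\int_{\Delta^{2k}[0,1]}dB^{m,I}$ converges almost surely to $\int_{\Delta^{2k}[0,1]}dB^{I}$; a uniform-in-$m$ moment bound from the Gaussian rough-path estimates upgrades this almost-sure convergence to convergence in $L^{1}$, and hence of expectations. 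Equating the two limits and noting that the $k$ factors of $H(2H-1)$ are already pulled out of the product gives the identity as stated.

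The main obstacle is precisely the uniform-in-$m$ $L^{1}$ bound on the iterated Young integrals of $B^{m}$: this is the step where the hypothesis $H>1/2$ plays a quantitative role, since it keeps $p<2$ so the iterated integrals are honest Young integrals with quantitative estimates in terms of $p$-variation and the Cameron--Martin norm, and no rough-path correction enters. The remaining ingredients --- the Lebesgue differentiation for the averaged kernel $K^{m}_{\sigma,l}$, dominated convergence on the right, and the combinatorial bookkeeping that identifies the surviving pairings via the Kronecker deltas $\delta_{i_{\sigma(2l)},i_{\sigma(2l-1)}}$ from the independence of the $d$ coordinate fBms --- are routine once the uniform $L^{1}$ bound is in hand.
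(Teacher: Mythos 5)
The paper does not actually prove this statement: it is recalled verbatim as Theorem 31 of Baudoin and Coutin \cite{BauCou}, whose own argument applies Wick's formula directly to the $L^{2}$ structure of the first chaos of the fBm for $H>\frac{1}{2}$, where the inner product is given by the kernel $H(2H-1)|t-s|^{2H-2}$. Your route is therefore genuinely different: you obtain the identity as a double limit over the piecewise linear approximation, using the discrete Wick identity (4)--(5) on one side and pathwise Young convergence on the other. This buys you a proof that never has to make rigorous sense of the ``covariance density of $\dot{B}$'' for the non-differentiable path, at the price of importing two nontrivial external facts: almost sure convergence of $B^{m}$ to $B$ in $p$-variation for $p\in(1/H,2)$, and Gaussian integrability of $\|B\|_{p,[0,1]}$ so that the Young bound $\bigl|\int_{\Delta^{2k}}dB^{m,I}\bigr|\leq C_{p,k}\|B^{m}\|_{p}^{2k}\leq C_{p,k}\|B\|_{p}^{2k}$ gives the uniform integrability you need. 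You correctly identify that step as the crux, and the ingredients you name do close it.

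Two small repairs are needed on the right-hand side. First, the cell average $K^{m}_{\sigma,l}(t)$ is \emph{not} pointwise dominated by $|t_{\sigma(2l)}-t_{\sigma(2l-1)}|^{2H-2}$: on a diagonal cell the average equals $\frac{m^{2-2H}}{H(2H-1)}$ while the kernel at the far corner of that cell is only $m^{2-2H}$, so the stated majorant fails by a factor. It does hold that $K^{m}_{\sigma,l}(t)\leq C_{H}\,|t_{\sigma(2l)}-t_{\sigma(2l-1)}|^{2H-2}$ with $C_{H}$ depending only on $H$ (compare the maximum and minimum of the kernel over a cell at distance $\geq 2/m$ from the diagonal, and handle the diagonal and adjacent cells by the explicit computation above), and a constant multiple of an integrable function is still a valid dominating function, so dominated convergence survives; the integrability of $\prod_{l}|t_{\sigma(2l)}-t_{\sigma(2l-1)}|^{2H-2}$ over $[0,1]^{2k}$ follows by Tonelli since the pairs $\{\sigma(2l-1),\sigma(2l)\}$ are disjoint and each contributes $\frac{1}{H(2H-1)}<\infty$. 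Second, the pointwise convergence of $K^{m}_{\sigma,l}$ off the diagonal is simply continuity of the kernel there (the cells shrink to the point), so no appeal to Lebesgue differentiation is needed. With these adjustments the argument is complete.
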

\noindent We conclude this section with the following remark.
\begin{rem}
By using the scaling and the stationary increments property of the fBm, we have that
\begin{equation*}
\mathbb{E}\left(\int_{\Delta^{2k}[s,t]}dB^{I}\right)=\mathbb{E}\left(\int_{\Delta^{2k}[0,t-s]}dB^{I}\right)=(t-s)^{2kH}\mathbb{E}\left(\int_{\Delta^{2k}[0,1]}dB^{I}\right),\quad \text{and}
\end{equation*}
\begin{equation*}
\mathbb{E}\left(\int_{\Delta^{2k}[s,t]}dB^{m,I}\right)=\mathbb{E}\left(\int_{\Delta^{2k}[0,t-s]}dB^{m,I}\right)=(t-s)^{2kH}\mathbb{E}\left(\int_{\Delta^{2k}[0,1]}dB^{m,I}\right).
\end{equation*}
Hence, it is sufficient to focus on the case $\Delta^{2k}[0,1]$ in the proofs of our results.
\end{rem}
\section{Proof of Theorem \ref{pr1}}\label{ChapterSharp}
The first result we prove regards rate of convergence of the expected signature of the piecewise approximation of the fractional Brownian motion to its exact value. The sharpness of the rate of convergence is a delicate matter and it will be discussed at the end of this section.\\
In this section we prove Theorem $\ref{pr1}$. As mentioned before, this theorem covers the weak convergence rate of the signature of the fBm. A previous result on the strong convergence rate was obtained by Friz and Riedel in $\cite{FR}$. Recall from Theorem $\ref{31}$ that we have 
\begin{equation*}
\Bigg|\mathbb{E}\left(\int_{\Delta^{2k}[0,1]}dB^{I}\right)-\mathbb{E}\left(\int_{\Delta^{2k}[0,1]}dB^{m,I}\right)\Bigg|
\end{equation*}
\begin{equation*}
=\Bigg|\dfrac{H^{k}(2H-1)^{k}}{k!2^{k}}\sum_{\sigma\in\mathcal{G}_{2k}}\int_{\Delta^{2k}[0,1]}\prod_{l=1}^{k}\delta_{i_{\sigma(2l)},i_{\sigma(2l-1)}}|t_{\sigma(2l)}-t_{\sigma(2l-1)}|^{2H-2}
\end{equation*}
\begin{equation*}
-\prod_{l=1}^{k}\delta_{i_{\sigma(2l)},i_{\sigma(2l-1)}}\sum_{i,j=1}^{m}\textbf{1}_{[t_{i},t_{i+1}]\times[t_{j},t_{j+1}]}(t_{\sigma(2l)},t_{\sigma(2l-1)})m^{2}\int_{t_{i}}^{t_{i+1}}\int_{t_{j}}^{t_{j+1}}|x-y|^{2H-2}dxdydt_{1}\cdot\cdot\cdot dt_{2k}\Bigg|.
\end{equation*}
\begin{figure}
\centerline{\includegraphics[scale=0.5]{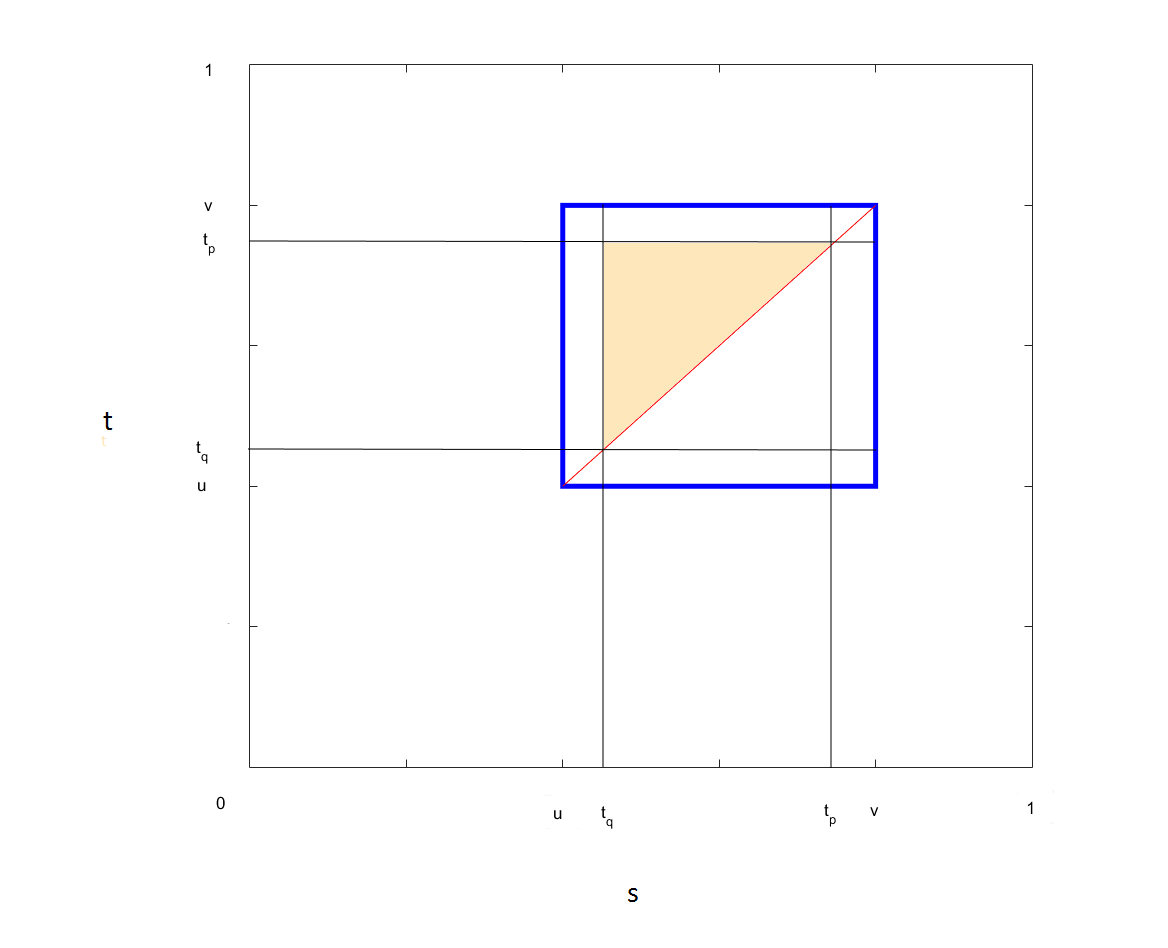}}
\caption{\small Representation of the area of the double integral (\ref{integrals}).}
\end{figure}
In order to understand this as a whole we need to first understand the single parts of it. In other words let us focus on
\begin{equation*}
\int_{u_{2}}^{v_{2}}\int_{u_{1}}^{v_{1}}\Big(|t-s|^{2H-2}-\sum_{i,j=1}^{m}\textbf{1}_{[t_{i},t_{i+1}]\times[t_{j},t_{j+1}]}(t,s)m^{2}\int_{t_{i}}^{t_{i+1}}\int_{t_{j}}^{t_{j+1}}|x-y|^{2H-2}dxdy\Big)dsdt,
\end{equation*}
where $u_{1},u_{2},v_{1},v_{2},t,s\in\{t_{1},...,t_{2k}\}$.
There are two possible cases. The first one is when $t$ and $s$ are next to each other. In this case, we have
\begin{equation}\label{first}
\int_{u}^{v}\int_{u}^{t}\Big(|t-s|^{2H-2}-\sum_{i,j=1}^{m}\textbf{1}_{[t_{i},t_{i+1}]\times[t_{j},t_{j+1}]}(t,s)m^{2}\int_{t_{i}}^{t_{i+1}}\int_{t_{j}}^{t_{j+1}}|x-y|^{2H-2}dxdy\Big)dsdt
\end{equation}
with $u,v,t,s\in\{t_{1},...,t_{2k}\}$ with $u\neq v\neq t\neq s$.
The second one is that $s$ and $t$ are not next to each other. Hence, we have
\begin{equation}\label{second}
\int_{u_{2}}^{v_{2}}\int_{u_{1}}^{v_{1}}\Big(|t-s|^{2H-2}-\sum_{i,j=1}^{m}\textbf{1}_{[t_{i},t_{i+1}]\times[t_{j},t_{j+1}]}(t,s)m^{2}\int_{t_{i}}^{t_{i+1}}\int_{t_{j}}^{t_{j+1}}|x-y|^{2H-2}dxdy\Big)dsdt
\end{equation}
with $u_{1}\neq u_{2}\neq v_{1}\neq v_{2}\neq t\neq s$.

We will focus first on the first case $(\ref{first})$, which is more delicate. We are going to split this double integral into different parts according to our piecewise approximation. In particular, let $t_{q}$ be the lowest grid time above $u$ and $t_{p}$ the highest grid time below $v$, as shown in the Figure 1. Then we have that our double integral can be rewritten in the following way:
\begin{equation}\label{integrals}
\int_{u<s<t<v}=\int_{u}^{v}\int_{u}^{t}=\int_{u}^{t_{q}}\int_{u}^{t}+\int_{t_{q}}^{t_{p}}\int_{u}^{t_{q}}+\int_{t_{p}}^{v}\int_{u}^{t_{q}}+\int_{t_{p}}^{v}\int_{t_{q}}^{t_{p}}+\int_{t_{p}}^{v}\int_{t_{p}}^{t}+\int_{t_{q}}^{t_{p}}\int_{t_{q}}^{t}.
\end{equation}
\noindent In order to better understand this decomposition it is helpful to look at the upper triangle in Figure 1 (\textit{i.e.} the triangle from the red diagonal to the upper and left blue edges). The first integral is the small triangle in the lower left. From here you first move up and then move right. The last integral is the central yellow triangle in Figure 1. For this integral the piecewise approximation and the exact value are equal, hence their difference is zero. This is because the piecewise approximation is equal to the exact value if the points in time considered coincide with the grid points.

What we want to show now is the speed at which the difference between the piecewise approximation and the exact value for these integrals is going to zero as the grid size goes to zero (\textit{i.e.} $m\rightarrow\infty$).
\\
In order to facilitate the reading and understanding of the content of this section, instead of having a 7 page long proof we decided to split the proof of Theorem \ref{pr1} in different small propositions.
\\ The following propositions regards the rate of convergence of each double integral of the right hand side of equation $(\ref{integrals})$. First, let us focus on the integrals: $\int_{u}^{t_{q}}\int_{u}^{t}$ and $\int_{t_{p}}^{v}\int_{t_{p}}^{t}$. We have the following proposition.
\begin{pro} The differences
\begin{equation*} 
\Bigg|\int_{u}^{t_{q}}\int_{u}^{t}\Big(|t-s|^{2H-2}-\sum_{i,j=1}^{m}\normalfont\textbf{1}_{[t_{i},t_{i+1}]\times[t_{j},t_{j+1}]}(t,s)m^{2}\int_{t_{i}}^{t_{i+1}}\int_{t_{j}}^{t_{j+1}}|x-y|^{2H-2}dxdy\Big)dsdt\Bigg|
\end{equation*}
and 
\begin{equation*} 
\Bigg|\int_{t_{p}}^{v}\int_{t_{p}}^{t}\Big(|t-s|^{2H-2}-\sum_{i,j=1}^{m}\normalfont\textbf{1}_{[t_{i},t_{i+1}]\times[t_{j},t_{j+1}]}(t,s)m^{2}\int_{t_{i}}^{t_{i+1}}\int_{t_{j}}^{t_{j+1}}|x-y|^{2H-2}dxdy\Big)dsdt\Bigg|
\end{equation*}
are bounded by
\begin{equation*}
\dfrac{m^{-2H}}{H(2H-1)}.
\end{equation*}
\end{pro}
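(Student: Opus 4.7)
The plan is to exploit the fact that both small triangular regions $\{(s,t):u<s<t<t_q\}$ and $\{(s,t):t_p<s<t<v\}$ lie entirely inside a single grid cell, which collapses the double sum in the integrand to one term and turns the proof into a direct computation. I will describe the argument for the first integral; the second is identical by symmetry, using $v-t_p\leq \frac{1}{m}$ instead of $t_q-u\leq \frac{1}{m}$.

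First I would observe that since $t_q$ is the lowest grid time strictly above $u$, we have $u\in[t_{q-1},t_q]$, so for every $(s,t)$ with $u\leq s\leq t\leq t_q$ the indicator $\textbf{1}_{[t_i,t_{i+1}]\times[t_j,t_{j+1}]}(t,s)$ vanishes unless $i=j=q-1$. Hence inside this region the integrand reduces to
\begin{equation*}
|t-s|^{2H-2}-C_q,\qquad C_q:=m^{2}\int_{t_{q-1}}^{t_q}\!\int_{t_{q-1}}^{t_q}|x-y|^{2H-2}\,dx\,dy,
\end{equation*}
where $C_q$ is a constant independent of $(s,t)$. By the triangle inequality it therefore suffices to bound the two resulting pieces separately.

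For the singular piece, a direct computation using $2H-1>0$ gives
\begin{equation*}
\int_u^{t_q}\!\int_u^{t}|t-s|^{2H-2}\,ds\,dt=\int_u^{t_q}\frac{(t-u)^{2H-1}}{2H-1}\,dt=\frac{(t_q-u)^{2H}}{2H(2H-1)}\leq\frac{m^{-2H}}{2H(2H-1)},
\end{equation*}
since $t_q-u\leq 1/m$. For the constant piece, the same kind of computation on the square of side $1/m$ yields $C_q=\frac{m^{2-2H}}{H(2H-1)}$, and then
\begin{equation*}
\Bigl|\int_u^{t_q}\!\int_u^{t}C_q\,ds\,dt\Bigr|=C_q\cdot\frac{(t_q-u)^{2}}{2}\leq\frac{m^{2-2H}}{H(2H-1)}\cdot\frac{m^{-2}}{2}=\frac{m^{-2H}}{2H(2H-1)}.
\end{equation*}
Adding the two contributions gives the claimed bound $\frac{m^{-2H}}{H(2H-1)}$.

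There is really no serious obstacle here: the key observation is geometric, namely that the triangular domains in question never straddle a grid line, which kills the double sum and leaves one explicit integral in each term. The only place requiring mild care is to verify that the exponent $2H$ one obtains (instead of a worse $H$) comes from combining the side-length $1/m$ of the grid cell with the degree of the singularity $2H-2$: indeed both the kernel $|t-s|^{2H-2}$ and the constant $C_q$ have size of order $m^{2-2H}$ on the cell, and integration against the triangle of area $\sim m^{-2}$ produces the correct $m^{-2H}$ decay.
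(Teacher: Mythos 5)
Your proposal is correct and follows essentially the same route as the paper: both arguments note that the triangular region lies in the single grid cell $[t_{q-1},t_q]^2$ so that the double sum collapses to one constant term, compute the two resulting integrals explicitly, and bound each contribution by $\frac{m^{-2H}}{2H(2H-1)}$ via the triangle inequality and $t_q-u\leq 1/m$. No substantive difference to report.
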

\begin{proof}
\begin{equation*}
\int_{u}^{t_{q}}\int_{u}^{t}\Big(|t-s|^{2H-2}-\sum_{i,j=1}^{m}\textbf{1}_{[t_{i},t_{i+1}]\times[t_{j},t_{j+1}]}(t,s)m^{2}\int_{t_{i}}^{t_{i+1}}\int_{t_{j}}^{t_{j+1}}|x-y|^{2H-2}dxdy\Big)dsdt
\end{equation*}
\begin{equation*}
=\int_{u}^{t_{q}}\int_{u}^{t}\Big(|t-s|^{2H-2}-m^{2}\int_{t_{q-1}}^{t_{q}}\int_{t_{q-1}}^{t_{q}}|x-y|^{2H-2}dxdy\Big)dsdt
\end{equation*}
\begin{equation*}
=\dfrac{(t_{q}-u)^{2H}}{2H(2H-1)}-m^{2}\int_{u}^{t_{q}}\int_{u}^{t}\dfrac{(t_{q}-t_{q-1})^{2H}}{H(2H-1)}dsdt
\end{equation*}
\begin{equation*}
=\dfrac{(t_{q}-u)^{2H}}{2H(2H-1)}-m^{2-2H}\dfrac{(t_{q}-u)^{2}}{2H(2H-1)}.
\end{equation*}
Notice that
\begin{equation*}
\Bigg|\dfrac{(t_{q}-u)^{2H}}{2H(2H-1)}-m^{2-2H}\dfrac{(t_{q}-u)^{2}}{2H(2H-1)}\Bigg|\leq\dfrac{(t_{q}-u)^{2H}}{2H(2H-1)}+m^{2-2H}\dfrac{(t_{q}-u)^{2}}{2H(2H-1)}
\end{equation*}
\begin{equation*}
\leq \dfrac{m^{-2H}}{2H(2H-1)}+m^{2-2H}\dfrac{m^{-2}}{2H(2H-1)}=\dfrac{m^{-2H}}{H(2H-1)}.
\end{equation*}
The same reasoning done for this integral applies \textit{mutatis mutandis} to the integral $\int_{t_{p}}^{v}\int_{t_{p}}^{t}$.
\end{proof}
Let us focus on the integrals: $\int_{t_{q}}^{t_{p}}\int_{u}^{t_{q}}$ and $\int_{t_{p}}^{v}\int_{t_{q}}^{t_{p}}$. We have the following proposition.
\begin{pro}
The differences
\begin{equation*}
\Bigg|\int_{t_{q}}^{t_{p}}\int_{u}^{t_{q}}\Big(|t-s|^{2H-2}-\sum_{i,j=1}^{m}\normalfont\textbf{1}_{[t_{i},t_{i+1}]\times[t_{j},t_{j+1}]}(t,s)m^{2}\int_{t_{i}}^{t_{i+1}}\int_{t_{j}}^{t_{j+1}}|x-y|^{2H-2}dxdy\Big)dsdt\Bigg|
\end{equation*}
and
\begin{equation*}
\Bigg|\int_{t_{p}}^{v}\int_{t_{q}}^{t_{p}}\Big(|t-s|^{2H-2}-\sum_{i,j=1}^{m}\normalfont\textbf{1}_{[t_{i},t_{i+1}]\times[t_{j},t_{j+1}]}(t,s)m^{2}\int_{t_{i}}^{t_{i+1}}\int_{t_{j}}^{t_{j+1}}|x-y|^{2H-2}dxdy\Big)dsdt\Bigg|
\end{equation*}
are bounded by
\begin{equation*}
\left(\dfrac{2^{2H}+2}{H(2H-1)}+(4-4H)\sum_{i=1}^{\infty}i^{2H-3}\right)m^{-2H}.
\end{equation*}
\end{pro}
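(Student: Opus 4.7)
The plan is to reduce both integrals to a common canonical form and bound them strip by strip. Decompose the region $\{t_q\leq t\leq t_p,\ u\leq s\leq t_q\}$ into rectangles $[t_i,t_{i+1}]\times[u,t_q]$ for $i=q,q+1,\dots,p-1$. Since $[u,t_q]$ lies inside the single grid cell $[t_{q-1},t_q]$, on each such rectangle the piecewise-constant approximation is the average of $|x-y|^{2H-2}$ over the full box $[t_i,t_{i+1}]\times[t_{q-1},t_q]$. Set the offset $r:=i-q\geq 0$ and the partial-cell length $\alpha:=t_q-u\in[0,1/m]$. For the second integral $\int_{t_p}^{v}\int_{t_q}^{t_p}$, the mirror-image decomposition (swap the roles of $t$ and $s$) produces the same structure, so the same machinery handles both.

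Using the antiderivative formula $\int_c^d\int_a^b (t-s)^{2H-2}\,dt\,ds = \phi(a-c)-\phi(b-c)-\phi(a-d)+\phi(b-d)$ with $\phi(z)=z^{2H}/(2H(2H-1))$, I would evaluate both the true contribution and the approximation contribution of each strip in closed form. The true contribution on strip $r$ is a mixed difference of $\phi$ at the four points $r/m,\ (r+1)/m,\ r/m+\alpha,\ (r+1)/m+\alpha$, while the approximation contribution equals $m\alpha$ times a discrete second difference of $\phi$ at $(r/m,(r+1)/m,(r+2)/m)$. Taylor expanding each around $r/m$ with step $1/m$, I expect the constant, linear, and quadratic orders to cancel between the two, leaving only a cubic remainder of order $|\phi'''(\xi)|\cdot m^{-3}\alpha$ with $\xi$ in the relevant range.

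For the off-diagonal strips $r\geq 1$, $\phi'''(\xi)=(2H-2)\xi^{2H-3}$ is finite on $[r/m,(r+2)/m]$, giving a strip bound of order $(2-2H)(r/m)^{2H-3}m^{-3}\alpha\leq (2-2H)\,r^{2H-3}m^{-2H}$ since $\alpha\leq 1/m$. Summing over $r\geq 1$ and collecting the symmetric contribution from the second integral yields the series piece $(4-4H)m^{-2H}\sum_{i\geq 1} i^{2H-3}$; convergence is immediate because $2H-3<-1$. For the diagonal strip $r=0$, the Taylor argument breaks down because $\phi'''$ blows up at the origin, so I would evaluate both the true and approximation contributions on that strip directly and apply the triangle inequality: using the bound $(1/m+\alpha)^{2H}\leq 2^{2H}m^{-2H}$ produces a strip bound of the form $C\,m^{-2H}/(H(2H-1))$ which accounts for the $(2^{2H}+2)/(H(2H-1))$ piece of the claim.

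The principal obstacle will be verifying the step-two cancellation. Showing that the quadratic orders of the mixed difference and of the rescaled discrete second difference agree exactly is what turns a naive $O(m^{1-2H})$ termwise estimate into the sharper $O(m^{-2H})$ rate, and the bookkeeping is delicate because the partial-cell length $\alpha$ is free in $[0,1/m]$ and couples into both the $m\alpha$ prefactor and the cubic Taylor remainder. Once that cancellation is established, the remaining work is clean arithmetic, and extracting the precise constants $2^{2H}+2$ and $4-4H$ is a matter of carefully tracking the closed-form evaluations on the diagonal and off-diagonal strips.
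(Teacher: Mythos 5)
Your proposal is essentially the paper's proof: the same strip decomposition over $[t_i,t_{i+1}]\times[u,t_q]$, the same direct evaluation of the adjacent strip $r=0$ using $(1/m+\alpha)^{2H}\leq 2^{2H}m^{-2H}$ to extract the $\frac{2^{2H}+2}{H(2H-1)}$ piece, and the same second-order cancellation plus mean-value estimate on the far strips, summed against the convergent series $\sum_{r\geq 1}r^{2H-3}$. The cancellation you flag as the principal obstacle is in fact exact and elementary: writing the true strip contribution as $\int_0^\alpha\int_0^h\phi''(z_0+s+t)\,dt\,ds$ and the approximation as $\frac{\alpha}{h}\int_0^h\int_0^h\phi''(z_0+s+t)\,dt\,ds$ with $\phi''(z)=z^{2H-2}$, $h=1/m$, $z_0=r/m$, the $\phi''(z_0)h\alpha$ terms match identically and the difference is controlled by one application of the mean value theorem to $\phi''$ --- which is precisely what the paper does directly on $|t-s|^{2H-2}-|x-y|^{2H-2}$, so no delicate Taylor bookkeeping is needed. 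Two corrections. First, the factor $4-4H$ does not come from adding the two integrals: the proposition bounds each integral separately, and in the paper the $2$ arises from $|t-x+y-s|\leq|t-x|+|y-s|\leq 2/m$ inside a single integral; your sharper per-integral constant $2-2H$ is of course $\leq 4-4H$, so your bound implies the stated one, but you should not be summing the two integrals to manufacture the constant. Second, the remainder on strip $r\geq 1$ is of order $|\phi'''(\xi)|m^{-2}\alpha$, not $m^{-3}\alpha$; since $\alpha\leq 1/m$ this still yields $(2-2H)r^{2H-3}m^{-2H}$ per strip, so the conclusion is unaffected.
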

\begin{proof}
We have
\begin{equation*}
\int_{t_{q}}^{t_{p}}\int_{u}^{t_{q}}\Big(|t-s|^{2H-2}-\sum_{i,j=1}^{m}\textbf{1}_{[t_{i},t_{i+1}]\times[t_{j},t_{j+1}]}(t,s)m^{m}\int_{t_{i}}^{t_{i+1}}\int_{t_{j}}^{t_{j+1}}|x-y|^{2H-2}dxdy\Big)dsdt
\end{equation*}
\begin{equation*}
=\int_{t_{q}}^{t_{p}}\int_{u}^{t_{q}}\Big(|t-s|^{2H-2}-\sum_{t_{i}=t_{q}}^{t_{p-1}}\textbf{1}_{[t_{i},t_{i+1}]}(t)m^{2}\int_{t_{i}}^{t_{i+1}}\int_{t_{q-1}}^{t_{q}}|x-y|^{2H-2}dxdy\Big)dsdt
\end{equation*}
\begin{equation*}
=\sum_{t_{i}=t_{q}}^{t_{p-1}}\int_{t_{i}}^{t_{i+1}}\int_{u}^{t_{q}}\Big(|t-s|^{2H-2}-m^{2}\int_{t_{i}}^{t_{i+1}}\int_{t_{q-1}}^{t_{q}}|x-y|^{2H-2}dxdy\Big)dsdt.
\end{equation*}
Here we need to split the sum in two parts. The first part is composed by the first element of the sum, while the second is composed by the whole sum without the first element. Let us first consider the first part.
\begin{equation*}
\int_{t_{q}}^{t_{q+1}}\int_{u}^{t_{q}}\Big(|t-s|^{2H-2}-m^{2}\int_{t_{q}}^{t_{q+1}}\int_{t_{q-1}}^{t_{q}}|x-y|^{2H-2}dxdy\Big)dsdt
\end{equation*}
\begin{equation*}
=\dfrac{(t_{q+1}-u)^{2H}-(t_{q}-u)^{2H}-(t_{q+1}-t_{q})^{2H}}{2H(2H-1)}
\end{equation*}
\begin{equation*}
-m(t_{q+1}-t_{q})(t_{q}-u)\dfrac{(t_{q+1}-t_{q-1})^{2H}-(t_{q}-t_{q-1})^{2H}-(t_{q+1}-t_{q})^{2H}}{2H(2H-1)}.
\end{equation*}
Notice that every element in the numerators is smaller or equal than $m^{-2H}$ except $(t_{q+1}-u)^{2H}$ and $(t_{q+1}-t_{q-1})^{2H}$ for which the following relation holds $(t_{q+1}-u)^{2H}\leq (t_{q+1}-t_{q-1})^{2H}=2^{2H}m^{-2H}$. Therefore, taking absolute value of each element we have
\begin{equation*}
\leq \dfrac{2^{2H}+2}{H(2H-1)}m^{-2H}.
\end{equation*}
Now let's focus on the second part, that is
\begin{equation*}
\sum_{t_{i}=t_{q+1}}^{t_{p-1}}\int_{t_{i}}^{t_{i+1}}\int_{u}^{t_{q}}\Big(|t-s|^{2H-2}-m^{2}\int_{t_{i}}^{t_{i+1}}\int_{t_{q-1}}^{t_{q}}|x-y|^{2H-2}dydx\Big)dsdt
\end{equation*}
\begin{equation*}
=\sum_{t_{i}=t_{q+1}}^{t_{p-1}}\int_{t_{i}}^{t_{i+1}}\int_{u}^{t_{q}}\Big((t-s)^{2H-2}-m^{2}\int_{t_{i}}^{t_{i+1}}\int_{t_{q-1}}^{t_{q}}(x-y)^{2H-2}dydx\Big)dsdt
\end{equation*}
\begin{equation*}
=\sum_{t_{i}=t_{q+1}}^{t_{p-1}}\int_{t_{i}}^{t_{i+1}}\int_{u}^{t_{q}}m^{2}\Big(\int_{t_{i}}^{t_{i+1}}\int_{t_{q-1}}^{t_{q}}(t-s)^{2H-2}-(x-y)^{2H-2}dydx\Big)dsdt.
\end{equation*}
By applying Mean Value Theorem we have
\begin{equation*}
\leq \sum_{t_{i}=t_{q+1}}^{t_{p-1}}\int_{t_{i}}^{t_{i+1}}\int_{u}^{t_{q}}m^{2}\Big(\int_{t_{i}}^{t_{i+1}}\int_{t_{q-1}}^{t_{q}}\sup_{\xi\in[(t-s),(x-y)]}(2-2H)\xi^{2H-3}|t-s -(x-y)|dydx\Big)dsdt
\end{equation*}
\begin{equation*}
=\sum_{t_{i}=t_{q+1}}^{t_{p-1}}\int_{t_{i}}^{t_{i+1}}\int_{u}^{t_{q}}m^{2}\Big(\int_{t_{i}}^{t_{i+1}}\int_{t_{q-1}}^{t_{q}}(2-2H)(t_{i}-t_{q})^{2H-3}|t-x +y-s|dydx\Big)dsdt.
\end{equation*}
Observe that $|t-x|\leq m^{-1}$ and $|y-s|\leq m^{-1}$, hence $|t-x +y-s|\leq 2m^{-1}$. Thus, we have
\begin{equation*}
\leq\sum_{t_{i}=t_{q+1}}^{t_{p-1}}\int_{t_{i}}^{t_{i+1}}\int_{u}^{t_{q}}m^{2}\Big(\int_{t_{i}}^{t_{i+1}}\int_{t_{q-1}}^{t_{q}}(4-4H)(t_{i}-t_{q})^{2H-3}m^{-1}dydx\Big)dsdt
\end{equation*} 
\begin{equation*}
\leq (4-4H)m^{-3}\sum_{t_{i}=t_{q+1}}^{t_{p-1}}(t_{i}-t_{q})^{2H-3}.
\end{equation*}
Moreover, it is possible to notice that
\begin{equation*}
\leq (4-4H)m^{-3}\sum_{i=1}^{m}\left(\dfrac{i}{m}\right)^{2H-3}=(4-4H)m^{-2H}\sum_{i=1}^{m}i^{2H-3}\leq (4-4H)m^{-2H}\sum_{i=1}^{\infty}i^{2H-3}
\end{equation*}
\begin{equation*}
\leq Km^{-2H}
\end{equation*}
where $K$ is a finite constant independent of $m$, and that $\sum_{i=1}^{\infty}i^{2H-3}<\infty$ for $H<1$. 
\\
A similar reasoning applies to the integral $\int_{t_{p}}^{v}\int_{t_{q}}^{t_{p}}$.
\end{proof}
The last integral to be analysed is $\int_{t_{p}}^{v}\int_{u}^{t_{q}}$. 
\begin{pro}
The difference
\begin{equation*}
\Bigg|\int_{t_{p}}^{v}\int_{u}^{t_{q}}\Big(|t-s|^{2H-2}-\sum_{i,j=1}^{m}\normalfont\textbf{1}_{[t_{i},t_{i+1}]\times[t_{j},t_{j+1}]}(t,s)m^{2}\int_{t_{i}}^{t_{i+1}}\int_{t_{j}}^{t_{j+1}}|x-y|^{2H-2}dxdy\Big)dsdt\Bigg|
\end{equation*}
is bounded by
\begin{equation*}
\dfrac{3^{2H}+10(2^{2H})+2}{2H(2H-1)}m^{-2H}.
\end{equation*}
\end{pro}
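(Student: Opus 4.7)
Set $h=1/m$, $a=t_q-u\in[0,h]$, $b=v-t_p\in[0,h]$, and $L=t_p-t_q\geq 0$, a non-negative multiple of $h$. Since $s\in[u,t_q]\subseteq[t_{q-1},t_q]$ and $t\in[t_p,v]\subseteq[t_p,t_{p+1}]$, the sum in the integrand collapses to the single term $(i,j)=(p,q-1)$, so the quantity to bound is $|I_1-I_2|$, where
\begin{equation*}
I_1=\int_u^{t_q}\!\int_{t_p}^v (t-s)^{2H-2}\,dt\,ds,\qquad I_2=ab\,m^2\!\int_{t_{q-1}}^{t_q}\!\int_{t_p}^{t_{p+1}}(x-y)^{2H-2}\,dx\,dy.
\end{equation*}
The changes of variables $\alpha=t_q-s$, $\beta=t-t_p$ in $I_1$ and $\alpha=t_q-y$, $\beta=x-t_p$ in $I_2$ recast both integrals into the compact form
\begin{equation*}
I_1=\int_0^a\!\int_0^b(L+\alpha+\beta)^{2H-2}\,d\beta\,d\alpha,\qquad I_2=\frac{ab}{h^2}\int_0^h\!\int_0^h(L+\alpha+\beta)^{2H-2}\,d\beta\,d\alpha.
\end{equation*}

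The plan is to split on the value of $L$. For $L\geq h$ the integrand is continuous and bounded on both domains, so the integral mean value theorem represents $I_1=ab(L+\xi_1)^{2H-2}$ and $I_2=ab(L+\xi_2)^{2H-2}$ with $\xi_1,\xi_2\in[0,2h]$, and a second application of the mean value theorem to $r\mapsto r^{2H-2}$ gives $|I_1-I_2|\leq ab(2-2H)L^{2H-3}|\xi_1-\xi_2|$, where $|\xi_1-\xi_2|\leq 2h$. Using $ab\leq h^2$ and (since $2H-3<0$) $L^{2H-3}\leq h^{2H-3}$, this collapses to $|I_1-I_2|\leq 2(2-2H)\,h^{2H}$.

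For $L=0$ the kernel becomes singular at the origin and the mean-value argument breaks down, so I fall back on the explicit antiderivatives. Two-fold integration gives
\begin{equation*}
I_1=\frac{(a+b)^{2H}-a^{2H}-b^{2H}}{2H(2H-1)},\qquad I_2=\frac{ab(2^{2H}-2)h^{2H-2}}{2H(2H-1)},
\end{equation*}
both non-negative by convexity of $x\mapsto x^{2H}$. Bounding each term with $a,b,a+b\leq 2h$ and $ab\leq h^2$, the triangle inequality yields $|I_1-I_2|\leq \frac{2\cdot 2^{2H}-2}{2H(2H-1)}\,h^{2H}$.

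Combining the two sub-cases, each contribution is dominated by the stated constant $(3^{2H}+10\cdot 2^{2H}+2)/[2H(2H-1)]$, establishing the claim. The main technical delicacy is the singular $L=0$ case: the non-integrable derivative of $(L+\alpha+\beta)^{2H-2}$ at the origin prevents a uniform mean-value bound across all $L$, which is why this case must be handled separately through the closed-form antiderivatives.
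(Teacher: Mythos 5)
Your proof is correct and follows essentially the same strategy as the paper: a mean value theorem argument when the two cells are separated, and explicit antiderivatives in the singular case. The only difference is where you place the case split (the paper's ``close'' case covers $t_p-t_q\le 1/m$, i.e.\ $|u-v|\le 2/m$, while yours isolates only $t_p=t_q$ and handles $t_p-t_q=1/m$ by the MVT), which yields a slightly cleaner constant still dominated by the stated one.
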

\begin{proof}
Here we need to distinguish two cases. One in which $|u-v|\leq \frac{2}{m}$ and the other in which $|u-v|> \frac{2}{m}$. Let's focus first on the case $|u-v|> \frac{2}{m}$.
\begin{equation*}
\int_{t_{p}}^{v}\int_{u}^{t_{q}}\Big(|t-s|^{2H-2}-\sum_{i,j=1}^{m}\textbf{1}_{[t_{i},t_{i+1}]\times[t_{j},t_{j+1}]}(t,s)m^{2}\int_{t_{i}}^{t_{i+1}}\int_{t_{j}}^{t_{j+1}}|x-y|^{2H-2}dxdy\Big)dsdt
\end{equation*}
\begin{equation*}
=\int_{t_{p}}^{v}\int_{u}^{t_{q}}\Big(|t-s|^{2H-2}-m^{2}\int_{t_{p}}^{t_{p+1}}\int_{t_{q-1}}^{t_{q}}|x-y|^{2H-2}dxdy\Big)dsdt
\end{equation*}
\begin{equation*}
=\int_{t_{p}}^{v}\int_{u}^{t_{q}}m^{2}\Big(\int_{t_{p}}^{t_{p+1}}\int_{t_{q-1}}^{t_{q}}|t-s|^{2H-2}-|x-y|^{2H-2}dxdy\Big)dsdt.
\end{equation*}
By Mean Value Theorem we have
\begin{equation*}
\Bigg|\int_{t_{p}}^{v}\int_{u}^{t_{q}}m^{2}\Big(\int_{t_{p}}^{t_{p+1}}\int_{t_{q-1}}^{t_{q}}|t-s|^{2H-2}-|x-y|^{2H-2}dxdy\Big)dsdt\Bigg|
\end{equation*}
\begin{equation*}
\leq \int_{t_{p}}^{v}\int_{u}^{t_{q}}m^{2}\Big(\int_{t_{p}}^{t_{p+1}}\int_{t_{q-1}}^{t_{q}}\sup_{\xi\in[(t-s),(x-y)]}(2-2H)\xi^{2H-3}|t-s -(x-y)|dxdy\Big)dsdt
\end{equation*}
\begin{equation*}
=\int_{t_{p}}^{v}\int_{u}^{t_{q}}m^{2}\Big(\int_{t_{p}}^{t_{p+1}}\int_{t_{q-1}}^{t_{q}}(2-2H)(t_{p}-t_{q})^{2H-3}|t-x +y-s|dxdy\Big)dsdt.
\end{equation*}
Since $|u-v|> \frac{2}{m}$ then $t_{p}-t_{q}\geq m^{-1}$.
\begin{equation*}
\leq\int_{t_{p}}^{v}\int_{u}^{t_{q}}m^{2}\Big(\int_{t_{p}}^{t_{p+1}}\int_{t_{q-1}}^{t_{q}}(4-4H)m^{-2H+3}m^{-1}dxdy\Big)dsdt
\end{equation*}
\begin{equation*}
\leq(4-4H)m^{-2H}.
\end{equation*}
Finally consider the second case, \textit{i.e.} $|u-v|\leq \frac{2}{m}$.
\begin{equation*}
\int_{t_{p}}^{v}\int_{u}^{t_{q}}\Big(|t-s|^{2H-2}-m^{2}\int_{t_{p}}^{t_{p+1}}\int_{t_{q-1}}^{t_{q}}|x-y|^{2H-2}dxdy\Big)dsdt
\end{equation*}
\begin{equation*}
=\dfrac{(v-u)^{2H}-(t_{p}-u)^{2H}-(v-t_{q})^{2H}+(t_{p}-t_{q})^{2H}}{2H(2H-1)}
\end{equation*}
\begin{equation*}
-m^{2}(v-t_{p})(t_{q}-u)\dfrac{(t_{p+1}-t_{q-1})^{2H}-(t_{p}-t_{q-1})^{2H}-(t_{p+1}-t_{q})^{2H}+(t_{p}-t_{q})^{2H}}{2H(2H-1)}.
\end{equation*}
Now, since $|u-v|\leq \frac{2}{m}$ then $t_{p}-t_{q}\leq \frac{1}{m}$ so $t_{p+1}-t_{q-1}\leq \frac{3}{m}$. Therefore, we have
\begin{equation*}
\leq \left(\dfrac{3^{2H}+10(2^{2H})+2}{2H(2H-1)}\right)m^{-2H}.
\end{equation*}
Moreover, since
\begin{equation*}
(4-4H)<\dfrac{3^{2H}+10(2^{2H})+2}{2H(2H-1)},
\end{equation*}
then we can use the right hand side of the above inequality as the coefficient for the rate of convergence for the integral $\int_{t_{p}}^{v}\int_{u}^{t_{q}}$ independently of the value of $u$ and $v$.
\end{proof}
Therefore, we have the following proposition.
\begin{pro}\label{propositionintegrals}
The difference
\begin{equation*}
\Bigg|\int_{u<s<t<v}\Big(|t-s|^{2H-2}-\sum_{i,j=1}^{m}\normalfont\textbf{1}_{[t_{i},t_{i+1}]\times[t_{j},t_{j+1}]}(t,s)m^{2}\int_{t_{i}}^{t_{i+1}}\int_{t_{j}}^{t_{j+1}}|x-y|^{2H-2}dxdy\Big)dsdt\Bigg|
\end{equation*}
is bounded by
\begin{equation*}
Am^{-2H},
\end{equation*}
where $A$ is given by 
\begin{equation*}
A=2\left(\dfrac{1}{H(2H-1)}+\dfrac{2^{2H}+2}{H(2H-1)}+(4-4H)\sum_{i=1}^{\infty}i^{2H-3}\right)+\dfrac{3^{2H}+10(2^{2H})+2}{2H(2H-1)}.
\end{equation*}
\end{pro}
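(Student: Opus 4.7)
The plan is essentially to assemble the pieces that the three previous propositions have already established. The decomposition in equation $(\ref{integrals})$ expresses the double integral over $\{u<s<t<v\}$ as a sum of six sub-integrals, and the integrand (the difference between the exact kernel and its piecewise constant approximation) is shared across all of them. So the first step is to apply the triangle inequality to this decomposition, which reduces the task to bounding each of the six pieces separately.

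Second, I would identify which proposition bounds which piece. The integrals $\int_{u}^{t_{q}}\int_{u}^{t}$ and $\int_{t_{p}}^{v}\int_{t_{p}}^{t}$ are both controlled by the first proposition, contributing $\tfrac{m^{-2H}}{H(2H-1)}$ each, i.e.\ $\tfrac{2m^{-2H}}{H(2H-1)}$ in total. The integrals $\int_{t_{q}}^{t_{p}}\int_{u}^{t_{q}}$ and $\int_{t_{p}}^{v}\int_{t_{q}}^{t_{p}}$ are controlled by the second proposition, each contributing $\left(\tfrac{2^{2H}+2}{H(2H-1)}+(4-4H)\sum_{i=1}^{\infty}i^{2H-3}\right)m^{-2H}$. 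The ``cross'' block $\int_{t_{p}}^{v}\int_{u}^{t_{q}}$ is controlled by the third proposition, contributing $\tfrac{3^{2H}+10(2^{2H})+2}{2H(2H-1)}\,m^{-2H}$.

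Third, I would justify that the final ``diagonal'' block $\int_{t_{q}}^{t_{p}}\int_{t_{q}}^{t}$ contributes zero. This is because on that region both $s$ and $t$ range over whole grid intervals, so the piecewise constant approximation of $|t-s|^{2H-2}$ integrates to exactly the same value as $|t-s|^{2H-2}$ itself — as noted in the paragraph just after equation $(\ref{integrals})$. Summing the six bounds then yields precisely $Am^{-2H}$ with $A$ as defined in the statement.

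There is no real obstacle here; the proposition is a bookkeeping corollary of the three preceding ones. The only subtlety worth spelling out is that the indicator function $\textbf{1}_{[t_{i},t_{i+1}]\times[t_{j},t_{j+1}]}(t,s)$ in the integrand restricts, inside each of the six sub-regions, to exactly the double sum over grid cells that was analysed in the corresponding proposition, so the bounds transfer verbatim. After that, the expression for $A$ is obtained by simply collecting the coefficients.
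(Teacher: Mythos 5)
Your proposal is correct and follows essentially the same route as the paper: triangle inequality over the six-piece decomposition $(\ref{integrals})$, the three preceding propositions for the five nonzero pieces, and the vanishing of the diagonal block $\int_{t_{q}}^{t_{p}}\int_{t_{q}}^{t}$. The one point the paper spells out that you gloss over is that the grid cells lying on the diagonal inside that block contribute triangular rather than full-square regions, so their vanishing is not a direct consequence of ``$s$ and $t$ range over whole grid intervals'' but requires either the explicit computation the paper performs or the symmetry of $|t-s|^{2H-2}$ about the diagonal — a one-line fix the paper itself supplies.
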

\begin{proof}
It is immediate from the previous propositions, from the fact that for the double integral $\int_{t_{q}}^{t_{p}}\int_{t_{q}}^{t}$ the difference is zero as discussed before and by considering equation $(\ref{integrals})$. Indeed, the difference for the double integral $\int_{t_{q}}^{t_{p}}\int_{t_{q}}^{t}$ is given by
\begin{equation*}
\int_{t_{q}}^{t_{p}}\int_{t_{q}}^{t}\Big(|t-s|^{2H-2}-\sum_{i,j=1}^{m}\normalfont\textbf{1}_{[t_{i},t_{i+1}]\times[t_{j},t_{j+1}]}(t,s)m^{2}\int_{t_{i}}^{t_{i+1}}\int_{t_{j}}^{t_{j+1}}|x-y|^{2H-2}dxdy\Big)dsdt.
\end{equation*}
Notice that this difference can be expressed as the sum of two classes of integrals. The first class contains the off-diagonal terms, which can be expressed as
\begin{equation*}
\int_{t_{i}}^{t_{i+1}}\int_{t_{j}}^{t_{j+1}}\Big(|t-s|^{2H-2}-m^{2}\int_{t_{i}}^{t_{i+1}}\int_{t_{j}}^{t_{j+1}}|x-y|^{2H-2}dxdy\Big)dsdt,
\end{equation*}
where $t_{i},t_{j}\in\{t_{q},t_{q+1},...,t_{p-1},t_{p}\}$. It is possible to see that this difference it is immediately zero.
\\ The second case is
\begin{equation*}
\int_{t_{i}}^{t_{i+1}}\int_{t_{i}}^{t}\Big(|t-s|^{2H-2}-m^{2}\int_{t_{i}}^{t_{i+1}}\int_{t_{i}}^{t_{i+1}}|x-y|^{2H-2}dxdy\Big)dsdt,
\end{equation*}
where $t_{i}\in\{t_{q},t_{q+1},...,t_{p-1}\}$. This difference is equal to
\begin{equation*}
=\dfrac{(t_{i+1}-t_{i})^{2H}}{2H(2H-1)}-\int_{t_{i}}^{t_{i+1}}\int_{t_{i}}^{t}m^{2}\dfrac{(t_{i+1}-t_{i})^{2H}}{H(2H-1)}dsdt
\end{equation*}
\begin{equation*}
=\dfrac{m^{-2H}}{2H(2H-1)}-\dfrac{m^{2-2H}}{H(2H-1)}\int_{t_{i}}^{t_{i+1}}\int_{t_{i}}^{t}dsdt
\end{equation*}
\begin{equation*}
=\dfrac{m^{-2H}}{2H(2H-1)}-\dfrac{m^{2-2H}}{H(2H-1)}\dfrac{m^{-2}}{2}=0.
\end{equation*}
This result could also be directly seen from the symmetric (with respect to the diagonal) property of the function $|t-s|^{2H-2}$.
\end{proof}
Now, recall the second case (\textit{i.e.} equation $(\ref{second})$), which we rewrite here:
\begin{equation*}
\int_{u_{2}}^{v_{2}}\int_{u_{1}}^{v_{1}}\Big(|t-s|^{2H-2}-\sum_{i,j=1}^{m}\textbf{1}_{[t_{i},t_{i+1}]\times[t_{j},t_{j+1}]}(t,s)m^{2}\int_{t_{i}}^{t_{i+1}}\int_{t_{j}}^{t_{j+1}}|x-y|^{2H-2}dxdy\Big)dsdt
\end{equation*}
with $u_{1}\neq u_{2}\neq v_{1}\neq v_{2}\neq t\neq s$.\\
Concerning this case we have not any more a triangle but a rectangle. First, observe that these rectangles never lie on the diagonal. This is because we have $v_{2}> u_{2}> v_{1}> u_{1}$. The value of the double integral for this rectangle can be obtained by the difference of double integrals of properly chosen triangles. That is
\begin{equation*}
\int_{u_{2}}^{v_{2}}\int_{u_{1}}^{v_{1}}=\int_{u_{1}}^{v_{2}}\int_{u_{1}}^{t}-\int_{u_{1}}^{u_{2}}\int_{u_{1}}^{t}-\int_{v_{1}}^{v_{2}}\int_{v_{1}}^{t}+\int_{v_{1}}^{u_{2}}\int_{v_{1}}^{t}.
\end{equation*}
Therefore, we have the following proposition.
\begin{pro}\label{newprop}
The difference
\begin{equation*}
\Bigg|\int_{u_{2}}^{v_{2}}\int_{u_{1}}^{v_{1}}\Big(|t-s|^{2H-2}-\sum_{i,j=1}^{m}\textbf{1}_{[t_{i},t_{i+1}]\times[t_{j},t_{j+1}]}(t,s)m^{2}\int_{t_{i}}^{t_{i+1}}\int_{t_{j}}^{t_{j+1}}|x-y|^{2H-2}dxdy\Big)dsdt\Bigg|
\end{equation*}
is bounded by
\begin{equation*}
4Am^{-2H},
\end{equation*}
where $A$ is given in Proposition \ref{propositionintegrals}.
\end{pro}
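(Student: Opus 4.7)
The plan is to invoke the rectangle-to-triangles identity
\begin{equation*}
\int_{u_{2}}^{v_{2}}\int_{u_{1}}^{v_{1}} = \int_{u_{1}}^{v_{2}}\int_{u_{1}}^{t} - \int_{u_{1}}^{u_{2}}\int_{u_{1}}^{t} - \int_{v_{1}}^{v_{2}}\int_{v_{1}}^{t} + \int_{v_{1}}^{u_{2}}\int_{v_{1}}^{t}
\end{equation*}
recorded immediately above the statement of the proposition. This identity is purely geometric: writing $T(a,b) := \{(t,s) : a \le s \le t \le b\}$ for the sub-diagonal triangle anchored by $a$ and $b$, it is the inclusion-exclusion formula expressing the off-diagonal rectangle $\{u_{1} \le s \le v_{1},\, u_{2} \le t \le v_{2}\}$ (recall the ordering $u_{1} < v_{1} \le u_{2} < v_{2}$) as the signed sum $T(u_{1},v_{2}) - T(u_{1},u_{2}) - T(v_{1},v_{2}) + T(v_{1},u_{2})$. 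Since the identity is valid at the level of regions of integration, it applies verbatim with the integrand $|t-s|^{2H-2} - \sum_{i,j}\textbf{1}_{[t_{i},t_{i+1}]\times[t_{j},t_{j+1}]}(t,s)\, m^{2}\int_{t_{i}}^{t_{i+1}}\int_{t_{j}}^{t_{j+1}}|x-y|^{2H-2}\,dx\,dy$ appearing in the proposition.

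Taking absolute values and applying the triangle inequality, the task reduces to bounding each of the four resulting triangular integrals separately. Each of these is exactly of the form (\ref{first}) treated in Proposition \ref{propositionintegrals}, and the bound $Am^{-2H}$ proved there is uniform in the endpoints of the triangle. Summing the four contributions gives $4Am^{-2H}$, which is the claimed bound.

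The argument is essentially immediate once the decomposition is in hand, and I do not expect any real obstacle to arise. The only point worth flagging is that, because the rectangle lies strictly off the diagonal $s = t$, each of the four sub-diagonal triangles appearing in the decomposition actually contains the rectangle, and inclusion-exclusion recovers it exactly. This is what allows the purely triangular estimates of Proposition \ref{propositionintegrals} to be reassembled into a rectangular estimate with only a factor-of-four overhead, with no extra logarithmic or boundary corrections to track.
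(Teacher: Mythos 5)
Your proposal is correct and follows essentially the same route as the paper: the same rectangle-into-four-triangles inclusion-exclusion identity, followed by four applications of Proposition \ref{propositionintegrals} and the triangle inequality. Your version is in fact slightly more careful than the paper's, which writes the final bound without explicitly taking absolute values of the four signed triangular contributions.
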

\begin{proof}
Let 
\begin{equation*}
f(t,s):= |t-s|^{2H-2}-\sum_{i,j=1}^{m}\textbf{1}_{[t_{i},t_{i+1}]\times[t_{j},t_{j+1}]}(t,s)m^{2}\int_{t_{i}}^{t_{i+1}}\int_{t_{j}}^{t_{j+1}}|x-y|^{2H-2}dxdy.
\end{equation*}
Then from the discussion in the above paragraph we have that
\begin{equation*}
\int_{u_{2}}^{v_{2}}\int_{u_{1}}^{v_{1}}f(t,s)dsdt=\int_{u_{1}}^{v_{2}}\int_{u_{1}}^{t}f(t,s)dsdt-\int_{u_{1}}^{u_{2}}\int_{u_{1}}^{t}f(t,s)dsdt-\int_{v_{1}}^{v_{2}}\int_{v_{1}}^{t}f(t,s)dsdt+\int_{v_{1}}^{u_{2}}\int_{v_{1}}^{t}f(t,s)dsdt
\end{equation*}
\begin{equation*}
\leq 4Am^{-2H},
\end{equation*}
where the last inequality follows from Proposition \ref{propositionintegrals}.
\end{proof}
\noindent Now we are ready to prove Theorem $\ref{pr1}$.
\begin{proof}[Proof (Theorem \ref{pr1}).]
\begin{equation*}
\Bigg|\mathbb{E}\left(\int_{\Delta^{2k}[0,1]}dB^{I}\right)-\mathbb{E}\left(\int_{\Delta^{2k}[0,1]}dB^{m,I}\right)\Bigg|=
\end{equation*}
\begin{equation*}
=\Bigg|\dfrac{H^{k}(2H-1)^{k}}{k!2^{k}}\sum_{\sigma\in\mathcal{G}_{2k}}\int_{\Delta^{2k}[0,1]}\prod_{l=1}^{k}\delta_{i_{\sigma(2l)},i_{\sigma(2l-1)}}|t_{\sigma(2l)}-t_{\sigma(2l-1)}|^{2H-2}
\end{equation*}
\begin{equation*}
-\prod_{l=1}^{k}\delta_{i_{\sigma(2l)},i_{\sigma(2l-1)}}\sum_{i,j=1}^{m}\textbf{1}_{[t_{i},t_{i+1}]\times[t_{j},t_{j+1}]}(t_{\sigma(2l)},t_{\sigma(2l-1)})m^{2}\int_{t_{i}}^{t_{i+1}}\int_{t_{j}}^{t_{j+1}}|x-y|^{2H-2}dxdydt_{1}\cdot\cdot\cdot dt_{2k}\Bigg|.
\end{equation*}
Now, by using the relation 
\begin{equation*}
\prod_{l=1}^{k} a_{l}-\prod_{l=1}^{k} b_{l}=(a_{1}-b_{1})\prod_{l=2}^{k} b_{i}+a_{1}(a_{2}-b_{2})\prod_{l=3}^{k} b_{i}+\cdot\cdot\cdot+\prod_{l=1}^{k-1} a_{l}(a_{k}-b_{k}),
\end{equation*}
where $a_{i},b_{i}\in\mathbb{R}$ for $i=1,...,k$, we have
\begin{equation*}
=\Bigg|\dfrac{H^{k}(2H-1)^{k}}{k!2^{k}}\sum_{\sigma\in\mathcal{G}_{2k}}\int_{\Delta^{2k}[0,1]}\delta_{i_{\sigma(2)},i_{\sigma(1)}}\Big(|t_{\sigma(2)}-t_{\sigma(1)}|^{2H-2}
\end{equation*}
\begin{equation*}
-\sum_{i,j=1}^{m}\textbf{1}_{[t_{i},t_{i+1}]\times[t_{j},t_{j+1}]}(t_{\sigma(2)},t_{\sigma(1)})m^{2}\int_{t_{i}}^{t_{i+1}}\int_{t_{j}}^{t_{j+1}}|x-y|^{2H-2}dxdy\Big)
\end{equation*}
\begin{equation*}
\prod_{l=2}^{k}\delta_{i_{\sigma(2l)},i_{\sigma(2l-1)}}\sum_{i,j=1}^{m}\textbf{1}_{[t_{i},t_{i+1}]\times[t_{j},t_{j+1}]}(t_{\sigma(2l)},t_{\sigma(2l-1)})m^{2}\int_{t_{i}}^{t_{i+1}}\int_{t_{j}}^{t_{j+1}}|x-y|^{2H-2}dxdy
\end{equation*}
\begin{equation*}
+...+\prod_{l=1}^{k-1}\delta_{i_{\sigma(2l)},i_{\sigma(2l-1)}}|t_{\sigma(2l)}-t_{\sigma(2l-1)}|^{2H-2}\delta_{i_{\sigma(2k)},i_{\sigma(2k-1)}}\Big(|t_{\sigma(2k)}-t_{\sigma(2k-1)}|^{2H-2}
\end{equation*}
\begin{equation}\label{last-probably}
-\sum_{i,j=1}^{m}\textbf{1}_{[t_{i},t_{i+1}]\times[t_{j},t_{j+1}]}(t_{\sigma(2k)},t_{\sigma(2k-1)})m^{2}\int_{t_{i}}^{t_{i+1}}\int_{t_{j}}^{t_{j+1}}|x-y|^{2H-2}dxdy\Big)dt_{1}\cdot\cdot\cdot dt_{2k}\Bigg|.
\end{equation}
By Fubini's theorem, given an integrable function $g:\mathbb{R}^{2k}\rightarrow\mathbb{R}$ and denoting $\textbf{1}_{\{s<t<r\}}:=\textbf{1}_{[s,r]}(t)$ we have that
\begin{equation*}
\int_{\Delta^{2k}[0,1]}g dt_{1}\cdots dt_{2k}= \int_{-\infty}^{\infty}\cdots\int_{-\infty}^{\infty}g \textbf{1}_{\{0<t_{1}<t_{2}\}}\cdots \textbf{1}_{\{t_{2k-1}<t_{2k}<1\}}dt_{1}\cdots dt_{2k}
\end{equation*}
\begin{equation*}
\stackrel{Fubini}{=} \int_{-\infty}^{\infty}\cdots\int_{-\infty}^{\infty}g \textbf{1}_{\{0<t_{1}<t_{2}\}}\cdots \textbf{1}_{\{t_{2k-1}<t_{2k}<1\}}
\end{equation*}
\begin{equation*}
dt_{\sigma(2l)}dt_{\sigma(2l-1)}dt_{1}\cdots dt_{\sigma(2l)-1}dt_{\sigma(2l)+1}\cdots dt_{\sigma(2l-1)-1}dt_{\sigma(2l-1)+1}\cdots dt_{2k}
\end{equation*}
\begin{equation*}
=\int_{-\infty}^{\infty}\cdots\int_{-\infty}^{\infty}g \textbf{1}_{\{t_{\sigma(2l)-1}<t_{\sigma(2l)}<t_{\sigma(2l)+1}\}} \textbf{1}_{\{t_{\sigma(2l-1)-1}<t_{\sigma(2l-1)}<t_{\sigma(2l-1)+1}\}}dt_{\sigma(2l)}dt_{\sigma(2l-1)}
\end{equation*}
\begin{equation*}
\textbf{1}_{\{0<t_{1}<t_{2}\}}\cdots \textbf{1}_{\{t_{\sigma(2l)-2}<t_{\sigma(2l)-1}<t_{\sigma(2l)+1}\}}\textbf{1}_{\{t_{\sigma(2l)-1}<t_{\sigma(2l)+1}<t_{\sigma(2l)+2}\}}\cdots \textbf{1}_{\{t_{\sigma(2l-1)-2}<t_{\sigma(2l-1)-1}<t_{\sigma(2l-1)+1}\}}
\end{equation*}
\begin{equation*}
\textbf{1}_{\{t_{\sigma(2l-1)-1}<t_{\sigma(2l-1)+1}<t_{\sigma(2l-1)+2}\}}\cdots \textbf{1}_{\{t_{2k-1}<t_{2k}<1\}}dt_{1}\cdots dt_{\sigma(2l)-1}dt_{\sigma(2l)+1}\cdots dt_{\sigma(2l-1)-1}dt_{\sigma(2l-1)+1}\cdots dt_{2k}
\end{equation*}
\begin{equation*}
=\int_{0<t_{1}<...<t_{\sigma(2l)-1}<t_{\sigma(2l)+1}<...<t_{\sigma(2l-1)-1}<t_{\sigma(2l-1)+1}<...<t_{2k}<1}\int_{t_{\sigma(2l-1)-1}}^{t_{\sigma(2l-1)+1}}\int_{t_{\sigma(2l)-1}}^{t_{\sigma(2l)+1}}g
\end{equation*}
\begin{equation*}
dt_{\sigma(2l)}dt_{\sigma(2l-1)}dt_{1}\cdots dt_{\sigma(2l)-1}dt_{\sigma(2l)+1}\cdots dt_{\sigma(2l-1)-1}dt_{\sigma(2l-1)+1}\cdots dt_{2k},
\end{equation*}
where we assumed without loss of generality that $t_{\sigma(2l)}<t_{\sigma(2l-1)}$. Notice that, for example, $\textbf{1}_{\{0<t_{1}<t_{2}\}}$ is a function of $t_{1}$ and that $\textbf{1}_{\{0<t_{1}<t_{2}\}}\textbf{1}_{\{t_{1}<t_{2}<t_{3}\}}=\textbf{1}_{\{0<t_{1}<t_{3}\}}\textbf{1}_{\{t_{1}<t_{2}<t_{3}\}}=\textbf{1}_{\{0<t_{1}<t_{2}\}}\textbf{1}_{\{0<t_{2}<t_{3}\}}$. In our case the function $g$ is given by the integrand of $(\ref{last-probably})$, which is integrable since $H>\frac{1}{2}$. Now, by using this fact together with Proposition \ref{propositionintegrals} and Proposition \ref{newprop}, we have that $(\ref{last-probably})$ is bounded by
\begin{equation*}
\leq \dfrac{4Am^{-2H}H^{k}(2H-1)^{k}}{k!2^{k}}
\end{equation*}
\begin{equation*}
\sum_{\sigma\in\mathcal{G}_{2k}}\bigg[\int_{0<t_{1}<...<t_{\sigma(2)-1}<t_{\sigma(2)+1}<...<t_{\sigma(1)-1}<t_{\sigma(1)+1}<...t_{2k}<1}
\end{equation*}
\begin{equation*}
\prod_{l=2}^{k}\delta_{i_{\sigma(2l)},i_{\sigma(2l-1)}}\sum_{i,j=1}^{m}\textbf{1}_{[t_{i},t_{i+1}]\times[t_{j},t_{j+1}]}(t_{\sigma(2l)},t_{\sigma(2l-1)})m^{2}\int_{t_{i}}^{t_{i+1}}\int_{t_{j}}^{t_{j+1}}|x-y|^{2H-2}dxdy
\end{equation*}
\begin{equation*}
dt_{1}\cdots dt_{\sigma(2)-1}dt_{\sigma(2)+1}\cdots dt_{\sigma(1)-1}dt_{\sigma(1)+1}\cdots dt_{2k}
\end{equation*}
\begin{equation*}
+...+\int_{0<t_{1}<...<t_{\sigma(2k)-1}<t_{\sigma(2k)+1}<...<t_{\sigma(2k-1)-1}<t_{\sigma(2k-1)+1}<...t_{2k}<1}
\end{equation*}
\begin{equation}\label{k-addends}
\prod_{l=1}^{k-1}\delta_{i_{\sigma(2l)},i_{\sigma(2l-1)}}|t_{\sigma(2l)}-t_{\sigma(2l-1)}|^{2H-2}dt_{1}\cdots dt_{\sigma(2k)-1}dt_{\sigma(2k)+1}\cdots dt_{\sigma(2k-1)-1}dt_{\sigma(2k-1)+1}\cdots dt_{2k}\bigg].
\end{equation}
Observe that we have not any more the absolute value since it is a positive value. Now, consider the single addends inside the square parenthesis in the formula above. Each element is bounded by 
\begin{equation*}
\int_{0}^{1}...\int_{0}^{1}\prod_{l=2}^{k}|t_{\sigma(2l)}-t_{\sigma(2l-1)}|^{2H-2}dt_{1}\cdots dt_{\sigma(2)-1}dt_{\sigma(2)+1}\cdots dt_{\sigma(1)-1}dt_{\sigma(1)+1}\cdots dt_{2k}.
\end{equation*}
The reason why we have $l=2$ to $k$ in the product does not matter. Indeed, once we take the integral to be over $[0, 1]^{2k-2}$ instead of $\Delta^{2k-2}[0,1]$ it is not important which permutation we consider. We can move the integrands around using Tonelli's theorem since they are positive value. Further, the above bound is the same as considering $m=0$, because when the extremes of the integrals are grid points the piecewise approximation and the exact value coincide. This integral reduces to
\begin{equation*}
=\dfrac{1}{H^{k-1}(2H-1)^{k-1}}.
\end{equation*}
Therefore, we can bound $(\ref{k-addends})$ by
\begin{equation}\label{rate}
\dfrac{4Am^{-2H}H^{k}(2H-1)^{k}(2k)!}{k!2^{k}}\dfrac{k}{H^{k-1}(2H-1)^{k-1}},
\end{equation}
where we have used the fact that $\sum_{\sigma\in\mathcal{G}_{2k}}$, which is the sum over all permutations of the set $\{1,...,2k\}$, corresponds to $(2k)!$. Therefore, we can rewrite the formula above as
\begin{equation*}
=Cm^{-2H},\quad\text{where}\quad C:=\dfrac{4AH(2H-1)(2k)!}{(k-1)!2^{k}}<\infty.
\end{equation*}
This concludes the proof of the rate of convergence.
\end{proof}
The sharpness of the bound is a delicate matter. It has been proved by $\cite{NiXu}$ that for the Brownian motion the sharp rate of convergence is $1$, which motivates us to believe in the sharpness of our result. In order to be completely sure about it we need to prove that
\begin{equation*}
\limsup\limits_{m\rightarrow\infty}m^{2H}\Bigg|\mathbb{E}\left(\int_{\Delta^{2k}[0,1]}dB^{I}\right)-\mathbb{E}\left(\int_{\Delta^{2k}[0,1]}dB^{m,I}\right)\Bigg|>0.
\end{equation*}
However, the piecewise approximation makes this idea extremely difficult to put in practice. This is because, when we focus on the double integral $\int_{u<s<t<v}$, the piecewise approximation and the exact value are equal when the times $u$ and $v$ correspond to grid points $t_{q}$ and $t_{p}$ respectively, but they are not equal otherwise. Hence, as $m\rightarrow\infty$ the difference between the exact value and the piecewise approximation goes to zero not linearly. This makes the problem of the sharpness of the rate of convergence to appear impossible.

A possible solution is to consider the iterated integral as a whole without focusing on the general double integral. But this seems a difficult way of dealing with the problem. Indeed, we proceeded on with this way at the beginning of this work obtaining a bound with a rate of convergence of $2H$ but with a non integrable coefficient. For these reasons we are confident that the rate of convergence of $2H$ is sharp.

A final remark regards the fact that $m^{-2H}<m^{-1}$, which means that the difference goes to zero faster for the fBm than the Bm. This is an expected result since the fBm with $H>\frac{1}{2}$ is a smoother path than the Bm. This higher smoothness is related to the positive correlation of the increments of the fBm with $H>\frac{1}{2}$. Indeed, we can see that the rate of convergence becomes smaller (hence the convergence faster) as the positive correlation (represented by $H$) increases.
\section{Proof of Theorem \ref{pr2}}
As it is possible to see from equation $(\ref{rate})$ the coefficient of the rate of convergence goes to infinity as the degree of the truncated signature $k$ goes to infinity. In other words, $C\rightarrow\infty$ as $k\rightarrow\infty$. In the following, we improve the estimate of this coefficient and show that it goes to zero as $k$ goes to infinity.

In this section we prove Theorem \ref{pr2}. However, first we prove the following proposition.
\begin{pro}\label{newprop-permutation}
The following equality holds
\begin{equation*}
\sum_{\sigma\in\mathcal{G}_{2k}}\int_{0<t_{1}<...<t_{\sigma(2j-1)-1}<t_{\sigma(2j-1)+1}<...<t_{\sigma(2j)-1}<t_{\sigma(2j)+1}<...<t_{2k}<1}
\end{equation*}
\begin{equation*}
\prod_{l=1,l\neq j}^{k}\delta_{i_{\sigma(2l)},i_{\sigma(2l-1)}}|t_{\sigma(2l)}-t_{\sigma(2l-1)}|^{2H-2}dt_{1}\cdots dt_{\sigma(2j)-1}dt_{\sigma(2j)+1}\cdots dt_{\sigma(2j-1)-1}dt_{\sigma(2j-1)+1}\cdots dt_{2k}
\end{equation*}
\begin{equation}\label{new-permutation}
=2k(2k-1)\sum_{\tau\in\mathcal{G}_{2k-2}}\int_{\Delta^{2k-2}[0,1]}\prod_{l=1}^{k-1}\delta_{i_{\tau(2l)},i_{\tau(2l-1)}}|s_{\tau(2l)}-s_{\tau(2l-1)}|^{2H-2}\text{d}s_{1}\cdots \text{d}s_{2k-2},
\end{equation}
where $\mathcal{G}_{2k-2}$ is the set of all permutations of the set $\{1,...,2k-2\}$ and $j=1,...,k$.
\end{pro}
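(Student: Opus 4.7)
The plan is to establish this identity via a combinatorial decomposition of the sum over $\mathcal{G}_{2k}$ together with an order-preserving change of variables in each integral. The underlying observation is that every $\sigma\in\mathcal{G}_{2k}$ is uniquely determined by the ordered pair $(a,b):=(\sigma(2j-1),\sigma(2j))$, with $a\neq b$ in $\{1,\ldots,2k\}$, together with the bijection obtained by restricting $\sigma$ to $\{1,\ldots,2k\}\setminus\{2j-1,2j\}$. Since there are precisely $2k(2k-1)$ such ordered pairs $(a,b)$, this decomposition is the source of the prefactor appearing on the right-hand side of $(\ref{new-permutation})$.

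Concretely, for each ordered pair $(a,b)$ I would introduce the unique order-preserving bijections
\begin{equation*}
\psi:\{1,\ldots,2k-2\}\to\{1,\ldots,2k\}\setminus\{2j-1,2j\},\qquad\phi_{a,b}:\{1,\ldots,2k-2\}\to\{1,\ldots,2k\}\setminus\{a,b\},
\end{equation*}
and set $\tau:=\phi_{a,b}^{-1}\circ\sigma\circ\psi\in\mathcal{G}_{2k-2}$; this is a bijection between $\{\sigma\in\mathcal{G}_{2k}:\sigma(2j-1)=a,\,\sigma(2j)=b\}$ and $\mathcal{G}_{2k-2}$. Next, in each integral I would perform the change of variables $s_r:=t_{\phi_{a,b}(r)}$ for $r=1,\ldots,2k-2$. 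Because $\phi_{a,b}$ is order-preserving, the chain-type domain on the $t$-variables (with $t_{\sigma(2j-1)}$ and $t_{\sigma(2j)}$ excised from the ordering) is carried exactly onto the standard simplex $\Delta^{2k-2}[0,1]$ in the $s$-variables. Moreover, since $\psi^{-1}$ sends the pair positions $(2l-1,2l)$ with $l\neq j$ to the consecutive pair positions $(2l'-1,2l')$ for $l'=1,\ldots,k-1$ (explicitly, $l'=l$ for $l<j$ and $l'=l-1$ for $l>j$), the difference factor transforms as $|t_{\sigma(2l)}-t_{\sigma(2l-1)}|^{2H-2}=|s_{\tau(2l')}-s_{\tau(2l'-1)}|^{2H-2}$.

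The last step is to use Fubini's theorem to interchange the sum over $\sigma$ with the integration, to group the summands by $(a,b)$ so that the inner sum over $\sigma\mapsto\tau$ becomes the canonical $\mathcal{G}_{2k-2}$-sum on $\Delta^{2k-2}[0,1]$, and finally to collect the $2k(2k-1)$ contributions coming from the ordered pairs $(a,b)$. The delicate point I anticipate is the bookkeeping of the Kronecker deltas $\delta_{i_{\sigma(2l)},i_{\sigma(2l-1)}}$, which pull back under the change of variables to $\delta_{i_{\phi_{a,b}(\tau(2l'))},i_{\phi_{a,b}(\tau(2l'-1))}}$; one must verify that, upon summing over the ordered pairs $(a,b)$, these correctly reassemble into the canonical form $\prod_{l}\delta_{i_{\tau(2l)},i_{\tau(2l-1)}}$ displayed on the right-hand side of $(\ref{new-permutation})$. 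Once this relabeling of the word is properly accounted for, the identity follows from the bijection together with the change-of-variables formula.
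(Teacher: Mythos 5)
Your decomposition is exactly the one the paper uses: each $\sigma\in\mathcal{G}_{2k}$ is split into the ordered pair $(\sigma(2j-1),\sigma(2j))$ --- there are $2k(2k-1)$ of these, the $2$-permutations of $2k$ --- and the induced permutation of the remaining $2k-2$ indices, and the excised chain domain is carried onto $\Delta^{2k-2}[0,1]$ by an order-preserving relabeling. Your explicit bijections $\psi$, $\phi_{a,b}$ and the conjugation $\tau=\phi_{a,b}^{-1}\circ\sigma\circ\psi$ are a cleaner formalization of what the paper does by worked example in its Appendix, so as far as the method goes the two proofs coincide.

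The one step you leave open --- that the pulled-back factors $\delta_{i_{\phi_{a,b}(\tau(2l'))},i_{\phi_{a,b}(\tau(2l'-1))}}$ reassemble, after summing over the ordered pairs $(a,b)$, into the canonical $\delta_{i_{\tau(2l')},i_{\tau(2l'-1)}}$ --- is not merely delicate: it cannot be verified for a general word, because the left-hand side sees the letters of $I$ sitting at the positions $\{1,\dots,2k\}\setminus\{a,b\}$, whereas the right-hand side of $(\ref{new-permutation})$ always uses the first $2k-2$ letters of $I$. Concretely, for $2k=4$, $j=1$ and $I=(1,2,1,2)$ the left-hand side equals $8/(2H(2H-1))$ while the right-hand side is $0$, so the identity as stated is letter-sensitive in a way neither side of the claimed equality respects. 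The paper's own argument has the same blind spot (its Appendix explicitly ``skips the $\delta$s''), so you have reproduced the intended proof faithfully; both arguments become complete once every $\delta$ factor is majorized by $1$, turning the proposition into the inequality in which all deltas are dropped --- and that weaker statement is all that is actually used in the proof of Theorem \ref{pr2}.
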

\begin{proof}
For the moment, fix $\sigma\in\mathcal{G}_{2k}$ and consider the case $j=1$. By a simple change of variables we have
\begin{equation*}
\int_{0<t_{1}<...<t_{\sigma(1)-1}<t_{\sigma(1)+1}<...<t_{\sigma(2)-1}<t_{\sigma(2)+1}<...<t_{2k}<1}
\end{equation*}
\begin{equation*}
\prod_{l=2}^{k}\delta_{i_{\sigma(2l)},i_{\sigma(2l-1)}}|t_{\sigma(2l)}-t_{\sigma(2l-1)}|^{2H-2}dt_{1}\cdots dt_{\sigma(2)-1}dt_{\sigma(2)+1}\cdots dt_{\sigma(1)-1}dt_{\sigma(1)+1}\cdots dt_{2k}
\end{equation*}
\begin{equation*}
= \int_{\Delta^{2k-2}[0,1]}\prod_{l=1}^{k-1}\delta_{i_{\tau(2l)},i_{\tau(2l-1)}}|s_{\tau(2l)}-s_{\tau(2l-1)}|^{2H-2}\text{d}s_{1}\cdots \text{d}s_{2k-2},
\end{equation*}
where $\tau\in\mathcal{G}_{2k-2}$. Now, given a permutation $\tau\in\mathcal{G}_{2k-2}$ there are different $\sigma$s, say $\sigma_{1},\sigma_{2},...\in\mathcal{G}_{2k}$ such that they satisfy the same equality as above. In particular, for a given $\tau$ there are precisely $2k(2k-1)$ of these $\sigma$s that satisfy the equality. That is
\begin{equation*}
\sum_{j=1}^{2k(2k-1)}\int_{0<t_{1}<...<t_{\sigma_{j}(1)-1}<t_{\sigma_{j}(1)+1}<...<t_{\sigma_{j}(2)-1}<t_{\sigma_{j}(2)+1}<...<1}
\end{equation*}
\begin{equation*}
\prod_{l=2}^{k}\delta_{i_{\sigma_{j}(2l)},i_{\sigma_{j}(2l-1)}}|t_{\sigma_{j}(2l)}-t_{\sigma_{j}(2l-1)}|^{2H-2}dt_{1}\cdots dt_{\sigma(2)-1}dt_{\sigma(2)+1}\cdots dt_{\sigma(1)-1}dt_{\sigma(1)+1}\cdots dt_{2k}
\end{equation*}
\begin{equation*}
=2k(2k-1)\int_{\Delta^{2k-2}[0,1]}\prod_{l=1}^{k-1}\delta_{i_{\tau(2l)},i_{\tau(2l-1)}}|s_{\tau(2l)}-s_{\tau(2l-1)}|^{2H-2}\text{d}s_{1}\cdots \text{d}s_{2k-2}.
\end{equation*}
The $2k(2k-1)$ factor comes from the possible values that $\{\sigma(1),\sigma(2)\}$ may assume in the set $\{1,...,2k\}$ leaving the order of the remaining $2k-2$ elements, which are the values $\sigma(3),...,\sigma(2k)$, unchanged. That is the $2k(2k-1)$ comes from the $2$-permutation of $2k$.

The next step is to see that it is possible to reformulate all the permutations $\sigma\in\mathcal{G}_{2k}$ in terms of the permutations $\tau\in\mathcal{G}_{2k-2}$ times $2k(2k-1)$. In particular we have the following equality
\begin{equation*}
\sum_{\sigma\in\mathcal{G}_{2k}}\int_{0<t_{1}<...<t_{\sigma(1)-1}<t_{\sigma(1)+1}<...<t_{\sigma(2)-1}<t_{\sigma(2)+1}<...<1}
\end{equation*}
\begin{equation*}
\prod_{l=2}^{k}\delta_{i_{\sigma(2l)},i_{\sigma(2l-1)}}|t_{\sigma(2l)}-t_{\sigma(2l-1)}|^{2H-2}\text{d}t_{\sigma(2l)}\text{d}t_{\sigma(2l-1)}
\end{equation*}
\begin{equation*}
=2k(2k-1)\sum_{\tau\in\mathcal{G}_{2k-2}}\int_{\Delta^{2k-2}[0,1]}\prod_{l=1}^{k-1}\delta_{i_{\tau(2l)},i_{\tau(2l-1)}}|s_{\tau(2l)}-s_{\tau(2l-1)}|^{2H-2}\text{d}s_{1}\cdots \text{d}s_{2k-2}.
\end{equation*}
which is our desired equality $(\ref{new-permutation})$. The reason why this holds is that it is possible to decompose the permutations $\mathcal{G}_{2k}$ (the $\sigma$s) into the 2-permutations of $2k$ (which are $2k(2k-1)$ permutations for each permutation $\tau$ and they do not modify the value of the integral) times the permutations $\mathcal{G}_{2k-2}$ (the $\tau$s). Indeed, for $\mathcal{G}_{2k}$ we have $(2k)!$ permutations (and of course they are all different from each other) and for the other we have $(2k-2)!\cdot2k(2k-1)=(2k)!$ permutations (and they are also all different from each other).

It is possible to see that the same arguments apply to the case $j=1,...,k$. For example, take $j=k$ then it is easy to see that we have
\begin{equation*}
\sum_{\sigma\in\mathcal{G}_{2k}}\int_{0<t_{1}<...<t_{\sigma(2k)-1}<t_{\sigma(2k)+1}<...<t_{\sigma(2k-1)-1}<t_{\sigma(2k-1)+1}<...<t_{2k}<1}
\end{equation*}
\begin{equation*}
\prod_{l=1}^{k-1}\delta_{i_{\sigma(2l)},i_{\sigma(2l-1)}}|t_{\sigma(2l)}-t_{\sigma(2l-1)}|^{2H-2}dt_{1}\cdots dt_{\sigma(2k)-1}dt_{\sigma(2k)+1}\cdots dt_{\sigma(2k-1)-1}dt_{\sigma(2k-1)+1}\cdots dt_{2k}
\end{equation*}
\begin{equation*}
=2k(2k-1)\sum_{\tau\in\mathcal{G}_{2k-2}}\int_{\Delta^{2k-2}[0,1]}\prod_{l=1}^{k-1}\delta_{i_{\tau(2l)},i_{\tau(2l-1)}}|s_{\tau(2l)}-s_{\tau(2l-1)}|^{2H-2}\text{d}s_{1}\cdots \text{d}s_{2k-2}.
\end{equation*}
Since the proof is based on combinatorial arguments, which sometimes are difficult to grasp, we added a longer explanation of the result in the Appendix.
\end{proof}
We are now ready to prove Theorem \ref{pr2}.
\begin{proof}[Proof (Theorem \ref{pr2}).]
First, multiply our main object of study by $m^{2H}$, namely
\begin{equation*}
\Bigg|\mathbb{E}\left(\int_{\Delta^{2k}[0,1]}dB^{I}\right)-\mathbb{E}\left(\int_{\Delta^{2k}[0,1]}dB^{m,I}\right)\Bigg|m^{2H}.
\end{equation*}
We can use our estimates obtained in ($\ref{k-addends}$) to get:
\begin{equation*}
\Bigg|\mathbb{E}\left(\int_{\Delta^{2k}[0,1]}dB^{I}\right)-\mathbb{E}\left(\int_{\Delta^{2k}[0,1]}dB^{m,I}\right)\Bigg|m^{2H}\leq \dfrac{4AH^{k}(2H-1)^{k}}{k!2^{k}}
\end{equation*}
\begin{equation*}
\sum_{\sigma\in\mathcal{G}_{2k}}\bigg[\int_{0<t_{1}<...<t_{\sigma(2)-1}<t_{\sigma(2)+1}<...<t_{\sigma(1)-1}<t_{\sigma(1)+1}<...t_{2k}<1}
\end{equation*}
\begin{equation*}
\prod_{l=2}^{k}\delta_{i_{\sigma(2l)},i_{\sigma(2l-1)}}\sum_{i,j=1}^{m}\textbf{1}_{[t_{i},t_{i+1}]\times[t_{j},t_{j+1}]}(t_{\sigma(2l)},t_{\sigma(2l-1)})m^{2}\int_{t_{i}}^{t_{i+1}}\int_{t_{j}}^{t_{j+1}}|x-y|^{2H-2}dxdy
\end{equation*}
\begin{equation*}
dt_{1}\cdots dt_{\sigma(2)-1}dt_{\sigma(2)+1}\cdots dt_{\sigma(1)-1}dt_{\sigma(1)+1}\cdots dt_{2k}
\end{equation*}
\begin{equation*}
+...+\int_{0<t_{1}<...<t_{\sigma(2k)-1}<t_{\sigma(2k)+1}<...<t_{\sigma(2k-1)-1}<t_{\sigma(2k-1)+1}<...t_{2k}<1}
\end{equation*}
\begin{equation*}
\prod_{l=1}^{k-1}\delta_{i_{\sigma(2l)},i_{\sigma(2l-1)}}|t_{\sigma(2l)}-t_{\sigma(2l-1)}|^{2H-2}dt_{1}\cdots dt_{\sigma(2k)-1}dt_{\sigma(2k)+1}\cdots dt_{\sigma(2k-1)-1}dt_{\sigma(2k-1)+1}\cdots dt_{2k}\bigg].
\end{equation*}
Now, taking the limit as $m\rightarrow\infty$, by dominated convergence theorem we obtain
\begin{equation*}
\dfrac{4AH^{k}(2H-1)^{k}}{k!2^{k}}\sum_{\sigma\in\mathcal{G}_{2k}}\bigg[\int_{0<t_{1}<...<t_{\sigma(2)-1}<t_{\sigma(2)+1}<...<t_{\sigma(1)-1}<t_{\sigma(1)+1}<...t_{2k}<1}
\end{equation*}
\begin{equation*}
\prod_{l=2}^{k}\delta_{i_{\sigma(2l)},i_{\sigma(2l-1)}}|t_{\sigma(2l)}-t_{\sigma(2l-1)}|^{2H-2}dt_{1}\cdots dt_{\sigma(2)-1}dt_{\sigma(2)+1}\cdots dt_{\sigma(1)-1}dt_{\sigma(1)+1}\cdots dt_{2k}
\end{equation*}
\begin{equation*}
+...+\int_{0<t_{1}<...<t_{\sigma(2k)-1}<t_{\sigma(2k)+1}<...<t_{\sigma(2k-1)-1}<t_{\sigma(2k-1)+1}<...t_{2k}<1}
\end{equation*}
\begin{equation*}
\prod_{l=1}^{k-1}\delta_{i_{\sigma(2l)},i_{\sigma(2l-1)}}|t_{\sigma(2l)}-t_{\sigma(2l-1)}|^{2H-2}dt_{1}\cdots dt_{\sigma(2k)-1}dt_{\sigma(2k)+1}\cdots dt_{\sigma(2k-1)-1}dt_{\sigma(2k-1)+1}\cdots dt_{2k}\bigg].
\end{equation*}
From Proposition \ref{newprop-permutation} we have
\begin{equation}\label{problem2}
=\dfrac{4AH^{k}(2H-1)^{k}}{k!2^{k}}2k(2k-1)k\sum_{\tau\in\mathcal{G}_{2k-2}}\int_{\Delta^{2k-2}[0,1]}\prod_{l=1}^{k-1}\delta_{i_{\tau(2l)},i_{\tau(2l-1)}}|s_{\tau(2l)}-s_{\tau(2l-1)}|^{2H-2}\text{d}s_{1}\cdots \text{d}s_{2k-2}.
\end{equation}
Notice that the function $f(s_{1},...,s_{2k-2})$ defined as
\begin{equation*}
f(s_{1},...,s_{2k-2}):=\sum_{\tau\in\mathcal{G}_{2k-2}}\prod_{l=1}^{k-1}\delta_{i_{\tau(2l)},i_{\tau(2l-1)}}|s_{\tau(2l)}-s_{\tau(2l-1)}|^{2H-2}
\end{equation*}
is symmetric with respect to the diagonal of the $2k-2$-dimensional hypercube. Hence, we have
\begin{equation*}
\int_{\Delta^{2k-2}[0,1]}f(s_{1},...,s_{2k-2})ds_{1}\cdots ds_{2k-2}=\dfrac{1}{(2k-2)!}\int_{[0,1]^{2k-2}}f(s_{1},...,s_{2k-2})ds_{1}\cdots ds_{2k-2}.
\end{equation*}
Therefore, we have that the formula $(\ref{problem2})$ is equal to
\begin{equation*}
\dfrac{4AH^{k}(2H-1)^{k}}{k!2^{k}}\dfrac{2k(2k-1)k}{(2k-2)!}\sum_{\tau\in\mathcal{G}_{2k-2}}\int_{[0,1]^{2k-2}}\prod_{l=1}^{k-1}\delta_{i_{\tau(2l)},i_{\tau(2l-1)}}|s_{\tau(2l)}-s_{\tau(2l-1)}|^{2H-2}\text{d}s_{1}\cdots \text{d}s_{2k-2}
\end{equation*}
\begin{equation}\label{final}
\leq \dfrac{4AH^{k}(2H-1)^{k}}{k!2^{k}}\dfrac{2k(2k-1)k}{(2k-2)!}\dfrac{(2k-2)!}{H^{k-1}(2H-1)^{k-1}}=\dfrac{8AH(2H-1)}{(k-1)!2^{k}}k(2k-1)
\end{equation}
Recall that $A$ is given by
\begin{equation*}
A=2\left(\dfrac{1}{H(2H-1)}+\dfrac{2^{2H}+2}{H(2H-1)}+(4-4H)\sum_{i=1}^{\infty}i^{2H-3}\right)+\dfrac{3^{2H}+10(2^{2H})+2}{2H(2H-1)}
\end{equation*}
and let $\tilde{A}$ to be defined as
\begin{equation*}
\tilde{A}:=8AH(2H-1)=16\left(3+2^{2H}+H(2H-1)(4-4H)\sum_{i=1}^{\infty}i^{2H-3}\right)+4(3^{2H}+10(2^{2H})+2)
\end{equation*}
\begin{equation*}
=56(1+2^{2H})+4(3^{2H})+16H(2H-1)(4-4H)\sum_{i=1}^{\infty}i^{2H-3}.
\end{equation*}
Thus, equation $(\ref{final})$ is equal to $\dfrac{\tilde{A}k(2k-1)}{(k-1)!2^{k}}$.
\end{proof}
Notice that
\begin{equation*}
\dfrac{\tilde{A}k(2k-1)}{(k-1)!2^{k}}\rightarrow 0\qquad as \qquad k\rightarrow\infty.
\end{equation*}
Thus, the coefficient of the rate of convergence goes to zero as the number of iterated integrals increases (\textit{i.e.} as the order of the signature increases) and it goes very fast.
\section{Proof of Theorem \ref{pr3}}
In this section we shift the focus we had in the last two sections. Our object is now solely the expected signature of the fractional Brownian motion, for which we prove a simple but sharp estimate for it.
\\ Indeed, we prove Theorem \ref{pr3}. The proof of this theorem is very short and the arguments are similar to the ones used at the end of the proof of Theorem \ref{pr2}.
\begin{proof}
Consider the $2k$-term of the expected signature of the fractional Brownian motion for $H>\frac{1}{2}$, we have
\begin{equation*}
\mathbb{E}\left(\int_{\Delta^{2k}[0,1]}dB^{I}\right)=\dfrac{H^{k}(2H-1)^{k}}{k!2^{k}}\sum_{\sigma\in\mathcal{G}_{2k}}\int_{\Delta^{2k}[0,1]}\prod_{l=1}^{k}\delta_{i_{\sigma(2l)},i_{\sigma(2l-1)}}|t_{\sigma(2l)}-t_{\sigma(2l-1)}|^{2H-2}dt_{1}\cdot\cdot\cdot dt_{2k}
\end{equation*}
\begin{equation*}
=\dfrac{H^{k}(2H-1)^{k}}{k!2^{k}}\frac{1}{(2k)!}\sum_{\sigma\in\mathcal{G}_{2k}}\int_{[0,1]^{2k}}\prod_{l=1}^{k}\delta_{i_{\sigma(2l)},i_{\sigma(2l-1)}}|t_{\sigma(2l)}-t_{\sigma(2l-1)}|^{2H-2}dt_{1}\cdot\cdot\cdot dt_{2k}
\end{equation*}
\begin{equation*}
=\dfrac{H^{k}(2H-1)^{k}}{k!2^{k}}\frac{1}{(2k)!}\sum_{\sigma\in\mathcal{G}_{2k}}\prod_{l=1}^{k}\delta_{i_{\sigma(2l)},i_{\sigma(2l-1)}}\int_{0}^{1}\int_{0}^{1}|t-s|^{2H-2}dsdt
\end{equation*}
\begin{equation*}
=\dfrac{H^{k}(2H-1)^{k}}{k!2^{k}}\frac{1}{(2k)!}\frac{1}{H^{k}(2H-1)^{k}}\sum_{\sigma\in\mathcal{G}_{2k}}\prod_{l=1}^{k}\delta_{i_{\sigma(2l)},i_{\sigma(2l-1)}}\leq\dfrac{1}{k!2^{k}}
\dfrac{1}{(2k)!}(2k)!=\dfrac{1}{k!2^{k}},
\end{equation*}
by using only the fact that the function
\begin{equation*}
\sum_{\sigma\in\mathcal{G}_{2k}}\prod_{l=1}^{k}\delta_{i_{\sigma(2l)},i_{\sigma(2l-1)}}|t_{\sigma(2l)}-t_{\sigma(2l-1)}|^{2H-2}
\end{equation*}
is symmetric with respect to the diagonal.
\end{proof}
A similar but less sharp estimate was obtained in Proposition 4.8 of \cite{NNRT}. Indeed, using the explicit formulation of this proposition reported in Proposition 3.3 of \cite{BauZha}, it is possible to see that they obtained a uniform bound for the $2k$-th term of the expected signature of
\begin{equation*}
\frac{2^{k}}{H^{k}(2H-1)^{k}\sqrt{(2k)!}}.
\end{equation*}
Observe that
\begin{equation*}
\frac{2^{k}}{H^{k}(2H-1)^{k}\sqrt{(2k)!}}>\frac{2^{k}}{\sqrt{(2k)!}}>\dfrac{1}{k!2^{k}}.
\end{equation*}
The advantage of our result is that the estimate is sharp, it is independent of $H$ and the proof is very short. Moreover, we can also have an equality estimate for each particular word $I$ by just leaving the $\delta_{i_{\sigma(2l)},i_{\sigma(2l-1)}}$ in place. Indeed, in case we have a two dimensional fBm and the word $I$ does not contain just the letter $1$ then the upper bound becomes much smaller than $1/(k!2^{k})$. In particular, we have the following proposition which sharpen the estimate of Theorem \ref{pr3} according to which word $I$ is considered.
\begin{pro}
Let $p,n\in\mathbb{N}$. Consider a $n$-dimensional fBm with $n\geq p$ and a word $I$ which contains $p$ different letters then we have
\begin{equation*}
\mathbb{E}\left(\int_{\Delta^{2k}[s,t]}dB^{I}\right)\leq \dfrac{(t-s)^{2kH}}{k!2^{k}(2k)!}\frac{k!2^{p-1}(2(k-p+1))!}{(k-p+1)!}
\end{equation*}
for $p\leq k$ and zero otherwise.
\end{pro}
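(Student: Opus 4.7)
My plan is to prove this as a refinement of Theorem~\ref{pr3}, exploiting the fact that the combinatorial sum $\sum_{\sigma}\prod_{l}\delta_{i_{\sigma(2l)},i_{\sigma(2l-1)}}$ is typically much smaller than $(2k)!$ once the word $I$ has repeated letters.

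First I would use scaling to reduce to $[s,t]=[0,1]$, producing the factor $(t-s)^{2kH}$. Following verbatim the chain of equalities in the proof of Theorem~\ref{pr3} up to its penultimate line (which starts from Theorem~\ref{31} and uses that $\sum_{\sigma}\prod_{l}\delta_{i_{\sigma(2l)},i_{\sigma(2l-1)}}|t_{\sigma(2l)}-t_{\sigma(2l-1)}|^{2H-2}$ is symmetric with respect to the diagonal), I would arrive at
\[
\mathbb{E}\left(\int_{\Delta^{2k}[0,1]}dB^{I}\right) = \frac{1}{k!\,2^{k}(2k)!}\sum_{\sigma\in\mathcal{G}_{2k}}\prod_{l=1}^{k}\delta_{i_{\sigma(2l)},i_{\sigma(2l-1)}},
\]
so the problem reduces to an estimate of the combinatorial sum. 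I would then observe that $\prod_{l}\delta_{i_{\sigma(2l)},i_{\sigma(2l-1)}}$ depends on $\sigma$ only through the perfect matching of $\{1,\ldots,2k\}$ it induces, and that each matching arises from exactly $k!\,2^{k}$ permutations (reordering the $k$ pairs and flipping inside each pair). Writing $n_{j}$ for the number of times the $j$-th distinct letter appears in $I$, the sum above equals $k!\,2^{k}N(I)$, where $N(I)$ is the number of perfect matchings that pair only positions carrying the same letter. If some $n_{j}$ is odd then $N(I)=0$, which already handles the case $p>k$ via pigeonhole; otherwise a direct count inside each letter class gives $N(I)=\prod_{j=1}^{p}(n_{j}-1)!!$.

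The core analytic step is then the combinatorial inequality
\[
\prod_{j=1}^{p}(n_{j}-1)!! \;\leq\; (2k-2p+1)!! \;=\; \frac{(2(k-p+1))!}{(k-p+1)!\,2^{k-p+1}}
\]
over all sequences of positive even integers $(n_{1},\ldots,n_{p})$ with $\sum_{j}n_{j}=2k$. I would prove it by a transfer argument based on the log-convexity of $n\mapsto(n-1)!!$: since the ratio $(n+1)!!/(n-1)!!=n+1$ is strictly increasing, whenever two coordinates satisfy $n_{i},n_{j}\geq 4$ the replacement $(n_{i},n_{j})\mapsto(n_{i}-2,\,n_{j}+2)$ strictly increases the product, and iterating drives every coordinate down to $2$ except one, which must then equal $2(k-p+1)$. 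Substituting the bound back into the previous display and using $2^{k}/2^{k-p+1}=2^{p-1}$ gives precisely the stated estimate. I expect the log-convex transfer argument to be the only non-routine step; the remainder is bookkeeping about the number of permutations mapping to each matching.
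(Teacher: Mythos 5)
Your proposal is correct, and it reaches exactly the stated constant: writing $m=k-p+1$, your bound $\tfrac{1}{k!2^{k}(2k)!}\cdot k!2^{k}\cdot(2m-1)!!=\tfrac{(2m)!}{2^{m}m!(2k)!}$ coincides with the right-hand side of the proposition after the identity $2^{k}/2^{m}=2^{p-1}$. The overall strategy is the same as the paper's — both reduce, via the exact identity from the proof of Theorem \ref{pr3}, to bounding the purely combinatorial sum $\sum_{\sigma\in\mathcal{G}_{2k}}\prod_{l}\delta_{i_{\sigma(2l)},i_{\sigma(2l-1)}}$ and both identify the extremal configuration as the one where a single letter absorbs all the repetitions — but your execution is genuinely more complete. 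The paper computes the sum by hand for $2k=12$ and $p=6,5,4$, asserts without proof that the concentrated multiplicity pattern dominates (``it is possible to check that the second case give us an higher upper bound''), and closes with ``the same procedure applies \ldots for any $k$.'' You instead give the closed form $k!2^{k}\prod_{j=1}^{p}(n_{j}-1)!!$ (which reproduces the paper's three worked examples exactly) and then prove the extremality claim in general via the transfer argument on even compositions of $2k$; this is the step the paper leaves unverified, and it is what actually makes the bound hold for all $k$ and $p$. One small point of care in the transfer step: the replacement $(n_{i},n_{j})\mapsto(n_{i}-2,n_{j}+2)$ multiplies the product by $(n_{j}+1)/(n_{i}-1)$, which exceeds $1$ only when you shrink the smaller coordinate and grow the larger one, so you should order the pair as $n_{i}\leq n_{j}$ before applying it; with that convention the argument that a maximizer has at most one part exceeding $2$ is airtight, and the maximal part is then forced to be $2(k-p+1)$. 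Your pigeonhole observation for $p>k$ (some $n_{j}$ must be odd, so no letter-respecting matching exists) also correctly covers the ``zero otherwise'' clause.
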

\begin{proof}
Observe that if $p>k$ then there is going to be a letter which cannot be paired with a letter of the same value. Hence, the expected iterated integral is zero. \\Regarding the case $p\leq k$, we proceed as follows. First, we pick a number $k$ and analyse the values obtained for different $p$. In particular, consider the case $2k=12$, hence $I=(i_{1},...,i_{12})$. Since what matters are pairs of letters, we can reformulate the word $I=(a_{1},...,a_{6})$ where $a_{j}$ is a pair of two letters with the same value. Now, if $p=k$ then $a_{1}\neq...\neq a_{6}$ and all the combinations becomes
\begin{equation*}
\sum_{\sigma\in\mathcal{G}_{2k}}\prod_{l=1}^{k}\delta_{i_{\sigma(2l)},i_{\sigma(2l-1)}}=6!2^{6},
\end{equation*}
where the $6!$ comes from the possible permutations of the $a_{j}$ in the word $I$ and the $2^{6}$ comes from the fact that inside each $a_{j}$ we have two letters that can assume two possible positions and this is true for $j=1,..,6$. In case $p=5$ then we can reformulate $I:=(a_{1},a_{1},...,a_{6})$ with $a_{1}=a_{2}$ and we obtain
\begin{equation*}
\sum_{\sigma\in\mathcal{G}_{2k}}\prod_{l=1}^{k}\delta_{i_{\sigma(2l)},i_{\sigma(2l-1)}}=\frac{6!}{2}2^{4}4!,
\end{equation*}
where $6!$ comes from again the possible permutations of $a_{j}$ and the $1/2$ comes from the fact that the order $...,a_{1},...,a_{2},...$ is the same as $...,a_{2},...,a_{1},...$. The relevance of the order is indeed considered but at the level of the letters, in the $4!$, as explained in the following line. Further, the $2^{4}$ comes from the two permutations of each $a_{3},...,a_{6}$ and the $4!$ comes from the permutations of the letter inside $(a_{1},a_{2})$. In case $p=4$ there are two possible cases $I=(a_{1},...,a_{6})$ with $a_{1}=a_{2}$ and $a_{3}=a_{4}$ (and the others are all different) or $a_{1}=a_{2}=a_{3}$ (and the others are all different). It is possible to check that the second case give us an higher upper bound so we use this case, for which we have 
\begin{equation*}
\sum_{\sigma\in\mathcal{G}_{2k}}\prod_{l=1}^{k}\delta_{i_{\sigma(2l)},i_{\sigma(2l-1)}}=\frac{6!}{3!}2^{3}6!,
\end{equation*}
where the first $6!$ comes from again the possible permutations of $a_{j}$, further the order of $a_{1},a_{2},a_{3}$ is not relevant now and, since there are $3!$ different order for each permutation of the $6$ elements, we divide by $3!$; in addition, the $2^{3}$ comes from the two permutations of each $a_{4},a_{5},a_{6}$ and the $6!$ comes from the permutations of the letter inside $(a_{1},a_{2},a_{3})$. The same procedure applies to $p=3,2,1$ and for any $k\in\mathbb{N}$.
\end{proof}
\section{The cubature method for the fBm and the proofs of Theorems \ref{prNEW} and \ref{pr4}}\label{cub}
\noindent We start this section with a brief introduction to the cubature method for the fractional Brownian motion. In this section we do \textit{not} use the notation $B_{t}:=B_{t}^{H}$ since we work with the Brownian motion as well.

The cubature method is a numerical method used to obtain approximate solutions to SDEs and parabolic PDEs. The first main step is to obtain the cubature formula, which we define now for the $d$-dimensional fBm. The setting is the same as the one presented for the Bm case in \cite{LV}. So, the probability space we are working on is $(C_{0}^{0}([0,T],\mathbb{R}^{d}),\mathcal{F},\mathbb{P})$ where $C_{0}^{0}([0,T],\mathbb{R}^{d})$ is the space of
$\mathbb{R}^{d}$-valued continuous functions defined in $[0, T ]$ and which starts at zero (i.e. the Wiener space), $\mathcal{F}$ is its Borel $\sigma$-field and $\mathbb{P}$ is the Wiener measure. As in \cite{LV}, let $B_{t}:C_{0}^{0}([0,T],\mathbb{R}^{d})\rightarrow\mathbb{R}^{d}$ such that $B^{i}_{t}(\omega)=\omega^{i}(t)$ for $t\in[0,T]$ and $i=1,..,d$, then $\{B_{t}\}_{t\in[0,T]}=\{B^{1}_{t},...,B^{d}_{t}\}_{t\in[0,T]}$ is a $\mathbb{R}^{d}$-valued Brownian motion on the probability space $(C_{0}^{0}([0,T],\mathbb{R}^{d}),\mathcal{F},\mathbb{P})$.

Now, recall that it is possible to write the fBm in terms of a Bm, as done in \cite{H}. In particular we will follow the formulation in \cite{H} and consider the process $B^{H,i}_{t}(\omega)=\int_{0}^{t}K(t,s)dB^{i}_{s}(\omega)\left(=\int_{0}^{t}K(t,s)d\omega^{i}(s)\right)$ for $t\in[0,T]$ and $i=1,..,d$. Then, the process $\{B^{H}_{t}\}_{t\in[0,T]}=\{B^{H,1}_{t},...,B^{H,d}_{t}\}_{t\in[0,T]}$ is a $\mathbb{R}^{d}$-valued fractional Brownian motion on the probability space $(C_{0}^{0}([0,T],\mathbb{R}^{d}),\mathcal{F},\mathbb{P})$.
\\ As in \cite{LV}, the process $\{\xi_{t,x}\}_{t\in[0,T]}$ is a stochastic process like our $\{B_{t}\}_{t\in[0,T]}$, that is $\xi_{t,x}:C_{0}^{0}([0,T],\mathbb{R}^{d})\rightarrow\mathbb{R}^{N}$ (for $N\in\mathbb{N}$), hence $\xi_{t,x}:\omega\mapsto \xi_{t,x}(\omega)$. Further, we will consider the process $\{\hat{B}^{H}_{t}\}_{t\in[0,T]}=\{t,B^{H,1}_{t},...,B^{H,d}_{t}\}_{t\in[0,T]}$.

First, we give the definition of the cubature formula for the fBm for $H\geq 1/2$ on the Wiener space.
\begin{defn}\label{cubature-definition}
Let $m\in\mathbb{N}$ and $H\geq\frac{1}{2}$. Let define $\mathcal{A}_{m}:=\{(i_{1},...,i_{k})\in\{0,...,d\}^{k},2Hk+(2-2H)\times\mathnormal{card}\{l,i_{l}=0\}\leq m \,\,\,\text{and}\,\,\, k\in\mathbb{N} \}$. We say that the paths
\begin{equation*}
\bar{\omega}_{1} , . . . , \bar{\omega}_{n}\in C_{0,bv}^{0}
([0, T ], \mathbb{R}^{d})
\end{equation*}
and the positive weights $\lambda_{1} , . . . ,\lambda_{n}$ define a cubature formula of degree $m$ at time $T$, if and only if, for all $(i_{1}, . . . , i_{k})\in \mathcal{A}_{m}$,
\begin{equation*}
\mathbb{E}\left[\int_{0<t_{1}<,...,<t_{k}< T} d\hat{B}_{t_{1}}^{H,i_{1}}\cdot\cdot\cdot d\hat{B}_{t_{k}}^{H,i_{k}}\right]=\sum_{j=1}^{n}\lambda_{j}\int_{0<t_{1}<,...,<t_{k}< T}d\hat{\omega}_{j}^{i_{1}}(t_{1})\cdot\cdot\cdot d\hat{\omega}_{j}^{i_{k}}(t_{k}).
\end{equation*}
where, for $l=1,...,k$,
\begin{equation*}
\hat{B}_{t_{l}}^{H,i_{l}}:=
\begin{cases}
t_{l}, \qquad if \quad i_{l}=0,\\
B^{i_{l}H}_{t_{l}} \qquad if \quad i_{l}\neq 0,
\end{cases}\quad \text{and}\quad \hat{\omega}_{t_{l}}^{i_{l}}:=
\begin{cases}
t_{l}, \qquad if \quad i_{l}=0,\\
\bar{\omega}^{i_{l}}_{t_{l}} \qquad if \quad i_{l}\neq 0.
\end{cases}
\end{equation*}
\end{defn}
\noindent Notice that the formulation of $\mathcal{A}_{m}$ comes from the fact that for a word $I$
\begin{equation*}
\mathbb{E}\left(\int_{\Delta^{k}[s,t]}d\hat{B}^{H,I}\right)=\mathbb{E}\left(\int_{\Delta^{k}[0,t-s]}d\hat{B}^{H,I}\right)=(t-s)^{kH+(1-H)\times\mathnormal{card}\{j,i_{j}=0\}}\mathbb{E}\left(\int_{\Delta^{k}[0,1]}d\hat{B}^{H,I}\right)
\end{equation*}
(see \cite{NNRT} Proposition 4.8 for similar discussions) and the reason why we multiply by 2 in $\mathcal{A}_{m}$ is to be consistent with the formulation provided by \cite{LV} for the Bm case. 

Once the cubature formula is obtained, it is possible to derive approximate solutions of SDEs driven by fBm. Indeed, let us recall that $C_{b}^{\infty}(\mathbb{R}^{N},\mathbb{R}^{N})$ be the space of $\mathbb{R}^{N}$-valued smooth functions defined in $\mathbb{R}^{N}$ whose derivatives of any order are bounded. We regard elements of $C_{b}^{\infty}(\mathbb{R}^{N},\mathbb{R}^{N})$ as vector fields on $\mathbb{R}^{N}$. Let $V_{0},...,V_{d}$ be such vector fields. Let $\xi_{t,x}$, $t\in[0, T ], x\in \mathbb{R}^{N}$, be the solution of the SDE
\begin{equation}\label{SDE}
\text{d}\xi_{t,x}=\sum_{i=0}^{d}V_{i}(\xi_{t,x})\text{d}\hat{B}^{H,i}_{t},\quad\text{with}\quad\xi_{0,x}=x.
\end{equation}
Moreover, let  $\Phi_{T,x}(\omega_{j}^{*})$, where $\omega_{j}^{*}\in C_{0}^{0}([0,T],\mathbb{R}^{d})$, be the solution at time T of the ODE 
\begin{equation}\label{ODE}
\text{d}y_{t,x}=\sum_{i=0}^{d}V_{i}(y_{t,x})\text{d}\hat{\omega}^{i}_{j}(t),\quad\text{with}\quad y_{0,x}=x.
\end{equation}

The core message of the cubature method is that the weighted sum of the solutions of the ODEs $(\ref{ODE})$ for $j=1,...,n$, where the weights are given by $\lambda_{1},...,\lambda_{n}$ of the cubature formula, approximates the expected solution of the SDE $(\ref{SDE})$. In particular, we have the Theorem \ref{prNEW}, which extends Proposition 2.1, Lemma 3.1 and Proposition 3.2, Proposition of \cite{LV} to the fBm case for $H>1/2$. Before proving the theorem let us recall some notation. Let $\|\cdot\|_{\mathbb{R}^{N}}$ denote the Euclidean norm in $\mathbb{R}^{N}$ and let $|I|$ indicate the length of the word $I$, where $I\in\{0,...,d\}^{k}$ for some $k\in\mathbb{N}$. We will also use the supremum norm $\|\cdot\|_{\infty}$, \textit{e.g.} $\|V_{i_{1}}\cdots V_{i_{k}}f\|_{\infty}=\sup_{x\in\mathbb{R}^{N}}\frac{|(V_{i_{1}}\cdots V_{i_{k}}f)(x)|}{\|x\|_{\mathbb{R}^{N}}}$. 
\begin{proof}[Proof (Theorem \ref{prNEW})]
First of all, notice that we have the following stochastic Taylor expansion (see Proposition 2.1 of \cite{LV} together with \cite{BauZha}). For any $f$ which is smooth (\textit{i.e.} infinitely differentiable) and whose derivatives of any order are bounded, we have
\begin{equation*}
f(\xi_{T,x})=\sum_{(i_{1},...,i_{k})\in\mathcal{A}_{m}}V_{i_{1}}\cdots V_{i_{k}}f(x)\int_{0<t_{1}<,...,<t_{k}< T} d\hat{B}_{t_{1}}^{H,i_{1}}\cdot\cdot\cdot d\hat{B}_{t_{k}}^{H,i_{k}}+R_{m}(T,x,f),\quad\text{where}
\end{equation*}
\begin{equation*}
R_{m}(T,x,f)=\sum_{(i_{1},...,i_{k})\in\mathcal{A}_{m}, (i_{0},...,i_{k})\notin\mathcal{A}_{m}}\int_{0<t_{0}<,...,<t_{k}< T}V_{i_{0}}\cdots V_{i_{k}}f(\xi_{t_{0},x}) d\hat{B}_{t_{0}}^{H,i_{0}}\cdot\cdot\cdot d\hat{B}_{t_{k}}^{H,i_{k}}.
\end{equation*}
Assume that the paths $\bar{\omega}_{1}, . . . ,\bar{\omega}_{n}\in C_{0,bv}^{0}([0,T],\mathbb{R}^{d})$ and the weights $\lambda_{1}, . . . ,\lambda_{n}$ define a cubature formula for the fBm of degree $m$ for time T.
Now notice that by the cubature formula, we have 
\begin{equation}\label{zzz}
\mathbb{E}\left[\int_{0<t_{1}<,...,<t_{k}< T} d\hat{B}_{t_{1}}^{H,i_{1}}\cdot\cdot\cdot d\hat{B}_{t_{k}}^{H,i_{k}}\right]=\sum_{j=1}^{n}\lambda_{j}\int_{0<t_{1}<,...,<t_{k}< T}d\hat{\omega}_{j}^{i_{1}}(t_{1})\cdot\cdot\cdot d\hat{\omega}_{j}^{i_{k}}(t_{k}),
\end{equation}
where $\hat{\omega}(t)=(t,\bar{\omega}(t))$. Observe that from our setting, we have
\begin{equation*}
\mathbb{E}\left[\int_{0<t_{1}<,...,<t_{k}< T} d\hat{B}_{t_{1}}^{H,i_{1}}\cdot\cdot\cdot d\hat{B}_{t_{k}}^{H,i_{k}}\right]=\int_{C_{0}^{0}([0,T],\mathbb{R}^{d})}\int_{0<t_{1}<,...,<t_{k}< T}d\hat{B}_{t_{1}}^{H,i_{1}}(\omega)\cdot\cdot\cdot d\hat{B}_{t_{k}}^{H,i_{k}}(\omega)\mathbb{P}(d\omega),
\end{equation*}
so eq.~$(\ref{zzz})$ can be rewritten as
\begin{equation*}
\int_{C_{0}^{0}([0,T],\mathbb{R}^{d})}\int_{0<t_{1}<,...,<t_{k}< T}d\hat{B}_{t_{1}}^{H,i_{1}}(\omega)\cdot\cdot\cdot d\hat{B}_{t_{k}}^{H,i_{k}}(\omega)\mathbb{P}(d\omega)=\sum_{j=1}^{n}\lambda_{j}\int_{0<t_{1}<,...,<t_{k}< T}d\hat{\omega}_{j}^{i_{1}}(t_{1})\cdot\cdot\cdot d\hat{\omega}_{j}^{i_{k}}(t_{k}).
\end{equation*}

From this formula we can obtain the probability measure $\mathbb{Q}_{T}=\sum_{j=1}^{n}\lambda_{j}\delta_{\omega_{j}^{\star}}$ on $(C_{0}^{0}([0,T],\mathbb{R}^{d}),\mathcal{F})$, namely on the Wiener space with its Borel $\sigma$-field. In particular, let
\begin{equation*}
Z(\omega):=\int_{0<t_{1}<,...,<t_{k}< T}d\hat{B}_{t_{1}}^{H,i_{1}}(\omega)\cdot\cdot\cdot d\hat{B}_{t_{k}}^{H,i_{k}}(\omega),
\end{equation*}
and so
\begin{equation*}
\int_{C_{0}^{0}([0,T],\mathbb{R}^{d})}\int_{0<t_{1}<,...,<t_{k}< T}d\hat{B}_{t_{1}}^{H,i_{1}}(\omega)\cdot\cdot\cdot d\hat{B}_{t_{k}}^{H,i_{k}}(\omega)\mathbb{P}(d\omega)=\int_{C_{0}^{0}([0,T],\mathbb{R}^{d})}Z(\omega)\mathbb{P}(d\omega),
\end{equation*}
and by applying the measure $\mathbb{Q}_{T}$ we have
\begin{equation*}
\mathbb{E}_{\mathbb{Q}_{T}}\left[Z\right]=\sum_{j=1}^{n}\lambda_{j}\int_{C_{0}^{0}([0,T],\mathbb{R}^{d})}Z(\omega)\delta_{\omega_{j}^{\star}}d\omega=\sum_{j=1}^{n}\lambda_{j}Z(\omega_{j}^{\star})
\end{equation*}
\begin{equation*}
=\sum_{j=1}^{n}\lambda_{j}\int_{0<t_{1}<,...,<t_{k}< T}d\hat{B}_{t_{1}}^{H,i_{1}}(\omega_{j}^{\star})\cdot\cdot\cdot d\hat{B}_{t_{k}}^{H,i_{k}}(\omega_{j}^{\star}),
\end{equation*}
where from the definition of the fBm we have, for $i_{l}\neq0$, $\hat{B}_{t_{l}}^{H,i_{l}}(\omega_{j}^{\star})=\int_{0}^{t_{l}}K(t_{l},s)d\omega^{\star,i_{l}}_{j}(s)$, while, for $i_{l}=0$, $\hat{B}_{t_{l}}^{H,i_{l}}(\omega_{j}^{\star})=t_{l}$.\\It is possible to observe that we have not mentioned how we select these $\omega^{\star}_{j}\in C_{0}^{0}([0,T],\mathbb{R}^{d})$. We do it now. Choose $\omega^{\star}_{j}\in C_{0}^{0}([0,T],\mathbb{R}^{d})$ such that $\hat{\omega}^{i_{l}}_{j}(t_{l})=\int_{0}^{t_{l}}K(t_{l},s)d\omega^{\star,i_{l}}_{j}(s)$ for $i_{l}\neq 0$. We know that such $\omega^{\star}_{j}$ exists since $\bar{\omega}\in C_{\text{bv},0}^{0}([0,T],\mathbb{R}^{d})$, where we had $\hat{\omega}(t)=(t,\bar{\omega}(t))$. Thus, $\mathbb{Q}_{T}=\sum_{j=1}^{n}\lambda_{j}\delta_{\omega_{j}^{\star}}$ is a probability measure on $(C_{0}^{0}([0,T],\mathbb{R}^{d}),\mathcal{F})$.
\\ Hence, we have $\hat{B}_{t_{l}}^{H,i_{l}}(\omega_{j}^{\star})=\hat{\omega}^{i_{l}}_{j}(t_{l})$ and, consequently,
\begin{equation*}
\sum_{j=1}^{n}\lambda_{j}\int_{0<t_{1}<,...,<t_{k}< T}d\hat{B}_{t_{1}}^{H,i_{1}}(\omega_{j}^{\star})\cdot\cdot\cdot d\hat{B}_{t_{k}}^{H,i_{k}}(\omega_{j}^{\star})=\sum_{j=1}^{n}\lambda_{j}\int_{0<t_{1}<,...,<t_{k}< T}d\hat{\omega}_{j}^{i_{1}}(t_{1})\cdot\cdot\cdot d\hat{\omega}_{j}^{i_{k}}(t_{k}),
\end{equation*}
that is
\begin{equation*}
\mathbb{E}_{\mathbb{Q}_{T}}\left[\int_{0<t_{1}<,...,<t_{k}< T} d\hat{B}_{t_{1}}^{H,i_{1}}\cdot\cdot\cdot d\hat{B}_{t_{k}}^{H,i_{k}}\right]=\sum_{j=1}^{n}\lambda_{j}\int_{0<t_{1}<,...,<t_{k}< T}d\hat{\omega}_{j}^{i_{1}}(t_{1})\cdot\cdot\cdot d\hat{\omega}_{j}^{i_{k}}(t_{k}).
\end{equation*}
Therefore, from eq. $(\ref{zzz})$ we obtain the following formulation:
\begin{equation}\label{equa}
\mathbb{E}\left[\int_{0<t_{1}<,...,<t_{k}< T} d\hat{B}_{t_{1}}^{H,i_{1}}\cdot\cdot\cdot d\hat{B}_{t_{k}}^{H,i_{k}}\right]=\mathbb{E}_{\mathbb{Q}_{T}}\left[\int_{0<t_{1}<,...,<t_{k}< T} d\hat{B}_{t_{1}}^{H,i_{1}}\cdot\cdot\cdot d\hat{B}_{t_{k}}^{H,i_{k}}\right].
\end{equation}
It is very important to notice that while $B_{t_{l}}^{H,i_{l}}(\omega)$ is a fBm under the Wiener measure, it is not any more under $\mathbb{Q}_{T}$.

Observe that equality $(\ref{equa})$ holds only for all $(i_{1}, . . . ,i_{k})\in\mathcal{A}_{m}$ by definition of the cubature formula. Moreover, notice that
\begin{equation*}
\mathbb{E}_{\mathbb{Q}_{T}}[f(\xi_{T,x})]=\sum_{j=1}^{n}\lambda_{j}\int_{C_{0}^{0}([0,T],\mathbb{R}^{d})}f(\xi_{T,x}(\omega_{j}))\delta_{\omega_{j}^{*}}d\omega=\sum_{j=1}^{n}\lambda_{j}f(\xi_{T,x}(\omega_{j}^{*}))
\end{equation*}
and that $\xi_{T,x}(\omega_{j}^{*})$ is the solution of 
\begin{equation*}
\text{d}\xi_{t,x}(\omega_{j}^{*})=\sum_{i=0}^{d}V_{i}(\xi_{t,x}(\omega_{j}^{*}))\text{d}\hat{B}^{H,i}_{t}(\omega_{j}^{*}),\quad\text{with}\quad\xi_{0,x}(\omega_{j}^{*})=x,
\end{equation*}
or, equivalently,  the solution of 
\begin{equation*}
\text{d}y_{t,x}=\sum_{i=0}^{d}V_{i}(y_{t,x})\text{d}\hat{\omega}^{i}_{j}(t),\quad\text{with}\quad y_{0,x}=x,
\end{equation*}
which we defined before to be $\Phi_{t,x}(\omega_{j}^{*})$. Therefore,
\begin{equation*}
\sum_{j=1}^{n}\lambda_{j}f(\Phi_{T,x}(\omega_{j}^{*}))=\mathbb{E}_{\mathbb{Q}_{T}}[f(\xi_{T,x})].
\end{equation*}

Let us now adapt the stochastic Taylor formula to the probability measure $\mathbb{Q}_{T}$. In particular, using the scaling property of the fBm, which is inherited by the respective cubature paths we have that
\begin{equation*}
|\mathbb{E}_{\mathbb{Q}_{T}}[R_{m}(T,x,f)]|=\Bigg|\mathbb{E}_{\mathbb{Q}_{T}}\left[\sum_{(i_{1},...,i_{k})\in\mathcal{A}_{m}, (i_{0},...,i_{k})\notin\mathcal{A}_{m}}\int_{0<t_{0}<,...,<t_{k}< T}V_{i_{0}}\cdots V_{i_{k}}f(\xi_{t_{0},x}) d\hat{B}_{t_{0}}^{H,i_{0}}\cdots d\hat{B}_{H,t_{k}}^{i_{k}}\right]\Bigg|
\end{equation*}
\begin{equation}\label{ineq}
\leq \sum_{j=1}^{n}\lambda_{j}\sum_{(i_{1},...,i_{k})\in\mathcal{A}_{m}, (i_{0},...,i_{k})\notin\mathcal{A}_{m}}\bigg|\int_{0<t_{0}<,...,<t_{k}< T}V_{i_{0}}\cdots V_{i_{k}}f(\xi_{t_{0},x}(\omega_{j}^{\star})) d\hat{\omega}_{j}^{i_{1}}(t_{1})\cdot\cdot\cdot d\hat{\omega}_{j}^{i_{k}}(t_{k})\bigg|
\end{equation}
\begin{equation*}
\leq \sum_{i=j}^{n}\lambda_{j}\sum_{(i_{1},...,i_{k})\in\mathcal{A}_{m}, (i_{0},...,i_{k})\notin\mathcal{A}_{m}}\|V_{i_{0}}\cdots V_{i_{k}}f\|_{\infty}
\end{equation*}
\begin{equation*}
T^{1+kH+(1-H)\times\mathnormal{card}\{l,i_{l}=0\}}\int_{0<t_{0}<,...,<t_{k}< 1}\big|d\hat{\omega}_{j}^{i_{0}}(t_{0})\big|\cdots \big|d\hat{\omega}_{j}^{i_{k}}(t_{k})\big|
\end{equation*}
\begin{equation*}
\leq C' T^{(m+2)/2} \sup_{(i_{1},...,i_{k})\in\mathcal{A}_{m}, (i_{0},...,i_{k})\notin\mathcal{A}_{m}}\|V_{i_{0}}\cdots V_{i_{k}}f\|_{\infty},\quad\text{if $T\geq 1$, and}
\end{equation*}
\begin{equation*}
(\ref{ineq})\leq \sum_{i=j}^{n}\lambda_{j}\sum_{(i_{1},...,i_{k})\in\mathcal{A}_{m}, (i_{0},...,i_{k})\notin\mathcal{A}_{m}}\|V_{i_{0}}\cdots V_{i_{k}}f\|_{\infty}
\end{equation*}
\begin{equation*}
T^{H+kH+(1-H)\times\mathnormal{card}\{l,i_{l}=0\}}\int_{0<t_{0}<,...,<t_{k}< 1}\big|d\hat{\omega}_{j}^{i_{0}}(t_{0})\big|\cdots \big|d\hat{\omega}_{j}^{i_{k}}(t_{k})\big|
\end{equation*}
\begin{equation*}
\leq C'' T^{2H} \sup_{(i_{1},...,i_{k})\in\mathcal{A}_{m}, (i_{0},...,i_{k})\notin\mathcal{A}_{m}}\|V_{i_{0}}\cdots V_{i_{k}}f\|_{\infty},\quad\text{if $T<1$,}
\end{equation*}
where $C',C''>0$  and does not depend on $T$. Notice that the $1$ in $1+kH+(1-H)\times\mathnormal{card}\{l,i_{l}=0\}$ and the $H$ in $H+kH+(1-H)\times\mathnormal{card}\{l,i_{l}=0\}$ comes from the fact that we might have $i_{0}=0$ (hence we will have the scaling $T$) or $i_{0}=1$ (hence we will have the scaling $T^{H}$), since $T\geq T^{H}$ for $T\geq1$ we take $T$ to get the upper bound for $T\geq1$ and $T^{H}$ otherwise for $T<1$. Moreover, for the last inequality in both cases we used the fact that for any word in $\mathcal{A}_{m}$ we have by its definition that $H\leq kH+(1-H)\times\mathnormal{card}\{l,i_{l}=0\}\leq m/2$.

Regarding $|\mathbb{E}[R_{m}(T,x,f)]|$, we are not able to get a similar bound. Indeed, \cite{BauZha} is partially devoted to the study of this quantity. More precisely, in \cite{BauZha} a different form of the remainder is considered, but the two can be linked together. In particular, from Theorem 3.4 in \cite{BauZha} we have
\begin{equation*}
|\mathbb{E}[R_{m}(T,x,f)]|\leq C_{\gamma}\frac{(dMKT)^{(m+2)/2}}{(((m+2)/2)!)^{1/2-\gamma}}\sum_{k=0}^{\infty}\frac{(dMKT)^{k}}{(k!)^{1/2-\gamma}},\quad\text{if $T\geq 1$ and}
\end{equation*}
\begin{equation*}
|\mathbb{E}[R_{m}(T,x,f)]|\leq C_{\gamma}\frac{(dMKT^{H})^{(m+2)/2}}{(((m+2)/2)!)^{1/2-\gamma}}\sum_{k=0}^{\infty}\frac{(dMKT^{H})^{k}}{(k!)^{1/2-\gamma}},\quad\text{if $T< 1$},
\end{equation*}
where $M$ and $K$ are defined in the statement of the theorem. We used $(m+2)/2$ for the ``$N+1$" in the statement of Theorem 3.4 in \cite{BauZha} because it is the minimum number of iterated integrals of $\mathcal{A}_{m+2}$. Further, we used $T$ instead of $T^{H}$ because we want a bound which is independent of the exact composition of the word $I$:
\begin{equation*}
\mathbb{E}\left(\bigg|\int_{\Delta^{k}[0,T]}d\hat{B}^{H,I}\bigg|\right)^{1/2}\leq T^{kH+(1-H)\times\mathnormal{card}\{j,i_{j}=0\}}\frac{K^{k}}{\sqrt{k!}}\leq \begin{cases}
T^{k}\dfrac{K^{k}}{\sqrt{k!}}\quad\text{if $T\geq 1$},\\T^{kH}\dfrac{K^{k}}{\sqrt{k!}}\quad\text{if $T< 1$}.
\end{cases} 
\end{equation*}
Notice that by doing this we are able to consider the drift, which is not considered explicitly in Theorem  3.4 in \cite{BauZha}. Moreover, we have not mentioned that the vector fields $V_{i}$s are analytic since being infinitely differentiable and having bounded derivatives imply that they are global analytic.

 Finally, observe that
\begin{equation*}
|\mathbb{E}[f(\xi_{T,x})]-\mathbb{E}_{\mathbb{Q}_{T}}[f(\xi_{T,x})]|\leq |\mathbb{E}[R_{m}(T,x,f)]|+|\mathbb{E}_{\mathbb{Q}_{T}}[R_{m}(T,x,f)]|
\end{equation*}
\begin{equation*}
+ \Bigg|(\mathbb{E}-\mathbb{E}_{\mathbb{Q}_{T}})\left[\sum_{(i_{1},...,i_{k})\in\mathcal{A}_{m}}V_{i_{1}}\cdots V_{i_{k}}f(x)\int_{0<t_{1}<,...,<t_{k}< T} d\hat{B}_{t_{1}}^{H,i_{1}}\cdot\cdot\cdot d\hat{B}_{t_{k}}^{H,i_{k}} \right]\Bigg|.
\end{equation*}
For the first two terms we have proved their upper bounds, while the last term is zero by definition of $\mathbb{Q}_{T}$, namely by the cubature formula. Then, we have
\begin{equation*}
\sup_{x\in\mathbb{R}^{N}}\Big|\mathbb{E}\left(f(\xi_{T,x})\right) -\sum_{j=1}^{n}\lambda_{j}f(\Phi_{T,x}(\omega_{j}))\Big|\leq\begin{cases}
Z_{1}(T)+Z_{3}(T)\quad\text{if $T\geq 1$},\\ Z_{2}(T)+Z_{4}(T)\quad\text{if $T< 1$}.
\end{cases} 
\end{equation*}
where
\begin{equation*}
Z_{1}(T):=C' T^{(m+2)/2}\sup_{(i_{1},...,i_{k})\in\mathcal{A}_{m}, (i_{0},...,i_{k})\notin\mathcal{A}_{m}}\|V_{i_{1}}\cdots V_{i_{k}}f\|_{\infty},
\end{equation*}
\begin{equation*}
Z_{2}(T):=C'' T^{2H}\sup_{(i_{1},...,i_{k})\in\mathcal{A}_{m}, (i_{0},...,i_{k})\notin\mathcal{A}_{m}}\|V_{i_{1}}\cdots V_{i_{k}}f\|_{\infty},
\end{equation*}
\begin{equation*}
Z_{3}(T):=C_{\gamma}T^{(m+2)/2}\frac{(dK)^{(m+2)/2}}{(((m+2)/2)!)^{1/2-\gamma}}\sup\limits_{x\in\mathbb{R}^{N}}M_{x}^{(m+2)/2}\sum_{k=0}^{\infty}\frac{(dM_{x}KT)^{k}}{(k!)^{1/2-\gamma}},
\end{equation*}
\begin{equation*}
Z_{4}(T):=C_{\gamma}T^{H(m+2)/2}\frac{(dK)^{(m+2)/2}}{(((m+2)/2)!)^{1/2-\gamma}}\sup\limits_{x\in\mathbb{R}^{N}}M_{x}^{(m+2)/2}\sum_{k=0}^{\infty}\frac{(dM_{x}KT^{H})^{k}}{(k!)^{1/2-\gamma}},
\end{equation*}
where $C',C'',C_{\gamma}>0$ are constants independent of T.
\end{proof}
\begin{rem}
Notice that our result differs from Proposition 3.2 of \cite{LV} for two reasons. First, in that proposition lack the consideration of the case $T<1$. Second, we do not have the possibility to use the It\^{o} isometry of Proposition 2.1 in \cite{LV}, but we have to rely on Theorem 3.4 in \cite{BauZha}. We also point out that in case sharper estimates for the remainder of the stochastic Taylor expansion for the fBm are found, these will directly improve our result.
\end{rem}
From the scaling properties of the fBm, we obtain the following simple proposition, which is an equivalent of Proposition 2.5 in \cite{LV} for the fBm.
\begin{pro}Assume that $\bar{\omega}_{1}, . . . ,\bar{\omega}_{n}\in C_{0,bv}^{0}([0,1],\mathbb{R}^{d})$ and the weights $\lambda_{1}, . . . ,\lambda_{n}$ define a cubature formula for the fBm on the Wiener space of degree $m$ for time 1. Define, for $j = 1,...,n$, the paths $\bar{\omega}^{(T)}_{j}\in C_{0,bv}^{0}([0,T],\mathbb{R}^{d})$ by $\bar{\omega}^{(T),i}_{j}(t)=T^{H} \bar{\omega}_{j}^{i}(t/T)$ for $i=1,...,d$. The paths $\bar{\omega}^{(T)}_{j}$ and the weights $\lambda_{j}$ , $j = 1,...,n$, then define a cubature formula for the fBm on the Wiener space of degree $m$ at time $T$.
\end{pro}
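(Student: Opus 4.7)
The plan is to show that after the rescaling $\bar{\omega}_j^{(T),i}(t) = T^H \bar{\omega}_j^i(t/T)$, both sides of the cubature identity at time $T$ reduce, via a single change of variables, to the cubature identity at time $1$ that is assumed to hold. So the key is to verify that both sides scale by the same common factor, namely the homogeneity factor $T^{kH + (1-H)\cdot\mathrm{card}\{l : i_l = 0\}}$ that already appeared after Definition~\ref{cubature-definition}.

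First, I would deal with the left-hand side. Using the scaling property $(i)$ of the fBm together with the fact that the $i_l = 0$ coordinates of $\hat{B}^{H}$ scale like $t$ rather than $t^H$, I would invoke the identity
\begin{equation*}
\mathbb{E}\left[\int_{\Delta^{k}[0,T]} d\hat{B}^{H,I}\right] = T^{kH + (1-H)\cdot\mathrm{card}\{l : i_l = 0\}}\,\mathbb{E}\left[\int_{\Delta^{k}[0,1]} d\hat{B}^{H,I}\right],
\end{equation*}
already noted in the paper. This takes care of the stochastic side with essentially no work.

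Next, I would perform the change of variables $t_l = T u_l$ in the deterministic iterated integral on the right-hand side. By the chain rule, for each non-zero letter $i_l \neq 0$ one has
$d\hat{\omega}_j^{(T),i_l}(t_l) = d\bigl(T^H \bar{\omega}_j^{i_l}(u_l)\bigr) = T^H\, d\bar{\omega}_j^{i_l}(u_l)$,
while for $i_l = 0$ one has $d\hat{\omega}_j^{(T),0}(t_l) = dt_l = T\, du_l$. Since $T = T^H \cdot T^{1-H}$, collecting the powers over $l = 1,\dots,k$ gives a global factor $T^{kH + (1-H)\cdot\mathrm{card}\{l : i_l = 0\}}$, and the simplex $\{0 < t_1 < \cdots < t_k < T\}$ becomes $\{0 < u_1 < \cdots < u_k < 1\}$. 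Hence
\begin{equation*}
\int_{0<t_1<\cdots<t_k<T} d\hat{\omega}_j^{(T),i_1}(t_1) \cdots d\hat{\omega}_j^{(T),i_k}(t_k) = T^{kH + (1-H)\cdot\mathrm{card}\{l : i_l = 0\}}\int_{0<u_1<\cdots<u_k<1} d\hat{\omega}_j^{i_1}(u_1) \cdots d\hat{\omega}_j^{i_k}(u_k).
\end{equation*}

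Finally, multiplying by $\lambda_j$ and summing over $j$, the common scaling factor factors out of both sides, and the hypothesis that $(\bar{\omega}_j, \lambda_j)_{j=1}^n$ is a cubature formula of degree $m$ at time $1$ yields the desired equality at time $T$ for every word $(i_1,\dots,i_k) \in \mathcal{A}_m$. There is no real obstacle: the whole argument is just bookkeeping of the homogeneity of fBm versus time. The only mild subtlety is keeping the exponent $kH + (1-H)\cdot\mathrm{card}\{l : i_l = 0\}$ straight when mixing Young-type differentials $d\bar{\omega}_j^{i_l}$ with the drift-type differentials $dt_l$, but this is precisely the same bookkeeping as in the remark following Definition~\ref{cubature-definition}, so no new ideas are needed.
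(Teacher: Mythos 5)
Your proof is correct and is exactly the argument the paper intends: the paper's own proof is the one-line remark that the result ``follows from the scaling property of the fractional Brownian motion,'' and you have simply spelled out that both sides of the cubature identity pick up the same factor $T^{kH+(1-H)\cdot\mathrm{card}\{l:i_{l}=0\}}$, the left side via the fBm scaling identity stated after Definition~\ref{cubature-definition} and the right side via the substitution $t_{l}=Tu_{l}$. No discrepancy with the paper's approach.
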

\begin{proof}
	It follows from the scaling property of the fractional Brownian motion.
\end{proof}

The last step of the cubature method is to extend it from small times to any times. This step is usually called concatenation step. Indeed, for the Bm case it is sufficient to divide the time interval in smaller subintervals and then concatenate the solutions obtained in each subinterval (see \cite{LV} for details).\\
In this article, we do not deal with the concatenation step. This is because for the fBm we do not have the independence of the increment property (\textit{i.e.} the Markov semigroup property) that we have for the Brownian motion case. Hence, the problem of the concatenation is an open and non-trivial problem. In Theorem \ref{pr4} we focused on the cubature formula for the one dimensional fBm up to degree 5. Due to the fact that the proof is tedious and mainly based on linear algebra computations we decided to only sketch it here.
\begin{proof}[Sketch of the proof (Theorem \ref{pr4})]
The first step is to compute the expected iterated integrals for different combinations of words, that is for different combinations of time path $t$ and fBm $B_{t}$. For example, for the degree $2H+2$ we need to compute:
\begin{equation*}
\mathbb{E}\left(\int_{0}^{1}\int_{0}^{u_{2}}dB_{u_{1}}du_{2}\right)=\int_{0}^{1}\mathbb{E}\left(B_{u_{2}}\right)du_{2}=0
\end{equation*}
and
\begin{equation*}
\mathbb{E}\left(\int_{0}^{1}\int_{0}^{u_{2}}du_{1}dB_{u_{2}}\right)=\mathbb{E}\left(\int_{0}^{1}u_{2}dB_{u_{2}}\right)=0.
\end{equation*}
The second step is to obtain the respective iterated integrals of the $\omega_{i}$ weighted by the $\lambda_{i}$. Hence, for degree $2H+2$ we have:
\begin{equation*}
0=\sum_{i=1}^{n}\lambda_{i}\int_{0}^{1}\int_{0}^{u_{2}}d\omega_{i,u_{1}}du_{2}\Rightarrow \sum_{i=1}^{n}\lambda_{i}\int_{0}^{1}\omega_{i,u_{2}}du_{2}=0
\end{equation*}
and
\begin{equation*}
0=\sum_{i=1}^{n}\lambda_{i}\int_{0}^{1}\int_{0}^{u_{2}}du_{1}d\omega_{i,u_{2}}\Rightarrow \sum_{i=1}^{n}\lambda_{i}\int_{0}^{1}u_{2}d\omega_{i,u_{2}}=0.
\end{equation*}
Then we have a system of equations where the unknowns are the $\omega_{i}$, the $\lambda_{i}$ and $n$. Solving this system give us our result.
\end{proof}
We conclude with a final remark on the solution(s) obtained for the cubature formula.
\begin{rem}
From the proof of this theorem we obtain two solutions of our system of equations, which determine the slope of our paths $\omega$s. The reason why we focused only on one solution is because Lyons and Victoir focused on that solution in their paper. They used MATHEMATICA to produce a solution, without explicitly justify their decision. However, both solutions are feasible. Hence, we have two valid solutions for the cubature formula of the fractional Brownian motion for this kind of structure (\textit{i.e.} piecewise linear paths with change of slopes at $t=\frac{1}{3}$ and $t=\frac{2}{3}$).
\\
The reason why they did not justify their decision is probably due to the fact that the structure adopted is already arbitrary and hence it is important to have a solution and not to have a particular or unique solution.
\end{rem}
\section{Acknowledgements}
The author would like to thank Horatio Boedihardjo for his assistance, his comments and constructive discussions through out the writing of this work. Further, the author would like to thank Dan Crisan and Thomas Cass for useful remarks and discussions, and Tobias Kuna for an important observation regarding Theorem \ref{pr3}. Finally, the author would like to thank the CDT in MPE for providing funding for this research.
\section{Appendix 0: Discussion of Proposition 5.1}
To clarify the result of Proposition 5.1 and its proof we explain as follows.
\\The key point is to decompose the permutation $\sigma$ of $\mathcal{G}_{2k}$ into a permutation $\tau\in\mathcal{G}_{2k-2}$ and a 2-permutation of $2k$. In particular, the number of permutations of the set $(t_{1},...,t_{2k})$ is equal to the number of permutations of the set of $2k-2$ elements times the $2$-permutations of $2k$ (where $k$-permutations of $n$ are the different ordered arrangements of a $k$-element subset of an $n$-set), that is
\begin{equation*}
(2k)!=(2k-2)!\cdot\frac{(2k)!}{(2k-2)!}
\end{equation*}
Now, consider the case $t_{\sigma(1)}=t_{4}$ and $t_{\sigma(1)}=t_{9}$ without any loss of generality. It is possible to see that by a simple change of variables (or better change of notation)
\begin{equation}
\int_{0<t_{1}<t_{2}<t_{3}<t_{5}<...<t_{8}<t_{10}<...<1}\prod_{l=2}^{k}\delta_{i_{\sigma(2l)},i_{\sigma(2l-1)}}|t_{\sigma(2l)}-t_{\sigma(2l-1)}|^{2H-2}\text{d}t_{\sigma(2l)}\text{d}t_{\sigma(2l-1)}
\end{equation}
\begin{equation}\label{integral}
= \int_{\Delta^{2k-2}[0,1]}\prod_{l=1}^{k-1}\delta_{i_{\tau(2l)},i_{\tau(2l-1)}}|s_{\tau(2l)}-s_{\tau(2l-1)}|^{2H-2}\text{d}s_{1}\cdots \text{d}s_{2k-2}
\end{equation}
where $\tau$ is a permutation of the set $(1,...,2k-2)$. The main problem can take place on the dependence of $\tau$ on $\sigma$. This is true. However, notice that we would get the same integral if the permutation of the $2k-2$ elements of the set $\{1,...,2k\}\setminus \{\sigma(1),\sigma(2)\}$ was the same. That is we would get same $\tau$ for different $\sigma$s. We will explain the argument in details in the next pages. We start with an example.
\\ \textbf{Example 1.} Assume that we have that our permutation $\sigma$ is just the identity, that is $\sigma(1)=1,\sigma(2)=2,...,\sigma(2k)=2k$, then we would have
\begin{equation}
\int_{0<t_{3}<...<t_{2k}<1}\prod_{l=2}^{k}\delta_{i_{\sigma(2l)},i_{\sigma(2l-1)}}|t_{\sigma(2l)}-t_{\sigma(2l-1)}|^{2H-2}\text{d}t_{\sigma(2l)}\text{d}t_{\sigma(2l-1)}
\end{equation}
\begin{equation}
=\int_{0<t_{3}<...<t_{2k}<1}\prod_{l=2}^{k}\delta_{i_{2l},i_{2l-1}}|t_{2l}-t_{2l-1}|^{2H-2}\text{d}t_{2l}\text{d}t_{2l-1}
\end{equation}
\begin{equation}
=\int_{\Delta^{2k-2}[0,1]}\prod_{l=1}^{k-1}\delta_{i_{2l},i_{2l-1}}|t_{2l}-t_{2l-1}|^{2H-2}\text{d}t_{2l}\text{d}t_{2l-1}.
\end{equation}
However, we can get the same integral if we have that $\sigma(1)=2k-1,\sigma(2)=2k$ and the rest is $\sigma(3)=1,\sigma(4)=2,...,\sigma(2k)=2k-2$. Indeed, in this case we would have
\begin{equation}
\int_{0<t_{1}<...<t_{2k-2}<1}\prod_{l=2}^{k}\delta_{i_{\sigma(2l)},i_{\sigma(2l-1)}}|t_{\sigma(2l)}-t_{\sigma(2l-1)}|^{2H-2}\text{d}t_{\sigma(2l)}\text{d}t_{\sigma(2l-1)}
\end{equation}
\begin{equation}
=\int_{0<t_{1}<...<t_{2k-2}<1}\prod_{l=1}^{k-1}\delta_{i_{2l},i_{2l-1}}|t_{2l}-t_{2l-1}|^{2H-2}\text{d}t_{2l}\text{d}t_{2l-1}
\end{equation}
\begin{equation}
=\int_{\Delta^{2k-2}[0,1]}\prod_{l=1}^{k-1}\delta_{i_{2l},i_{2l-1}}|t_{2l}-t_{2l-1}|^{2H-2}\text{d}t_{2l}\text{d}t_{2l-1}.
\end{equation}
Continuing with this argument, it is possible to observe that we get the same integral $(\ref{integral})$ if $\sigma(1)=4$ and $\sigma(2)=9$, while the permutation of the other $2k-2$ terms remains the same, which in this case is $\sigma(3)=1,\sigma(4)=2,\sigma(5)=3,\sigma(6)=5,\sigma(7)=6,\sigma(8)=7,\sigma(9)=8,\sigma(10)=10,\sigma(11)=11,...,\sigma(2k)=2k$. Then we have
\begin{equation}
\int_{0<t_{1}<t_{2}<t_{3}<t_{5}<...<t_{8}<t_{10}<...<1}\prod_{l=2}^{k}\delta_{i_{\sigma(2l)},i_{\sigma(2l-1)}}|t_{\sigma(2l)}-t_{\sigma(2l-1)}|^{2H-2}\text{d}t_{\sigma(2l)}\text{d}t_{\sigma(2l-1)}
\end{equation}
\begin{equation*}
=\int_{0<t_{1}<t_{2}<t_{3}<t_{5}<...<t_{8}<t_{10}<...<1}\delta_{i_{2},i_{1}}|t_{2}-t_{1}|^{2H-2}\delta_{i_{5},i_{3}}|t_{5}-t_{3}|^{2H-2}\delta_{i_{7},i_{6}}|t_{7}-t_{6}|^{2H-2}\delta_{i_{10},i_{8}}|t_{10}-t_{8}|^{2H-2}
\end{equation*}
\begin{equation}
\prod_{l=6}^{k}\delta_{i_{\sigma(2l)},i_{\sigma(2l-1)}}|t_{\sigma(2l)}-t_{\sigma(2l-1)}|^{2H-2}\text{d}t_{1}\text{d}t_{2}\text{d}t_{3}\text{d}t_{5}\cdots\text{d}t_{8}\text{d}t_{10}\cdots\text{d}t_{2k}
\end{equation}
and with a simple change of notation
\begin{equation}
=\int_{\Delta^{2k-2}[0,1]}\prod_{l=1}^{k-1}\delta_{i_{2l},i_{2l-1}}|t_{2l}-t_{2l-1}|^{2H-2}\text{d}t_{2l}\text{d}t_{2l-1}.
\end{equation}
From this example it is possible to see that if the permutation of the remaining $2k-2$ elements (\textit{i.e.} elements from the set $\{1,...,2k\}\setminus \{\sigma(1),\sigma(2)\}$) is the same it is not important which value the points $\sigma(1),\sigma(2)$ take, we always get the same integral $(\ref{integral})$. Hence, concerning the value of the integral, we have an independence between the position of $\sigma(1),\sigma(2)$ and the permutation of the $2k-2$ remaining elements.

The above example is based on the permutation (of the $2k-2$ elements) $\tau(1)=1,...,\tau(2k-2)=2k-2$. However, this was just because it was easier to explain it and less cumbersome in details. Indeed, the argument can be extended to any permutation $\tau\in\mathcal{G}_{2k-2}$ (\textit{i.e.} any permutation of the $2k-2$ elements). Indeed, we have the following second example.
\\\textbf{Example 2.} Fix a permutation $\tau$ of the set $(1,...,2k-2)$. Say for example $(1,3,5,7,2,4,6,8,9,...,2k-2)$. Then consider the permutation $\sigma$ of the $2k$ set $(1,2,3,5,7,9,4,6,8,10,11,...,2k)$. Then we have
\begin{equation}
\int_{0<t_{3}<...<t_{2k}<1}\prod_{l=2}^{k}\delta_{i_{\sigma(2l)},i_{\sigma(2l-1)}}|t_{\sigma(2l)}-t_{\sigma(2l-1)}|^{2H-2}\text{d}t_{\sigma(2l)}\text{d}t_{\sigma(2l-1)}
\end{equation}
(skipping the $\delta$s since they follows the terms $|t_{\sigma(2p)}-t_{\sigma(2p-1)}|^{2H-2}$ with the same order)
\begin{equation*}
=\int_{0<t_{3}<...<t_{2k}<1}|t_{5}-t_{3}|^{2H-2}|t_{9}-t_{7}|^{2H-2}|t_{6}-t_{4}|^{2H-2}|t_{10}-t_{8}|^{2H-2}
\end{equation*}
\begin{equation}
\prod_{l=6}^{k}|t_{\sigma(2l)}-t_{\sigma(2l-1)}|^{2H-2}\text{d}t_{3}\cdots\text{d}t_{2k}
\end{equation}
\begin{equation}\label{integral2}
= \int_{\Delta^{2k-2}[0,1]}\prod_{l=1}^{k-1}\delta_{i_{\tau(2l)},i_{\tau(2l-1)}}|s_{\tau(2l)}-s_{\tau(2l-1)}|^{2H-2}\text{d}s_{1}\cdots \text{d}s_{2k-2}.
\end{equation}
Now, notice that we can get the same integral $(\ref{integral2})$ from the permutation $\tilde{\sigma}$: $(2k-1,2k,1,3,5,7,2,4,6,8,9,...,2k-2)$. This is because
\begin{equation}
\int_{0<t_{1}<...<t_{2k-2}<1}\prod_{l=2}^{k}\delta_{i_{\sigma(2l)},i_{\sigma(2l-1)}}|t_{\sigma(2l)}-t_{\sigma(2l-1)}|^{2H-2}\text{d}t_{\sigma(2l)}\text{d}t_{\sigma(2l-1)}
\end{equation}
\begin{equation*}
= \int_{0<t_{1}<...<t_{2k-2}<1}|t_{3}-t_{1}|^{2H-2}|t_{7}-t_{5}|^{2H-2}|t_{4}-t_{2}|^{2H-2}|t_{8}-t_{6}|^{2H-2}
\end{equation*}
\begin{equation}
\prod_{l=5}^{k-1}|t_{\sigma(2l)}-t_{\sigma(2l-1)}|^{2H-2}\text{d}t_{1}\cdots\text{d}t_{2k-2}
\end{equation}
\begin{equation}
= \int_{\Delta^{2k-2}[0,1]}\prod_{l=1}^{k-1}\delta_{i_{\tau(2l)},i_{\tau(2l-1)}}|t_{\tau(2l)}-t_{\tau(2l-1)}|^{2H-2}\text{d}t_{1}\cdots \text{d}t_{2k-2}.
\end{equation}
Again we can get the same integral $(\ref{integral2})$ from the permutation $\hat{\sigma}$: $(4,9,1,3,6,8,2,5,7,10,11,...,2k)$ (here $\sigma(1)=4,\sigma(2)=9$). This is because
\begin{equation}
\int_{0<t_{1}<t_{2}<t_{3}<t_{5}<...<t_{8}<t_{10}<...<1}\prod_{l=2}^{k}\delta_{i_{\sigma(2l)},i_{\sigma(2l-1)}}|t_{\sigma(2l)}-t_{\sigma(2l-1)}|^{2H-2}\text{d}t_{\sigma(2l)}\text{d}t_{\sigma(2l-1)}
\end{equation}
\begin{equation*}
=\int_{0<t_{1}<t_{2}<t_{3}<t_{5}<...<t_{8}<t_{10}<...<1}|t_{3}-t_{1}|^{2H-2}|t_{8}-t_{6}|^{2H-2}|t_{5}-t_{2}|^{2H-2}|t_{10}-t_{7}|^{2H-2}
\end{equation*}
\begin{equation}
\prod_{l=6}^{k}|t_{\sigma(2l)}-t_{\sigma(2l-1)}|^{2H-2}\text{d}t_{1}\text{d}t_{2}\text{d}t_{3}\text{d}t_{5}\cdots\text{d}t_{8}\text{d}t_{10}\cdots\text{d}t_{2k}
\end{equation}
\begin{equation}
= \int_{\Delta^{2k-2}[0,1]}\prod_{l=1}^{k-1}\delta_{i_{\tau(2l)},i_{\tau(2l-1)}}|s_{\tau(2l)}-s_{\tau(2l-1)}|^{2H-2}\text{d}s_{1}\cdots \text{d}s_{2k-2}.
\end{equation}
The principle is the following. Take a permutation $\tau$ of the set of $2k-2$ elements, in our Example 2 was $(1,3,5,7,2,4,6,8,9,...,2k-2)$. Take two points $\sigma(1),\sigma(2)$, in the last case of Example 2 $\sigma(1)=4,\sigma(2)=9$. We need to find the permutations $\sigma\in\mathcal{G}_{2k}$ such that we have the same integral $(\ref{integral2})$. How can we find them? To get these permutations $\sigma$s we need just to follow the following algorithm. First, we put $\sigma(1),\sigma(2)$ for the first two positions in the $2k$ set, while for the others we put a \textit{modification} of $(1,3,5,7,2,4,6,8,9,...,2k-2)$, which depends on the value of $\sigma(1),\sigma(2)$. We denote this \textit{modification} $(y_{1},...,y_{2k-2})$. Hence, we have $(\sigma(1),\sigma(2),y_{1},...,y_{2k-2})$. In particular, the \textit{modification} follows this rule. Assume $\sigma(1)<\sigma(2)$ without loss of generality and let $x$ be an element of the set $(1,3,5,7,2,4,6,8,9,...,2k-2)$ so $x_{1}=1,x_{2}=3,x_{3}=5,...$. If the values below $x_{i}<\sigma(1)$ then $y_{i}=x_{i}$ (this is the case for $1,3,2$ in our example). If $\sigma(1)\leq x_{i}<\sigma(2)-1$ then $y_{i}=x_{i}+1$ (this is the case for $5,7,4,6$ in our example).  If $x_{i}\geq\sigma(2)-1$ then $y_{i}=x_{i}+2$ (this is the case for $8,9,10,...,2k-2$ in our example).
\\
\\
Now there are two questions to be answered. The first question is: we say that there are different possibilities (\textit{i.e.} different $\sigma$s) to get the same integral (say for example integral $(\ref{integral2})$) but how many exactly? The answer is $\frac{(2k)!}{(2k-2)!}=2k(2k-1)$ which is the arrangements of a fixed length $2$ of elements taken from a given set of size $2k$, in other words, these $2$-permutations of $2k$ are the different ordered arrangements of a $2$-element subset of an $2k$-set (see Wikipedia or Wolfram Alpha on permutation). For example, $2$-permutations of $3$ are $3!/(3-2)!=6$. Indeed, consider the set $\{1,2,3\}$ then we have $\{1,2\},\{1,3\},\{2,3\},\{2,1\},\{3,1\},\{3,2\}$. In our case we have $2$-permutations on $2k$ because the $2$ comes from $\sigma(1),\sigma(2)$ and the $2k$ from the size of the set.
\\
This implies that there are $2k(2k-1)$ different permutations in $\mathcal{G}_{2k}$ (call them $\sigma_{1},\sigma_{2},...,\sigma_{2k(2k-1)}$) such that we get the same integral in terms of a permutation $\tau$ (like integral $(\ref{integral2})$). Hence, we have
\begin{equation*}
\sum_{j=1}^{2k(2k-1)}\int_{0<t_{1}<...<t_{\sigma_{j}(1)-1}<t_{\sigma_{j}(1)+1}<...<t_{\sigma_{j}(2)-1}<t_{\sigma_{j}(2)+1}<...<1}
\end{equation*}
\begin{equation}
\prod_{l=2}^{k}\delta_{i_{\sigma_{j}(2l)},i_{\sigma_{j}(2l-1)}}|t_{\sigma_{j}(2l)}-t_{\sigma_{j}(2l-1)}|^{2H-2}\text{d}t_{\sigma_{j}(2l)}\text{d}t_{\sigma_{j}(2l-1)}
\end{equation}
\begin{equation}
=2k(2k-1)\int_{\Delta^{2k-2}[0,1]}\prod_{l=1}^{k-1}\delta_{i_{\tau(2l)},i_{\tau(2l-1)}}|s_{\tau(2l)}-s_{\tau(2l-1)}|^{2H-2}\text{d}s_{1}\cdots \text{d}s_{2k-2}.
\end{equation}
\\
\\
The second and last question is the following: by using the arguments above can we cover all the permutations $\sigma\in\mathcal{G}_{2k}$? In other words, can we reformulate all the permutations $\sigma\in\mathcal{G}_{2k}$ in terms of the permutations $\tau\in\mathcal{G}_{2k-2}$ times $2k(2k-1)$? The answer is yes and in particular we have the following equality
\begin{equation*}
\sum_{\sigma\in\mathcal{G}_{2k}}\int_{0<t_{1}<...<t_{\sigma(1)-1}<t_{\sigma(1)+1}<...<t_{\sigma(2)-1}<t_{\sigma(2)+1}<...<1}
\end{equation*}
\begin{equation}
\prod_{l=2}^{k}\delta_{i_{\sigma(2l)},i_{\sigma(2l-1)}}|t_{\sigma(2l)}-t_{\sigma(2l-1)}|^{2H-2}\text{d}t_{\sigma(2l)}\text{d}t_{\sigma(2l-1)}
\end{equation}
\begin{equation}
=2k(2k-1)\sum_{\tau\in\mathcal{G}_{2k-2}}\int_{\Delta^{2k-2}[0,1]}\prod_{l=1}^{k-1}\delta_{i_{\tau(2l)},i_{\tau(2l-1)}}|s_{\tau(2l)}-s_{\tau(2l-1)}|^{2H-2}\text{d}s_{1}\cdots \text{d}s_{2k-2}.
\end{equation}
This is because we can decompose the permutations of $\mathcal{G}_{2k}$ (the $\sigma$s) into permutations of $\sigma(1),\sigma(2)$ (which are $2k(2k-1)$ permutations for each permutation $\tau$ and they do not modify the value of the integral) and the permutations of $\mathcal{G}_{2k-2}$ (the $\tau$s). Indeed, for $\mathcal{G}_{2k}$ we have $(2k)!$ permutations (and of course they are all different from each other) and for the other we have $(2k-2)!\cdot2k(2k-1)=(2k)!$ (and they are also all different from each other).
\\
Again, the key point is to decompose the permutation $\sigma$ of $\mathcal{G}_{2k}$ into a permutation $\tau\in\mathcal{G}_{2k-2}$ and a 2-permutation on $2k$.
\section{Appendix 1: Iterated integrals for the cubature method}
In this appendix we study the iterated integrals with respect to the path $(t,B^{H}_{t})$. Notice that we will use the notation $B_{t}:=B^{H}_{t}$. We will not consider the case of iterated integrals of time solely since they bring no information for the construction of the cubature. Further, we will use many times the following formula for the fractional Brownian motion:
\begin{equation*}
\mathbb{E}\left((B_{t}-B_{s})^{2k}\right)=\dfrac{(2k)!}{k!2^{k}}|t-s|^{2Hk}
\end{equation*}
In particular, we have:
\\
Degree$=2H$:
\begin{equation*}
\mathbb{E}\left(\int_{0}^{1}dB_{u_{1}}\right)=0
\end{equation*}
\\
Degree$=4H$:
\begin{equation*}
\mathbb{E}\left(\int_{0}^{1}\int_{0}^{u_{2}}dB_{u_{1}}dB_{2}\right)=\mathbb{E}\left(\dfrac{B_{1}^{2}}{2}\right)=\dfrac{1}{2}
\end{equation*}
\\
Degree$=2H+2$:
\begin{equation*}
\mathbb{E}\left(\int_{0}^{1}\int_{0}^{u_{2}}dB_{u_{1}}du_{2}\right)=\int_{0}^{1}\mathbb{E}\left(B_{u_{2}}\right)du_{2}=0
\end{equation*}
and
\begin{equation*}
\mathbb{E}\left(\int_{0}^{1}\int_{0}^{u_{2}}du_{1}dB_{u_{2}}\right)=\mathbb{E}\left(\int_{0}^{1}u_{2}dB_{u_{2}}\right)=0
\end{equation*}
\\
Degree$=6H$:
\begin{equation*}
\mathbb{E}\left(\int_{0}^{1}\int_{0}^{u_{3}}\int_{0}^{u_{2}}dB_{u_{1}}dB_{2}dB_{3}\right)=\mathbb{E}\left(\dfrac{B_{1}^{3}}{3!}\right)=0
\end{equation*}
\\
Degree$=2+4H$:
\begin{equation*}
\mathbb{E}\left(\int_{0}^{1}\int_{0}^{u_{3}}\int_{0}^{u_{2}}dB_{u_{1}}dB_{u_{2}}du_{3}\right)=\int_{0}^{1}\mathbb{E}\left(\dfrac{B_{u_{3}}^{2}}{2}\right)du_{3}=\int_{0}^{1}\dfrac{u_{3}^{2H}}{2}du_{3}=\dfrac{1}{2(2H+1)}
\end{equation*}
and
\begin{equation*}
\mathbb{E}\left(\int_{0}^{1}\int_{0}^{u_{3}}\int_{0}^{u_{2}}dB_{u_{1}}du_{2}dB_{u_{3}}\right)=\mathbb{E}\left(\int_{0}^{1}\int_{0}^{u_{3}}B_{u_{2}}du_{2}dB_{u_{3}}\right)=\mathbb{E}\left(\int_{0}^{1}\int_{u_{2}}^{1}B_{u_{2}}dB_{u_{3}}du_{2}\right)
\end{equation*}
\begin{equation*}
=\int_{0}^{1}\mathbb{E}\left(B_{u_{2}}(B_{1}-B_{u_{2}})\right)du_{3}=\int_{0}^{1}\dfrac{1}{2}\left(1-u_{2}^{2H}-(1-u_{2})^{2H}\right)du_{2}=\dfrac{1}{2}-\dfrac{2}{2(2H+1)}=\dfrac{2H-1}{2(2H+1)}
\end{equation*}
and
\begin{equation*}
\mathbb{E}\left(\int_{0}^{1}\int_{0}^{u_{3}}\int_{0}^{u_{2}}du_{1}dB_{u_{2}}dB_{u_{3}}\right)=\mathbb{E}\left(\int_{0}^{1}\int_{0}^{u_{3}}\int_{u_{1}}^{u_{3}}dB_{u_{2}}du_{1}dB_{u_{3}}\right)=\mathbb{E}\left(\int_{0}^{1}\int_{0}^{u_{3}}B_{u_{3}}-B_{u_{1}}du_{1}dB_{u_{3}}\right)
\end{equation*}
\begin{equation*}
=\mathbb{E}\left(\int_{0}^{1}\int_{u_{1}}^{1}B_{u_{3}}-B_{u_{1}}dB_{u_{3}}du_{1}\right)=\int_{0}^{1}\mathbb{E}\left(\dfrac{B_{1}^{2}}{2}-\dfrac{B_{u_{1}}^{2}}{2}-B_{u_{1}}(B_{1}-B_{u_{1}})\right)du_{1}
\end{equation*}
\begin{equation*}
=\int_{0}^{1}\mathbb{E}\left(\dfrac{B_{1}^{2}}{2}+\dfrac{B_{u_{1}}^{2}}{2}-B_{u_{1}}B_{1}\right)du_{1}=\int_{0}^{1}\dfrac{1}{2}+\dfrac{u_{1}^{2H}}{2}-\dfrac{1}{2}(1+u_{1}^{2H}-(1-u_{1})^{2H})du_{1}
\end{equation*}
\begin{equation*}
=\dfrac{1}{2}\int_{0}^{1}(1-u_{1})^{2H}du_{1}=\dfrac{1}{2(2H+1)}
\end{equation*}
\\
Degree$=8H$:
\begin{equation*}
\mathbb{E}\left(\int_{0}^{1}\int_{0}^{u_{4}}\int_{0}^{u_{3}}\int_{0}^{u_{2}}dB_{u_{1}}dB_{2}dB_{3}dB_{4}\right)=\mathbb{E}\left(\dfrac{B_{1}^{4}}{4!}\right)=\dfrac{1}{8}
\end{equation*}
\\
Degree$=4+2H$:
\begin{equation*}
\mathbb{E}\left(\int_{0}^{1}\int_{0}^{u_{3}}\int_{0}^{u_{2}}dB_{u_{1}}du_{2}du_{3}\right)=\int_{0}^{1}\int_{0}^{u_{3}}\mathbb{E}\left(B_{u_{2}}\right)du_{2}du_{3}=0
\end{equation*}
and
\begin{equation*}
\mathbb{E}\left(\int_{0}^{1}\int_{0}^{u_{3}}\int_{0}^{u_{2}}du_{1}dB_{u_{2}}du_{3}\right)=\mathbb{E}\left(\int_{0}^{1}\int_{0}^{u_{3}}\int_{u_{1}}^{u_{3}}dB_{u_{2}}du_{1}du_{3}\right)=\int_{0}^{1}\int_{0}^{u_{3}}\mathbb{E}\left(B_{u_{3}}-B_{u_{1}}\right)du_{1}du_{3}=0
\end{equation*}
and
\begin{equation*}
\mathbb{E}\left(\int_{0}^{1}\int_{0}^{u_{3}}\int_{0}^{u_{2}}du_{1}du_{2}dB_{u_{3}}\right)=\mathbb{E}\left(\int_{0}^{1}\dfrac{u_{3}^{2}}{2}dB_{u_{3}}\right)=0
\end{equation*}
Degree$=6H+2$:
\begin{equation*}
\mathbb{E}\left(\int_{0}^{1}\int_{0}^{u_{4}}\int_{0}^{u_{3}}\int_{0}^{u_{2}}dB_{u_{1}}dB_{u_{2}}dB_{u_{3}}du_{4}\right)=\int_{0}^{1}\mathbb{E}\left(\dfrac{B_{u_{4}}^{3}}{3!}\right)du_{4}=0
\end{equation*}
and
\begin{equation*}
\mathbb{E}\left(\int_{0}^{1}\int_{0}^{u_{4}}\int_{0}^{u_{3}}\int_{0}^{u_{2}}dB_{u_{1}}dB_{u_{2}}du_{3}dB_{u_{4}}\right)=\mathbb{E}\left(\int_{0}^{1}\int_{0}^{u_{4}}\dfrac{B_{u_{3}}^{2}}{2!}du_{3}dB_{u_{4}}\right)=\mathbb{E}\left(\int_{0}^{1}\int_{u_{3}}^{1}\dfrac{B_{u_{3}}^{2}}{2!}dB_{u_{4}}du_{3}\right)
\end{equation*}
\begin{equation*}
=\int_{0}^{1}\mathbb{E}\left((B_{1}-B_{u_{3}})\dfrac{B_{u_{3}}^{2}}{2!}\right)du_{3}=0
\end{equation*}
and
\begin{equation*}
\mathbb{E}\left(\int_{0}^{1}\int_{0}^{u_{4}}\int_{0}^{u_{3}}\int_{0}^{u_{2}}dB_{u_{1}}du_{2}dB_{u_{3}}dB_{u_{4}}\right)=\mathbb{E}\left(\int_{0}^{1}\int_{0}^{u_{4}}\int_{0}^{u_{3}}B_{u_{2}}du_{2}dB_{u_{3}}dB_{u_{4}}\right)
\end{equation*}
\begin{equation*}
=\mathbb{E}\left(\int_{0}^{1}\int_{0}^{u_{4}}\int_{u_{2}}^{u_{4}}B_{u_{2}}dB_{u_{3}}du_{2}dB_{u_{4}}\right)=\mathbb{E}\left(\int_{0}^{1}\int_{0}^{u_{4}}(B_{u_{4}}-B_{u_{2}})B_{u_{2}}du_{2}dB_{u_{4}}\right)
\end{equation*}
\begin{equation*}
=\mathbb{E}\left(\int_{0}^{1}\int_{u_{2}}^{1}(B_{u_{4}}-B_{u_{2}})B_{u_{2}}dB_{u_{4}}du_{2}\right)=\int_{0}^{1}\mathbb{E}\left(\dfrac{B_{u_{2}}}{2}(B_{1}-B_{u_{2}})^{2}-B_{u_{2}}^{2}(B_{1}-B_{u_{2}})\right)du_{2}=0
\end{equation*}
and
\begin{equation*}
\mathbb{E}\left(\int_{0}^{1}\int_{0}^{u_{4}}\int_{0}^{u_{3}}\int_{0}^{u_{2}}du_{1}dB_{u_{2}}dB_{u_{3}}dB_{u_{4}}\right)=\mathbb{E}\left(\int_{0}^{1}\int_{0}^{u_{4}}\int_{0}^{u_{3}}\int_{u_{1}}^{u_{3}}dB_{u_{2}}du_{1}dB_{u_{3}}dB_{u_{4}}\right)
\end{equation*}
\begin{equation*}
=\mathbb{E}\left(\int_{0}^{1}\int_{0}^{u_{4}}\int_{0}^{u_{3}}(B_{u_{3}}-B_{u_{1}})du_{1}dB_{u_{3}}dB_{u_{4}}\right)=\mathbb{E}\left(\int_{0}^{1}\int_{0}^{u_{4}}\int_{u_{1}}^{u_{4}}(B_{u_{3}}-B_{u_{1}})dB_{u_{3}}du_{1}dB_{u_{4}}\right)
\end{equation*}
\begin{equation*}
=\mathbb{E}\left(\int_{0}^{1}\int_{0}^{u_{4}}\dfrac{B_{u_{4}}^{2}}{2}-\dfrac{B_{u_{1}}^{2}}{2}-B_{u_{1}}(B_{u_{4}}-B_{u_{1}})du_{1}dB_{u_{4}}\right)=\mathbb{E}\left(\int_{0}^{1}\int_{u_{1}}^{1}\dfrac{B_{u_{4}}^{2}}{2}-\dfrac{B_{u_{1}}^{2}}{2}-B_{u_{1}}(B_{u_{4}}-B_{u_{1}})dB_{u_{4}}du_{1}\right)
\end{equation*}
\begin{equation*}
=\int_{0}^{1}\mathbb{E}\left(\dfrac{B_{1}^{3}}{3!}-\dfrac{B_{u_{1}}^{3}}{3!}-\dfrac{B_{u_{1}}^{2}}{2}(B_{1}-B_{u_{1}})-B_{u_{1}}\left(\dfrac{B_{1}^{2}}{2}-\dfrac{B_{u_{1}}^{2}}{2}\right)+B_{u_{1}}^{2}(B_{1}-B_{u_{1}})\right)du_{1}=0
\end{equation*}
Degree$=10H$:
\begin{equation*}
\mathbb{E}\left(\int_{0}^{1}\int_{0}^{u_{5}}\int_{0}^{u_{4}}\int_{0}^{u_{3}}\int_{0}^{u_{2}}dB_{u_{1}}dB_{2}dB_{3}dB_{4}dB_{5}\right)=\mathbb{E}\left(\dfrac{B_{1}^{5}}{5!}\right)=0
\end{equation*}
\\
\\
Now we need to match them with the corresponding deterministic iterated integrals. In other words, we have
\\
\\
For degree$=2H$:
\begin{equation*}
0=\sum_{i=1}^{n}\lambda_{i}\int_{0}^{1}d\omega_{u_{1},i}\Rightarrow \sum_{i=1}^{n}\lambda_{i}\omega_{i,1}=0
\end{equation*}
\\
For degree$=4H$:
\begin{equation*}
\dfrac{1}{2}=\sum_{i=1}^{n}\lambda_{i}\int_{0}^{1}\int_{0}^{u_{2}}d\omega_{i,u_{1}}d\omega_{i,u_{2}}\Rightarrow \dfrac{1}{2}=\dfrac{1}{2}\sum_{i=1}^{n}\lambda_{i}\omega_{i,1}^{2}\Rightarrow \sum_{i=1}^{n}\lambda_{i}\omega_{i,1}^{2}=1
\end{equation*}
\\
For degree$=2H+2$:
\begin{equation*}
0=\sum_{i=1}^{n}\lambda_{i}\int_{0}^{1}\int_{0}^{u_{2}}d\omega_{i,u_{1}}du_{2}\Rightarrow \sum_{i=1}^{n}\lambda_{i}\int_{0}^{1}\omega_{i,u_{2}}du_{2}=0
\end{equation*}
and
\begin{equation*}
0=\sum_{i=1}^{n}\lambda_{i}\int_{0}^{1}\int_{0}^{u_{2}}du_{1}d\omega_{i,u_{2}}\Rightarrow \sum_{i=1}^{n}\lambda_{i}\int_{0}^{1}u_{2}d\omega_{i,u_{2}}=0
\end{equation*}
For degree$=6H$:
\begin{equation*}
0=\sum_{i=1}^{n}\lambda_{i}\int_{0}^{1}\int_{0}^{u_{3}}\int_{0}^{u_{2}}d\omega_{i,u_{1}}d\omega_{i,u_{2}}d\omega_{i,u_{3}}\Rightarrow \dfrac{1}{3!}\sum_{i=1}^{n}\lambda_{i}\omega_{i,1}^{3}=0\Rightarrow
\sum_{i=1}^{n}\lambda_{i}\omega_{i,1}^{3}=0
\end{equation*}
\\
For degree$=4H+2$:
\begin{equation*}
\dfrac{1}{2(2H+1)}=\sum_{i=1}^{n}\lambda_{i}\int_{0}^{1}\int_{0}^{u_{3}}\int_{0}^{u_{2}}d\omega_{i,u_{1}}d\omega_{i,u_{2}}du_{3}\Rightarrow \sum_{i=1}^{n}\lambda_{i}\int_{0}^{1}\omega_{i,u_{3}}^{2}du_{3}=\dfrac{1}{2H+1}
\end{equation*}
and
\begin{equation*}
\dfrac{2H-1}{2(2H+1)}=\sum_{i=1}^{n}\lambda_{i}\int_{0}^{1}\int_{0}^{u_{3}}\int_{0}^{u_{2}}d\omega_{i,u_{1}}du_{2}d\omega_{i,u_{3}}\Rightarrow \sum_{i=1}^{n}\lambda_{i}\int_{0}^{1}\int_{0}^{u_{3}}\omega_{i,u_{2}}du_{2}d\omega_{i,u_{3}}=\dfrac{2H-1}{2(2H+1)}
\end{equation*}
and
\begin{equation*}
\dfrac{1}{2(2H+1)}=\sum_{i=1}^{n}\lambda_{i}\int_{0}^{1}\int_{0}^{u_{3}}\int_{0}^{u_{2}}du_{1}d\omega_{i,u_{2}}d\omega_{i,u_{3}}\Rightarrow \sum_{i=1}^{n}\lambda_{i}\int_{0}^{1}\int_{0}^{u_{3}}u_{2}d\omega_{i,u_{2}}d\omega_{i,u_{3}}=\dfrac{1}{2(2H+1)}
\end{equation*}
For degree$=8H$:
\begin{equation*}
\dfrac{1}{8}=\sum_{i=1}^{n}\lambda_{i}\int_{0}^{1}\int_{0}^{u_{4}}\int_{0}^{u_{3}}\int_{0}^{u_{2}}d\omega_{i,u_{1}}d\omega_{i,u_{2}}d\omega_{i,u_{3}}d\omega_{i,u_{4}}\Rightarrow \dfrac{1}{4!}\sum_{i=1}^{n}\lambda_{i}\omega_{i,1}^{4}=\dfrac{1}{8}\Rightarrow
\sum_{i=1}^{n}\lambda_{i}\omega_{i,1}^{4}=3
\end{equation*}
\\
For degree$=2H+4$:
\begin{equation*}
0=\sum_{i=1}^{n}\lambda_{i}\int_{0}^{1}\int_{0}^{u_{3}}\int_{0}^{u_{2}}d\omega_{i,u_{1}}du_{2}du_{3}\Rightarrow \sum_{i=1}^{n}\lambda_{i}\int_{0}^{1}\int_{0}^{u_{3}}\omega_{i,u_{2}}du_{2}du_{3}=0
\end{equation*}
and
\begin{equation*}
0=\sum_{i=1}^{n}\lambda_{i}\int_{0}^{1}\int_{0}^{u_{3}}\int_{0}^{u_{2}}du_{1}d\omega_{i,u_{2}}du_{3}\Rightarrow \sum_{i=1}^{n}\lambda_{i}\int_{0}^{1}\int_{0}^{u_{3}}u_{2}d\omega_{i,u_{2}}du_{3}=0
\end{equation*}
and
\begin{equation*}
0=\sum_{i=1}^{n}\lambda_{i}\int_{0}^{1}\int_{0}^{u_{3}}\int_{0}^{u_{2}}du_{1}du_{2}d\omega_{i,u_{3}}\Rightarrow \sum_{i=1}^{n}\lambda_{i}\int_{0}^{1}\dfrac{u_{3}^{2}}{2}d\omega_{i,u_{3}}=0\Rightarrow\sum_{i=1}^{n}\lambda_{i}\int_{0}^{1}u_{3}^{2}d\omega_{i,u_{3}}=0
\end{equation*}
For degree$=6H+2$:
\begin{equation*}
0=\sum_{i=1}^{n}\lambda_{i}\int_{0}^{1}\int_{0}^{u_{4}}\int_{0}^{u_{3}}\int_{0}^{u_{2}}d\omega_{i,u_{1}}d\omega_{i,u_{2}}d\omega_{i,u_{3}}du_{4}\Rightarrow \dfrac{1}{3!}\sum_{i=1}^{n}\lambda_{i}\int_{0}^{1}\omega_{i,u_{4}}^{3}du_{4}=0\Rightarrow \sum_{i=1}^{n}\lambda_{i}\int_{0}^{1}\omega_{i,u_{4}}^{3}du_{4}=0
\end{equation*}
and
\begin{equation*}
0=\sum_{i=1}^{n}\lambda_{i}\int_{0}^{1}\int_{0}^{u_{4}}\int_{0}^{u_{3}}\int_{0}^{u_{2}}d\omega_{i,u_{1}}d\omega_{i,u_{2}}du_{3}d\omega_{i,u_{4}}\Rightarrow \dfrac{1}{2}\sum_{i=1}^{n}\lambda_{i}\int_{0}^{1}\int_{0}^{u_{4}}\omega_{i,u_{3}}^{2}du_{3}d\omega_{i,u_{4}}=0
\end{equation*}
\begin{equation*}
\Rightarrow \sum_{i=1}^{n}\lambda_{i}\int_{0}^{1}\int_{0}^{u_{4}}\omega_{i,u_{3}}^{3}du_{3}d\omega_{i,u_{4}}=0
\end{equation*}
and
\begin{equation*}
0=\sum_{i=1}^{n}\lambda_{i}\int_{0}^{1}\int_{0}^{u_{4}}\int_{0}^{u_{3}}\int_{0}^{u_{2}}d\omega_{i,u_{1}}du_{2}d\omega_{i,u_{3}}d\omega_{i,u_{4}}\Rightarrow \sum_{i=1}^{n}\lambda_{i}\int_{0}^{1}\int_{0}^{u_{4}}\int_{0}^{u_{3}}\omega_{i,u_{2}}du_{2}d\omega_{i,u_{3}}d\omega_{i,u_{4}}=0
\end{equation*}
and
\begin{equation*}
0=\sum_{i=1}^{n}\lambda_{i}\int_{0}^{1}\int_{0}^{u_{4}}\int_{0}^{u_{3}}\int_{0}^{u_{2}}du_{1}d\omega_{i,u_{2}}d\omega_{i,u_{3}}d\omega_{i,u_{4}}\Rightarrow \sum_{i=1}^{n}\lambda_{i}\int_{0}^{1}\int_{0}^{u_{4}}\int_{0}^{u_{3}}u_{2}d\omega_{i,u_{2}}d\omega_{i,u_{3}}d\omega_{i,u_{4}}=0
\end{equation*}
For degree$=10H$:
\begin{equation*}
0=\sum_{i=1}^{n}\lambda_{i}\int_{0}^{1}\int_{0}^{u_{5}}\int_{0}^{u_{4}}\int_{0}^{u_{3}}\int_{0}^{u_{2}}d\omega_{i,u_{1}}d\omega_{i,u_{2}}d\omega_{i,u_{3}}d\omega_{i,u_{4}}d\omega_{i,u_{5}}\Rightarrow \dfrac{1}{5!}\sum_{i=1}^{n}\lambda_{i}\omega_{i,1}^{5}=0\Rightarrow
\sum_{i=1}^{n}\lambda_{i}\omega_{i,1}^{5}=0
\end{equation*}
\section{Appendix 2: Extended proof of Theorem 2.6}
In this appendix we present the extended proof of \textbf{Theorem 2.6}.
\begin{proof}
	Let us now start to investigate the form of the functions $\omega_{j}$ for $j=1,...,n$.
	\\
	From the Appendix it is possible to see that we have 17 equations, hence we need to have 17 unknowns.
	\\
	Following the work done by Lyons and Victoir we are going to choose two symmetric paths and one path which has constant value zero. This reduces the number of equations to 5.
	Hence, assume that there are two continuous functions, $\omega_{1,s}$ and $\omega_{2,s}$, with the property that $\omega_{1,s}=-\omega_{2,s}$ for $s\in[0,1]$. Further assume that there is a third path $\omega_{3,s}=0$ for $s\in[0,1]$. With this formulation only 5 equations need to be taken into consideration since the other 12 are already satisfied. The 5 equations are the following. First,
	\begin{equation}\label{2}
	\dfrac{1}{2}=\sum_{i=1}^{n}\lambda_{i}\int_{0}^{1}\int_{0}^{u_{2}}d\omega_{i,u_{1}}d\omega_{i,u_{2}}\Rightarrow \dfrac{1}{2}=\dfrac{1}{2}\sum_{i=1}^{n}\lambda_{i}\omega_{i,1}^{2}\Rightarrow \sum_{i=1}^{n}\lambda_{i}\omega_{i,1}^{2}=1
	\end{equation}
	Second,
	\begin{equation}\label{6}
	\dfrac{1}{2(2H+1)}=\sum_{i=1}^{n}\lambda_{i}\int_{0}^{1}\int_{0}^{u_{3}}\int_{0}^{u_{2}}d\omega_{i,u_{1}}d\omega_{i,u_{2}}du_{3}\Rightarrow \sum_{i=1}^{n}\lambda_{i}\int_{0}^{1}\omega_{i,u_{3}}^{2}du_{3}=\dfrac{1}{2H+1}
	\end{equation}
	Third,
	\begin{equation}\label{7}
	\dfrac{2H-1}{2(2H+1)}=\sum_{i=1}^{n}\lambda_{i}\int_{0}^{1}\int_{0}^{u_{3}}\int_{0}^{u_{2}}d\omega_{i,u_{1}}du_{2}d\omega_{i,u_{3}}\Rightarrow \sum_{i=1}^{n}\lambda_{i}\int_{0}^{1}\int_{0}^{u_{3}}\omega_{i,u_{2}}du_{2}d\omega_{i,u_{3}}=\dfrac{2H-1}{2(2H+1)}
	\end{equation}
	Fourth,
	\begin{equation}\label{8}
	\dfrac{1}{2(2H+1)}=\sum_{i=1}^{n}\lambda_{i}\int_{0}^{1}\int_{0}^{u_{3}}\int_{0}^{u_{2}}du_{1}d\omega_{i,u_{2}}d\omega_{i,u_{3}}\Rightarrow \sum_{i=1}^{n}\lambda_{i}\int_{0}^{1}\int_{0}^{u_{3}}u_{2}d\omega_{i,u_{2}}d\omega_{i,u_{3}}=\dfrac{1}{2(2H+1)}
	\end{equation}
	Fifth,
	\begin{equation}\label{9}
	\dfrac{1}{8}=\sum_{i=1}^{n}\lambda_{i}\int_{0}^{1}\int_{0}^{u_{4}}\int_{0}^{u_{3}}\int_{0}^{u_{2}}d\omega_{i,u_{1}}d\omega_{i,u_{2}}d\omega_{i,u_{3}}d\omega_{i,u_{4}}\Rightarrow \dfrac{1}{4!}\sum_{i=1}^{n}\lambda_{i}\omega_{i,1}^{4}=\dfrac{1}{8}\Rightarrow
	\sum_{i=1}^{n}\lambda_{i}\omega_{i,1}^{4}=3
	\end{equation}
	The other 12 equations are automatically zero by the symmetric properties of the $\omega_{1,s}$ and $\omega_{,s2}$, and by the fact that $\omega_{3,s}=0$, since the 12 equations involve odd integrals of the $\omega$s.
	\\
	Now we need to have maximum 5 unknowns in order to solve the system of equations. We have actually have 6 equations since the sum of the weights $\sum_{i=1}^{3}\lambda_{i}=1$, because we are considering a probability measure. Assume that
	\begin{equation*}
	\omega_{1,s}=
	\begin{cases}
	as\qquad for \qquad s\in[0,\frac{1}{3}],\\
	b_{1}s+b_{0}\qquad for \qquad s\in[\frac{1}{3},\frac{2}{3}],\\
	c_{1}s+c_{0}\qquad for \qquad s\in[\frac{2}{3},1].\\
	\end{cases}
	\end{equation*}
	With this formulation we have 5 unknowns which are $a$, $b_{1}$, $c_{1}$, $\lambda_{1}$ and $\lambda_{3}$. The reason why $b_{0}$ and $c_{0}$ are not unknowns is because they have to take certain values in order to make the path $\omega_{1,s}$ continuous. Notice that $\lambda_{2}$ is not an unknowns since $\lambda_{2}=\lambda_{1}$. We have 5 unknowns for 6 equations there is a risk that the system cannot be solved. However, we hope that two of the 5 equations are the same. An alternative approach is to let the points where the slope changes, which we fixed to be at $\frac{1}{3}$ and $\frac{2}{3}$, be two unknowns.
	\\
	Let us now solve the system. Consider equation $(\ref{2})$, we have
	\begin{equation*}
	\lambda_{1}\omega_{1,1}^{2}+\lambda_{2}\omega_{2,1}^{2}=1\Rightarrow 2\lambda_{1}\omega_{1,1}^{2}=1\Rightarrow 2\lambda_{1}(c_{1}+c_{0})^{2}=1
	\end{equation*}
	Consider equation $(\ref{6})$, we have
	\begin{equation*}
	2\lambda_{1}\int_{0}^{1}\omega_{1,u_{3}}^{2}du_{3}=\dfrac{1}{2H+1}\Rightarrow \int_{0}^{\frac{1}{3}}a^{2}u_{3}^{2}du_{3}+\int_{\frac{1}{3}}^{\frac{2}{3}}(b_{1}u_{3}+b_{0})^{2}du_{3}+\int_{\frac{2}{3}}^{1}(c_{1}u_{3}+c_{0})^{2}du_{3}=\dfrac{1}{2\lambda_{1}(2H+1)}
	\end{equation*}
	\begin{equation*}
	\Rightarrow \dfrac{a^{2}}{81}+\dfrac{1}{3b_{1}}\left(\dfrac{8}{27}b_{1}^{3}+b_{0}^{3}+\dfrac{4}{3}b_{1}^{2}b_{0}+2b_{1}b_{0}^{2}\right)-\dfrac{1}{3b_{1}}\left(\dfrac{1}{27}b_{1}^{3}+b_{0}^{3}+\dfrac{1}{3}b_{1}^{2}b_{0}+b_{1}b_{0}^{2}\right)
	\end{equation*}
	\begin{equation*}
	+\dfrac{1}{3c_{1}}\left(c_{1}^{3}+c_{0}^{3}+3c_{1}^{2}c_{0}+3c_{1}c_{0}^{2}\right)-\dfrac{1}{3c_{1}}\left(\dfrac{8}{27}c_{1}^{3}+c_{0}^{3}+\dfrac{4}{3}c_{1}^{2}c_{0}+2c_{1}c_{0}^{2}\right)=\dfrac{1}{2\lambda_{1}(2H+1)}
	\end{equation*}
	\begin{equation}\label{third}
	\Rightarrow \dfrac{a^{2}}{81}+\dfrac{1}{3}\left(\dfrac{7}{27}b_{1}^{2}+b_{1}b_{0}+b_{0}^{2}\right)+\dfrac{1}{3}\left(\dfrac{19}{27}c_{1}^{2}+\dfrac{5}{3}c_{1}c_{0}+c_{0}^{2}\right)=\dfrac{1}{2\lambda_{1}(2H+1)}
	\end{equation}
	Consider now equation $(\ref{8})$, we have
	\begin{equation*}
	\int_{0}^{1}\int_{0}^{u_{3}}\int_{0}^{u_{2}}du_{1}d\omega_{1,u_{2}}d\omega_{1,u_{3}}=\dfrac{1}{4\lambda_{1}(2H+1)}
	\end{equation*}
	By Fubini's theorem we have
	\begin{equation*}
	\int_{0}^{1}\int_{0}^{u_{3}}\int_{u_{1}}^{u_{3}}d\omega_{1,u_{2}}du_{1}d\omega_{1,u_{3}}=\dfrac{1}{4\lambda_{1}(2H+1)}\Rightarrow\int_{0}^{1}\int_{0}^{u_{3}}(\omega_{1,u_{3}}-\omega_{1,u_{1}})du_{1}d\omega_{1,u_{3}}=\dfrac{1}{4\lambda_{1}(2H+1)}
	\end{equation*}
	\begin{equation*}
	\Rightarrow\int_{0}^{1}\int_{u_{1}}^{1}(\omega_{1,u_{3}}-\omega_{1,u_{1}})d\omega_{1,u_{3}}du_{1}=\dfrac{1}{4\lambda_{1}(2H+1)}\Rightarrow\int_{0}^{1}\dfrac{\omega_{1,1}^{2}}{2}-\dfrac{\omega_{1,u_{1}}^{2}}{2}-\omega_{1,u_{1}}(\omega_{1,1}-\omega_{1,u_{1}})du_{1}=\dfrac{1}{4\lambda_{1}(2H+1)}
	\end{equation*}
	\begin{equation*}
	\Rightarrow \int_{0}^{1}\dfrac{\omega_{1,1}^{2}}{2}+\dfrac{\omega_{1,u_{1}}^{2}}{2}-\omega_{1,1}\omega_{1,u_{1}}du_{1}
	=\dfrac{1}{4\lambda_{1}(2H+1)}
	\end{equation*}
	\begin{equation*}
	\Rightarrow (c_{1}+c_{0})^{2}-2(c_{1}+c_{0})\left[\int_{0}^{\frac{1}{3}}au_{1}du_{1}+\int_{\frac{1}{3}}^{\frac{2}{3}}b_{1}u_{1}+b_{0}du_{1}+\int_{\frac{2}{3}}^{1}c_{1}u_{1}+c_{0}du_{1}\right]
	\end{equation*}
	\begin{equation*}
	+\int_{0}^{\frac{1}{3}}a^{2}u_{1}^{2}du_{1}+\int_{\frac{1}{3}}^{\frac{2}{3}}(b_{1}u_{1}+b_{0})^{2}du_{1}+\int_{\frac{2}{3}}^{1}(c_{1}u_{1}+c_{0})^{2}du_{1}=\dfrac{1}{2\lambda_{1}(2H+1)}
	\end{equation*}
	\begin{equation*}
	\Rightarrow (c_{1}+c_{0})^{2}-2(c_{1}+c_{0})\left[a\dfrac{1}{18}+b_{1}\dfrac{1}{6}+b_{0}\dfrac{1}{3}+c_{1}\dfrac{5}{18}+c_{0}\dfrac{1}{3}\right]
	\end{equation*}
	\begin{equation*}
	+\dfrac{a^{2}}{81}+\dfrac{1}{3}\left(\dfrac{7}{27}b_{1}^{2}+b_{1}b_{0}+b_{0}^{2}\right)+\dfrac{1}{3}\left(\dfrac{19}{27}c_{1}^{2}+\dfrac{5}{3}c_{1}c_{0}+c_{0}^{2}\right)=\dfrac{1}{2\lambda_{1}(2H+1)}
	\end{equation*}
	\begin{equation*}
	\Rightarrow\dfrac{a^{2}}{81}-2(c_{1}+c_{0})\left(a\dfrac{1}{18}+b_{1}\dfrac{1}{6}+b_{0}\dfrac{1}{3}\right)+\dfrac{1}{3}\left(\dfrac{7}{27}b_{1}^{2}+b_{1}b_{0}+b_{0}^{2}\right)
	\end{equation*}
	\begin{equation*}
	+c_{1}^{2}\left(1-\dfrac{5}{9}+\dfrac{19}{81}\right)+c_{0}^{2}\left(1-\dfrac{2}{3}+\dfrac{1}{3}\right)+c_{1}c_{0}\left(2-\dfrac{2}{3}-\dfrac{5}{9}+\dfrac{5}{9}\right)=\dfrac{1}{2\lambda_{1}(2H+1)}
	\end{equation*}
	\begin{equation*}
	\Rightarrow\dfrac{a^{2}}{81}-2(c_{1}+c_{0})\left(a\dfrac{1}{18}+b_{1}\dfrac{1}{6}+b_{0}\dfrac{1}{3}\right)+\dfrac{1}{3}\left(\dfrac{7}{27}b_{1}^{2}+b_{1}b_{0}+b_{0}^{2}\right)+\dfrac{55c_{1}^{2}}{81}+\dfrac{2c_{0}^{2}}{3}+\dfrac{4c_{1}c_{0}}{3}=\dfrac{1}{2\lambda_{1}(2H+1)}
	\end{equation*}
	Consider now equation $(\ref{7})$, we have
	\begin{equation*}
	\int_{0}^{1}\int_{0}^{u_{3}}\omega_{1,u_{2}}du_{2}d\omega_{1,u_{3}}=\dfrac{2H-1}{4\lambda_{1}(2H+1)}
	\end{equation*}
	By Fubini's theorem we have
	\begin{equation*}
	\int_{0}^{1}\int_{u_{2}}^{1}\omega_{1,u_{2}}d\omega_{1,u_{3}}du_{2}=\dfrac{2H-1}{4\lambda_{1}(2H+1)}\Rightarrow\int_{0}^{1}\omega_{1,u_{2}}(\omega_{1,1}-\omega_{1,u_{2}})du_{2}=\dfrac{2H-1}{4\lambda_{1}(2H+1)}
	\end{equation*}
	\begin{equation*}
	\Rightarrow (c_{1}+c_{0})\left[\int_{0}^{\frac{1}{3}}au_{1}du_{1}+\int_{\frac{1}{3}}^{\frac{2}{3}}b_{1}u_{1}+b_{0}du_{1}+\int_{\frac{2}{3}}^{1}c_{1}u_{1}+c_{0}du_{1}\right]
	\end{equation*}
	\begin{equation*}
	-\int_{0}^{\frac{1}{3}}a^{2}u_{1}^{2}du_{1}-\int_{\frac{1}{3}}^{\frac{2}{3}}(b_{1}u_{1}+b_{0})^{2}du_{1}-\int_{\frac{2}{3}}^{1}(c_{1}u_{1}+c_{0})^{2}du_{1}=\dfrac{2H-1}{4\lambda_{1}(2H+1)}
	\end{equation*}
	\begin{equation*}
	\Rightarrow (c_{1}+c_{0})\left[a\dfrac{1}{18}+b_{1}\dfrac{1}{6}+b_{0}\dfrac{1}{3}+c_{1}\dfrac{5}{18}+c_{0}\dfrac{1}{3}\right]
	\end{equation*}
	\begin{equation*}
	-\dfrac{a^{2}}{81}-\dfrac{1}{3}\left(\dfrac{7}{27}b_{1}^{2}+b_{1}b_{0}+b_{0}^{2}\right)-\dfrac{1}{3}\left(\dfrac{19}{27}c_{1}^{2}+\dfrac{5}{3}c_{1}c_{0}+c_{0}^{2}\right)=\dfrac{2H-1}{4\lambda_{1}(2H+1)}
	\end{equation*}
	\begin{equation*}
	\Rightarrow(c_{1}+c_{0})\left(a\dfrac{1}{18}+b_{1}\dfrac{1}{6}+b_{0}\dfrac{1}{3}\right)-\dfrac{a^{2}}{81}-\dfrac{1}{3}\left(\dfrac{7}{27}b_{1}^{2}+b_{1}b_{0}+b_{0}^{2}\right)
	\end{equation*}
	\begin{equation*}
	+c_{1}^{2}\left(\dfrac{5}{18}-\dfrac{19}{81}\right)+c_{0}^{2}\left(\dfrac{1}{3}-\dfrac{1}{3}\right)+c_{1}c_{0}\left(\dfrac{1}{3}+\dfrac{5}{18}-\dfrac{5}{9}\right)=\dfrac{2H-1}{4\lambda_{1}(2H+1)}
	\end{equation*}
	\begin{equation*}
	\Rightarrow(c_{1}+c_{0})\left(a\dfrac{1}{18}+b_{1}\dfrac{1}{6}+b_{0}\dfrac{1}{3}\right)-\dfrac{a^{2}}{81}-\dfrac{1}{3}\left(\dfrac{7}{27}b_{1}^{2}+b_{1}b_{0}+b_{0}^{2}\right)+\dfrac{7c_{1}^{2}}{162}+\dfrac{c_{1}c_{0}}{18}=\dfrac{2H-1}{4\lambda_{1}(2H+1)}
	\end{equation*}
	Consider now equation $(\ref{9})$, we have
	\begin{equation*}
	\lambda_{1}\omega_{1,1}^{4}+\lambda_{2}\omega_{2,1}^{4}=3\Rightarrow 2\lambda_{1}\omega_{1,1}^{4}=3\Rightarrow \lambda_{1}(c_{1}+c_{0})^{4}=\dfrac{3}{2}
	\end{equation*}
	Therefore, we have the following system of equations
	\begin{equation}\label{system}
	\begin{cases}
	2\lambda_{1}+\lambda_{3}=1 \qquad with \qquad \lambda_{1},\lambda_{1}\in[0,1],\\
	2\lambda_{1}(c_{1}+c_{0})^{2}=1,\\
	\dfrac{a^{2}}{81}+\dfrac{1}{3}\left(\dfrac{7}{27}b_{1}^{2}+b_{1}b_{0}+b_{0}^{2}\right)+\dfrac{1}{3}\left(\dfrac{19}{27}c_{1}^{2}+\dfrac{5}{3}c_{1}c_{0}+c_{0}^{2}\right)=\dfrac{1}{2\lambda_{1}(2H+1)},\\
	\dfrac{a^{2}}{81}-2(c_{1}+c_{0})\left(a\dfrac{1}{18}+b_{1}\dfrac{1}{6}+b_{0}\dfrac{1}{3}\right)+\dfrac{1}{3}\left(\dfrac{7}{27}b_{1}^{2}+b_{1}b_{0}+b_{0}^{2}\right)+\dfrac{55c_{1}^{2}}{81}+\dfrac{2c_{0}^{2}}{3}+\dfrac{4c_{1}c_{0}}{3}=\dfrac{1}{2\lambda_{1}(2H+1)},\\
	(c_{1}+c_{0})\left(a\dfrac{1}{18}+b_{1}\dfrac{1}{6}+b_{0}\dfrac{1}{3}\right)-\dfrac{a^{2}}{81}-\dfrac{1}{3}\left(\dfrac{7}{27}b_{1}^{2}+b_{1}b_{0}+b_{0}^{2}\right)+\dfrac{7c_{1}^{2}}{162}+\dfrac{c_{1}c_{0}}{18}=\dfrac{2H-1}{4\lambda_{1}(2H+1)},\\
	\lambda_{1}(c_{1}+c_{0})^{4}=\dfrac{3}{2},
	\end{cases}
	\end{equation}
	with the following unknowns $\lambda_{1},\lambda_{3},a,b_{1},c_{1}$.
	\\\\
	Consider the two equations in the system above
	\begin{equation*}
	\lambda_{1}(c_{1}+c_{0})^{4}=\dfrac{3}{2} \qquad and \qquad 2\lambda_{1}(c_{1}+c_{0})^{2}=1
	\end{equation*}
	we have
	\begin{equation*}
	\Rightarrow \lambda_{1}\dfrac{1}{4\lambda_{1}^{2}}=\dfrac{3}{2} \Rightarrow \lambda_{1}=\dfrac{1}{6}.
	\end{equation*}
	and
	\begin{equation}\label{knew}
	\Rightarrow c_{1}+c_{0}=\sqrt{3}
	\end{equation}
	Further, using $2\lambda_{1}+\lambda_{3}=1$ we have
	\begin{equation*}
	\Rightarrow \lambda_{3}=\dfrac{2}{3}
	\end{equation*}
	Now, consider the equations
	\begin{equation*}
	\dfrac{a^{2}}{81}-2(c_{1}+c_{0})\left(a\dfrac{1}{18}+b_{1}\dfrac{1}{6}+b_{0}\dfrac{1}{3}\right)+\dfrac{1}{3}\left(\dfrac{7}{27}b_{1}^{2}+b_{1}b_{0}+b_{0}^{2}\right)+\dfrac{55c_{1}^{2}}{81}+\dfrac{2c_{0}^{2}}{3}+\dfrac{4c_{1}c_{0}}{3}=\dfrac{1}{2\lambda_{1}(2H+1)}
	\end{equation*}
	and
	\begin{equation*}
	(c_{1}+c_{0})\left(a\dfrac{1}{18}+b_{1}\dfrac{1}{6}+b_{0}\dfrac{1}{3}\right)-\dfrac{a^{2}}{81}-\dfrac{1}{3}\left(\dfrac{7}{27}b_{1}^{2}+b_{1}b_{0}+b_{0}^{2}\right)+\dfrac{7c_{1}^{2}}{162}+\dfrac{c_{1}c_{0}}{18}=\dfrac{2H-1}{4\lambda_{1}(2H+1)}.
	\end{equation*}
	By summing them, we have
	\begin{equation*}
	-(c_{1}+c_{0})\left(a\dfrac{1}{18}+b_{1}\dfrac{1}{6}+b_{0}\dfrac{1}{3}\right)+\dfrac{7c_{1}^{2}}{162}+\dfrac{c_{1}c_{0}}{18}+\dfrac{55c_{1}^{2}}{81}+\dfrac{2c_{0}^{2}}{3}+\dfrac{4c_{1}c_{0}}{3}=\dfrac{3}{2}.
	\end{equation*}
	\begin{equation}\label{system1}
	\Rightarrow-(c_{1}+c_{0})\left(a\dfrac{1}{18}+b_{1}\dfrac{1}{6}+b_{0}\dfrac{1}{3}\right)+\dfrac{13c_{1}^{2}}{18}+\dfrac{25c_{1}c_{0}}{18}+\dfrac{2c_{0}^{2}}{3}=\dfrac{3}{2}.
	\end{equation}
	Further, by taking the difference of the two equations
	\begin{equation*}
	\dfrac{a^{2}}{81}+\dfrac{1}{3}\left(\dfrac{7}{27}b_{1}^{2}+b_{1}b_{0}+b_{0}^{2}\right)+\dfrac{1}{3}\left(\dfrac{19}{27}c_{1}^{2}+\dfrac{5}{3}c_{1}c_{0}+c_{0}^{2}\right)=\dfrac{1}{2\lambda_{1}(2H+1)}
	\end{equation*}
	and
	\begin{equation*}
	\dfrac{a^{2}}{81}-2(c_{1}+c_{0})\left(a\dfrac{1}{18}+b_{1}\dfrac{1}{6}+b_{0}\dfrac{1}{3}\right)+\dfrac{1}{3}\left(\dfrac{7}{27}b_{1}^{2}+b_{1}b_{0}+b_{0}^{2}\right)+\dfrac{55c_{1}^{2}}{81}+\dfrac{2c_{0}^{2}}{3}+\dfrac{4c_{1}c_{0}}{3}=\dfrac{1}{2\lambda_{1}(2H+1)}
	\end{equation*}
	we obtain
	\begin{equation*}
	2(c_{1}+c_{0})\left(a\dfrac{1}{18}+b_{1}\dfrac{1}{6}+b_{0}\dfrac{1}{3}\right)+\dfrac{1}{3}\left(\dfrac{19}{27}c_{1}^{2}+\dfrac{5}{3}c_{1}c_{0}+c_{0}^{2}\right)-\dfrac{55c_{1}^{2}}{81}-\dfrac{2c_{0}^{2}}{3}-\dfrac{4c_{1}c_{0}}{3}=0
	\end{equation*}
	\begin{equation}\label{system2}
	\Rightarrow2(c_{1}+c_{0})\left(a\dfrac{1}{18}+b_{1}\dfrac{1}{6}+b_{0}\dfrac{1}{3}\right)-\dfrac{4c_{1}^{2}}{9}-\dfrac{c_{0}^{2}}{3}-\dfrac{7c_{1}c_{0}}{9}=0
	\end{equation}
	Using equations $(\ref{system1})$ and $(\ref{system2})$ we have
	\begin{equation*}
	\dfrac{13c_{1}^{2}}{9}+\dfrac{25c_{1}c_{0}}{9}+\dfrac{4c_{0}^{2}}{3}-\dfrac{4c_{1}^{2}}{9}-\dfrac{c_{0}^{2}}{3}-\dfrac{7c_{1}c_{0}}{9}=3
	\end{equation*}
	\begin{equation*}
	\Rightarrow c_{1}^{2}+2c_{1}c_{0}+c_{0}^{2}=3 \Rightarrow c_{1}+c_{0}=\sqrt{3}.
	\end{equation*}
	We already knew this information from equation $(\ref{knew})$, but we did not use it here. This means that two of our constraints are in fact only one. Hence, we have one constraint less than expected, meaning that we are able to find a solution of our system since now the number of equations is the same as the number of unknowns.\\
	Recall that there are two underlying equations, which comes from the continuity condition of $\omega$, which are
	\begin{equation*} 
	\dfrac{a}{3}=\dfrac{b_{1}}{3}+b_{0} \qquad and \qquad \dfrac{2b_{1}}{3}+b_{0}=\dfrac{2c_{1}}{3}+c_{0}
	\end{equation*}
	Let us continue to focus on the equation $(\ref{system1})$. Substitute $a=b_{1}+3b_{0}$, $b_{0}=\dfrac{2c_{1}}{3}+c_{0}-\dfrac{2b_{1}}{3}$ and $c_{0}=\sqrt{3}-c_{1}$. In other words, take
	\begin{equation*}
	a=b_{1}+2c_{1}+3c_{0}-2b_{1}=2c_{1}+3c_{0}-b_{1}=2c_{1}+3\sqrt{3}-3c_{1}-b_{1}=3\sqrt{3}-c_{1}-b_{1}
	\end{equation*}
	and
	\begin{equation*}
	b_{0}=\dfrac{2c_{1}}{3}+\sqrt{3}-c_{1}-\dfrac{2b_{1}}{3}=\sqrt{3}-\dfrac{c_{1}}{3}-\dfrac{2b_{1}}{3}
	\end{equation*}
	Hence, we have that equation $(\ref{system1})$ becomes
	\begin{equation*}
	-\sqrt{3}\left((3\sqrt{3}-c_{1}-b_{1})\dfrac{1}{18}+b_{1}\dfrac{1}{6}+(\sqrt{3}-\dfrac{c_{1}}{3}-\dfrac{2b_{1}}{3})\dfrac{1}{3}\right)+\dfrac{13c_{1}^{2}}{18}+\dfrac{25c_{1}(\sqrt{3}-c_{1})}{18}+\dfrac{2(\sqrt{3}-c_{1})^{2}}{3}=\dfrac{3}{2}
	\end{equation*}
	\begin{equation*}
	\Rightarrow-\sqrt{3}\left(\dfrac{\sqrt{3}}{2}-\dfrac{c_{1}}{6}-\dfrac{b_{1}}{9}\right)+2+\dfrac{c_{1}\sqrt{3}}{18}=\dfrac{3}{2}
	\end{equation*} 
	\begin{equation*}
	\Rightarrow\dfrac{b_{1}\sqrt{3}}{9}+\dfrac{2c_{1}\sqrt{3}}{9}=1\Rightarrow b_{1}+2c_{1}=3\sqrt{3}.
	\end{equation*} 
	\begin{equation*}
	\Rightarrow b_{1}=3\sqrt{3}-2c_{1}.
	\end{equation*} 
	We can now consider the third equation of our system (\textit{i.e.} equation $(\ref{third})$) and rewrite it in terms of $c_{1}$. First, let us rewrite all the unknowns that we need in terms of $c_{1}$:
	\begin{equation*}
	a=3\sqrt{3}-c_{1}-b_{1}=3\sqrt{3}-c_{1}-3\sqrt{3}+2c_{1}=c_{1}
	\end{equation*}
	and
	\begin{equation*}
	b_{0}=\sqrt{3}-\dfrac{c_{1}}{3}-\dfrac{2b_{1}}{3}=\sqrt{3}-\dfrac{c_{1}}{3}-2\sqrt{3}+\dfrac{4c_{1}}{3}=c_{1}-\sqrt{3}
	\end{equation*}
	Notice that $a=c_{1}$ and $b_{0}=-c_{0}$, which is in accordance with the results of Lyons and Victoir.\\
	We can now proceed with the substitution
	\begin{equation*}
	\dfrac{c_{1}^{2}}{81}+\dfrac{1}{3}\left(\dfrac{7}{27}(3\sqrt{3}-2c_{1})^{2}+(3\sqrt{3}-2c_{1})(c_{1}-\sqrt{3})+(c_{1}-\sqrt{3})^{2}\right)
	\end{equation*}
	\begin{equation*}
	+\dfrac{1}{3}\left(\dfrac{19}{27}c_{1}^{2}+\dfrac{5}{3}c_{1}(\sqrt{3}-c_{1})+(\sqrt{3}-c_{1})^{2}\right)=\dfrac{1}{2\lambda_{1}(2H+1)}
	\end{equation*}
	\begin{equation*}
	\Rightarrow \dfrac{c_{1}^{2}}{81}+\dfrac{1}{3}\left(7+\dfrac{28c_{1}^{2}}{27}-\dfrac{28c_{1}\sqrt{3}}{9}+3c_{1}\sqrt{3}-9-2c_{1}^{2}+2c_{1}\sqrt{3}+c_{1}^{2}+3-2c_{1}\sqrt{3}\right)
	\end{equation*}
	\begin{equation*}
	+\dfrac{1}{3}\left(\dfrac{19c_{1}^{2}}{27}+\dfrac{5c_{1}\sqrt{3}}{3}-\dfrac{5c_{1}^{2}}{3}+3+c_{1}^{2}-2c_{1}\sqrt{3}\right)=\dfrac{1}{2\lambda_{1}(2H+1)}
	\end{equation*}
	\begin{equation*}
	c_{1}^{2}\dfrac{1}{27}-c_{1}\dfrac{4\sqrt{3}}{27}+\dfrac{4}{3}=\dfrac{3}{2H+1} \Rightarrow c_{1}^{2}\dfrac{1}{27}-c_{1}\dfrac{4\sqrt{3}}{27}=\dfrac{5-8H}{3(2H+1)}
	\end{equation*}
	\begin{equation*}
	\Rightarrow c_{1}=\dfrac{4H\sqrt{3}+2\sqrt{3}-\sqrt{-96H^{2}+66H+57}}{2H+1}
	\end{equation*}
	If $H=\frac{1}{2}$ then
	\begin{equation*}
	c_{1}=\sqrt{3}\left(2-\sqrt{\dfrac{11}{2}}\right)
	\end{equation*}
	which is in accordance with the results of Lyons and Victoir. 
	\\
	Therefore, we know all the unknowns. In particular, the path $\omega_{1,t}$ is given by:
	\begin{equation*}
	\begin{cases}
	\dfrac{4H\sqrt{3}+2\sqrt{3}-\sqrt{-96H^{2}+66H+57}}{2H+1}t, \qquad t\in[0,\frac{1}{3}],\\
	\dfrac{2H\sqrt{3}+\sqrt{3}-\sqrt{-96H^{2}+66H+57}}{2H+1}+\dfrac{2\sqrt{-96H^{2}+66H+57}-2H\sqrt{3}-\sqrt{3}}{2H+1}t, \qquad t\in[\frac{1}{3},\frac{2}{3}],\\
	\dfrac{\sqrt{-96H^{2}+66H+57}-2H\sqrt{3}-\sqrt{3}}{2H+1}+\dfrac{4H\sqrt{3}+2\sqrt{3}-\sqrt{-96H^{2}+66H+57}}{2H+1}t, \qquad t\in[\frac{2}{3},1],\\
	\end{cases}
	\end{equation*}
	If you let $\alpha$ and $\beta$ to be:
	\begin{equation*}
	\alpha:=\dfrac{2H\sqrt{3}+\sqrt{3}}{2H+1} \qquad and \qquad \beta:=\dfrac{\sqrt{-96H^{2}+66H+57}}{2H+1}
	\end{equation*}
	then we can rewrite our path $\omega_{1,t}$ can be written in the following form:
	\begin{equation*}
	\begin{cases}
	(2\alpha-\beta) t, \qquad t\in[0,\frac{1}{3}],\\
	(\alpha-\beta)+(2\beta-\alpha)t, \qquad t\in[\frac{1}{3},\frac{2}{3}],\\
	(\beta-\alpha)+(2\alpha-\beta)t, \qquad t\in[\frac{2}{3},1],\\
	\end{cases}
	\end{equation*} 
	Now, we proceed with a check of our result. Indeed we substitute the values obtained for our unknowns in our system of equations to check the correctness of these values.\\
	In particular, let us focus first on the equation
	\begin{equation*}
	\dfrac{a^{2}}{81}-2(c_{1}+c_{0})\left(a\dfrac{1}{18}+b_{1}\dfrac{1}{6}+b_{0}\dfrac{1}{3}\right)+\dfrac{1}{3}\left(\dfrac{7}{27}b_{1}^{2}+b_{1}b_{0}+b_{0}^{2}\right)+\dfrac{55c_{1}^{2}}{81}+\dfrac{2c_{0}^{2}}{3}+\dfrac{4c_{1}c_{0}}{3}=\dfrac{1}{2\lambda_{1}(2H+1)}
	\end{equation*}
	Let us rewrite it in terms of $c_{1}$:
	\begin{equation*}
	\dfrac{c_{1}^{2}}{81}-2\sqrt{3}\left(c_{1}\dfrac{1}{18}+(3\sqrt{3}-2c_{1})\dfrac{1}{6}+(c_{1}-\sqrt{3})\dfrac{1}{3}\right)+
	\end{equation*}
	\begin{equation*}
	+\dfrac{1}{3}\left(\dfrac{7}{27}(3\sqrt{3}-2c_{1})^{2}+(3\sqrt{3}-2c_{1})(c_{1}-\sqrt{3})+(c_{1}-\sqrt{3})^{2}\right)
	\end{equation*}
	\begin{equation*}
	+\dfrac{55c_{1}^{2}}{81}+\dfrac{2(\sqrt{3}-c_{1})^{2}}{3}+\dfrac{4c_{1}(\sqrt{3}-c_{1})}{3}=\dfrac{3}{2H+1}
	\end{equation*}
	\begin{equation*}
	\Rightarrow c_{1}^{2}\dfrac{1}{27}-c_{1}\dfrac{4\sqrt{3}}{27}+\dfrac{4}{3}=\dfrac{3}{2H+1} 
	\end{equation*}
	as before. Now, let us focus on the equation
	\begin{equation*}
	(c_{1}+c_{0})\left(a\dfrac{1}{18}+b_{1}\dfrac{1}{6}+b_{0}\dfrac{1}{3}\right)-\dfrac{a^{2}}{81}-\dfrac{1}{3}\left(\dfrac{7}{27}b_{1}^{2}+b_{1}b_{0}+b_{0}^{2}\right)+\dfrac{7c_{1}^{2}}{162}+\dfrac{c_{1}c_{0}}{18}=\dfrac{2H-1}{4\lambda_{1}(2H+1)}
	\end{equation*}
	Following the same procedure we have
	\begin{equation*}
	\sqrt{3}\left(c_{1}\dfrac{1}{18}+(3\sqrt{3}-2c_{1})\dfrac{1}{6}+(c_{1}-\sqrt{3})\dfrac{1}{3}\right)-\dfrac{c_{1}^{2}}{81}
	\end{equation*}
	\begin{equation*}
	-\dfrac{1}{3}\left(\dfrac{7}{27}(3\sqrt{3}-2c_{1})^{2}+(3\sqrt{3}-2c_{1})(c_{1}-\sqrt{3})+(c_{1}-\sqrt{3})^{2}\right)
	\end{equation*}
	\begin{equation*}
	+\dfrac{7c_{1}^{2}}{162}+\dfrac{c_{1}(\sqrt{3}-c_{1})}{18}=\dfrac{3(2H-1)}{2(2H+1)}
	\end{equation*}
	\begin{equation*}
	\Rightarrow -c_{1}^{2}\dfrac{1}{27}+c_{1}\dfrac{4\sqrt{3}}{27}+\dfrac{1}{6}=\dfrac{3(2H-1)}{2(2H+1)} \Rightarrow c_{1}^{2}\dfrac{1}{27}-c_{1}\dfrac{4\sqrt{3}}{27}=\dfrac{5-8H}{3(2H+1)}.
	\end{equation*}
	as before. The other equations are straightforward to check. Therefore, our solution is consistent and when $H=\frac{1}{2}$ our solution is the same solution obtained by Lyons and Victoir in their paper.
\end{proof}
\small

\end{document}